\documentclass[a4paper,11pt]{amsart}

\usepackage[utf8]{inputenc}
\usepackage[english]{babel}
\usepackage{amsfonts}
\usepackage{amsmath}
\usepackage{mathrsfs}
\usepackage{amssymb}
\usepackage{amsthm}
\usepackage{amscd}
\usepackage{latexsym}
\usepackage{amstext}
\usepackage{amsxtra}
\usepackage[alphabetic,backrefs,lite]{amsrefs} % for bibliography
\usepackage{color}
\usepackage{paralist}
\usepackage[colorinlistoftodos]{todonotes}
\usepackage[all]{xy}

\usepackage{enumerate}
\usepackage{multirow}
\usepackage{wasysym}
\usepackage{latexsym} % for \Box
\usepackage{comment}
\usepackage{colonequals}

\usepackage[colorinlistoftodos]{todonotes}
% \usepackage{lineno}
% \linenumbers

\usepackage{geometry}
\geometry{top=3cm,bottom=3cm,left=2cm,right=2cm}
\linespread{1.2}

\usepackage[
        draft=false,
        colorlinks, citecolor=darkgreen,
        backref=page,
        pdfauthor={F. F. Favale, G. P. Pirola, D. Bricalli},
        pdftitle={GordanNoether},
        linktocpage        
]{hyperref}
\hypersetup{citecolor=blue,linktocpage}

\newtheorem*{thA}{Theorem A}
\newtheorem*{thB}{Theorem B}
\newtheorem*{thC}{Theorem C}

\newtheorem{theorem}{Theorem}[section]
\newtheorem*{theorem*}{Theorem}
\newtheorem{lemma}[theorem]{Lemma}
\newtheorem{corollary}[theorem]{Corollary}
\newtheorem{proposition}[theorem]{Proposition}
\newtheorem{remark}[theorem]{Remark}
\newtheorem{definition}[theorem]{Definition}
\newtheorem{example}[theorem]{Example}

%%%%%%%%%%%%%%%%%%%%%%%%%%%%%%%%%%%%%%%%%%%%%%%%%%%%%%%%%%%%%%%%%%%%%%%%%%%%%

\newcommand{\nc}{\newcommand} 
\nc{\cH}{{\mathcal H}}
\nc{\cA}{{\mathcal A}}
\nc{\cG}{{\mathcal G}}
\nc{\cC}{{\mathcal C}}
\nc{\cO}{{\mathcal O}}
\nc{\cI}{{\mathcal I}}
\nc{\cB}{{\mathcal B}}
\nc{\cY}{{\mathcal Y}}
\nc{\cK}{{\mathcal K}} 
\nc{\cX}{{\mathcal X}}
\nc{\cS}{{\mathcal S}}
\nc{\cE}{{\mathcal E}}
\nc{\cF}{{\mathcal F}}
\nc{\cZ}{{\mathcal Z}}
\nc{\cQ}{{\mathcal Q}}
\nc{\cN}{{\mathcal N}}
\nc{\cP}{{\mathcal P}}
\nc{\cL}{{\mathcal L}}
\nc{\cM}{{\mathcal M}}
\nc{\cT}{{\mathcal T}}
\nc{\cW}{{\mathcal W}}
\nc{\cU}{{\mathcal U}}
\nc{\cJ}{{\mathcal J}}
\nc{\cV}{{\mathcal V}}
\nc{\bH}{{\mathbb H}}
\nc{\bA}{{\mathbb A}}
\nc{\bG}{{\mathbb G}}
\nc{\bC}{{\mathbb C}}
\nc{\bO}{{\mathbb O}}
\nc{\bI}{{\mathbb I}}
\nc{\bB}{{\mathbb B}}
\nc{\bY}{{\mathbb Y}}
\nc{\bK}{{\mathbb K}} 
\nc{\bX}{{\mathbb X}}
\nc{\bS}{{\mathbb S}}
\nc{\bE}{{\mathbb E}}
\nc{\bF}{{\mathbb F}}
\nc{\bZ}{{\mathbb Z}}
\nc{\bQ}{{\mathbb Q}}
\nc{\bN}{{\mathbb N}}
\nc{\bP}{{\mathbb P}}
\nc{\bL}{{\mathbb L}}
\nc{\bM}{{\mathbb M}}
\nc{\bT}{{\mathbb T}}
\nc{\bW}{{\mathbb W}}
\nc{\bU}{{\mathbb U}}
\nc{\bD}{{\mathbb D}}
\nc{\bJ}{{\mathbb J}}
\nc{\bV}{{\mathbb V}}
\nc{\bbZ}{{\mathbb Z}}
\nc{\bR}{{\mathbb R}}
\nc{\fr}{{\rightarrow}}
\nc{\co}{{\nabla}}

\nc{\cu}{{\barline{\nabla}}}

\DeclareMathOperator{\Sec}{Sec}
\DeclareMathOperator{\Ann}{Ann}
\DeclareMathOperator{\Hess}{Hess}
\DeclareMathOperator{\hess}{hess}

%%%%%%%%%%%%%%%%%%%%%%%%%%%%%%%%%%%%%%%%%%%%%%%%%%%%%%%%%%%%%%%%%%%%%%%%%%%%%
%%%%%%%%%%%%%%%%%%%%%%%%%%%%%%%%%%%%%%%%%%%%%%%%%%%%%%%%%%%%%%%%%%%%%%%%%%%%%
\title[A theorem of Gordan and Noether via Gorenstein Rings]{A theorem of Gordan and Noether via Gorenstein Rings}

\author{Davide Bricalli}
% \address{Dipartimento di Matematica e Applicazioni,
% 	Universit\`a degli Studi di Milano-Bicocca,
% 	Via Roberto Cozzi, 55,
% 	I-20125 Milano, Italy}
\address{Dipartimento di Matematica,
	Universit\`a degli Studi di Pavia,
	Via Ferrata, 5
	I-27100 Pavia, Italy}
\email{d.bricalli1@campus.unimib.it}

\author{Filippo Francesco Favale}
\address{Dipartimento di Matematica,
	Universit\`a degli Studi di Pavia,
	Via Ferrata, 5
	I-27100 Pavia, Italy}
\email{filippo.favale@unipv.it}

\author{Gian Pietro Pirola}
\address{Dipartimento di Matematica,
	Universit\`a degli Studi di Pavia,
	Via Ferrata, 5
	I-27100 Pavia, Italy}
\email{gianpietro.pirola@unipv.it}

\date{\today}

\thanks{
\textit{2010 Mathematics Subject Classification}: Primary:  14J70; Secondary: 14J30, 13E10, 14A25, 14A05\\
\textit{Keywords}: Artinian Gorenstein algebras, Cubic threefolds, Gordan-Noether, Jacobian rings, Lefschetz properties }

%%%%%%%%%%%%%%%%%%%%%%%%%%%%%%%%%%%%%%%%%%%%%%%%%%%%%%%%%%%%%%%%%%%%%%%%%%%%%
%%%%%%%%%%%%%%%%%%%%%%%%%%%%%%%%%%%%%%%%%%%%%%%%%%%%%%%%%%%%%%%%%%%%%%%%%%%%%

% ---------------------------------------------------------------------
% ---------------------------------------------------------------------
% ---------------------------------------------------------------------
% ---------------------------------------------------------------------
% ---------------------------------------------------------------------
% ---------------------------------------------------------------------
% ---------------------------------------------------------------------
% ---------------------------------------------------------------------
% ---------------------------------------------------------------------
% ---------------------------------------------------------------------

\begin{document}

\maketitle

%%%%%%%%%%%%%%%%%%%%%%%%%%%%%%%%%%%%%%%%%%%%%%%%%%%%%%%%%%%%%%%%%%%%%%%%%%%%%%%%%%%%%%%%%%%%%%%%%%%%%%%%%%%%%%%%%%%%%%%%%%%%%%%%%%%%%%%%%%%%%%%%%%%%%%%%%%%%%%%%%%%%%%%%%%%%%%%%%%%%%%%%%%%%%%%%%%%%%%%%%%%%%%%%%%%%%%%%%%%%%%%%%%%%%%%%%%%%%%%%%%%%%%%%%%%%%%%%%%%%%%%%%%%%%%%%%%%%%%%%%%%%%%%%%%%%%%%%%%%%%%%%%%%%%%%%%%%%%%%%%%%%%%%%%%%%%%%%%%%%%%%%%%%%%%%%%%%%%%%%%%%%%%%%%%%%%%%%%%%%%%%%%%%%%%%%%%%%%%%%%%%%%%%%%%%%%%%%%%%%%%%%%%%%%%%%%%%%%%%%%%%%%%%%%%%%%%%%%%%%%%%%%%%%%%%%%%%%%%%%%%%%%%%%%%%%%%%%%%%%%%%%%%%%%%%%%%%%%%%%%%%%%%%%

%\newpage

\begin{abstract}
Gordan and Noether proved in their fundamental theorem that an hypersurface $X=V(F)\subseteq \bP^n$ with $n\leq 3$ is a cone if and only if $F$ has vanishing hessian (i.e. the determinant of the Hessian matrix). They also showed that the statement is false if $n\geq 4$, by giving some counterexamples. 
Since their proof, several others have been proposed in the literature. In this paper we give a new one by using a different perspective which involves the study of standard Artinian Gorenstein $\bK$-algebras and the Lefschetz properties. As a further application of our setting, we prove that a standard Artinian Gorenstein algebra $R=\bK[x_0,\dots,x_4]/J$ with $J$ generated by a regular sequence of quadrics has the strong Lefschetz property. In particular, this holds for Jacobian rings associated to smooth cubic threefolds.
\end{abstract}

\section*{Introduction}
In a fundamental memoir of 1876, Gordan and Noether \cite{GN} fixed the statement of Hesse ({\cites{Hes51,Hes59}}) by showing that a complex projective hypersurface $V(F)\subset \bP^n$ with $n\leq 3$ is a cone if and only if the determinant of the Hessian of $F$ is zero. 
They also provided counterexamples for $n>3$. We can state the relevant part of this result as follows:

\begin{thA} 
Let $X=V(F)\subset \bP^n$ be a hypersurface defined over a field $\bK$ of characteristic $0$. 
Let $\Hess(F)$ be the hessian matrix of $F$ and assume $\hess(F)=\det(\Hess((F))\equiv 0$. 
Then, if $n\leq 3$, $X$ is a cone.
\end{thA}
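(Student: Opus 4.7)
The plan is to translate both the vanishing of $\hess(F)$ and the cone condition into properties of the standard graded Artinian Gorenstein $\bK$-algebra naturally associated to $F$ via apolarity. Writing $R=\bK[\partial_0,\dots,\partial_n]$ acting on $\bK[x_0,\dots,x_n]$ by differentiation, I set $A_F=R/\Ann_R(F)$: this is a standard Artinian Gorenstein $\bK$-algebra with socle in degree $d=\deg F$. A direct check, using Euler's identity together with the apolar description of the ideal $\Ann_R(F)$, shows that $X=V(F)$ is a cone if and only if the embedding dimension of $A_F$ is strictly less than $n+1$, i.e.\ $\dim_\bK (A_F)_1 \leq n$. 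Thus the theorem becomes a purely algebraic statement about small-codimension Artinian Gorenstein algebras.

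The bridge linking $\hess(F)$ to $A_F$ is the classical \emph{Hessian criterion}: for a general $\ell\in (A_F)_1$, the multiplication map $\mu_\ell\colon (A_F)_1 \to (A_F)_{d-1}$ defined by $h\mapsto h\cdot \ell^{d-2}$ is an isomorphism if and only if $\hess(F)\not\equiv 0$. By Gorenstein duality the two vector spaces have the same dimension, so failure of the isomorphism is equivalent to $\mu_\ell$ having a nontrivial kernel. Assuming, for contradiction, that $\hess(F)\equiv 0$ while $X$ is not a cone, I therefore have $\dim_\bK (A_F)_1 = n+1$ together with, for every $\ell\in (A_F)_1$, a nonzero $h_\ell \in (A_F)_1$ with $\ell^{d-2}\cdot h_\ell=0$ in $A_F$.

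For $n\leq 2$, the algebra $A_F$ has embedding dimension at most $3$, and one can invoke the classical fact that in characteristic zero every standard Artinian Gorenstein $\bK$-algebra of codimension at most $3$ enjoys the weak Lefschetz property. This forces $\mu_\ell$ to have full rank for a general $\ell$, contradicting the Hessian criterion unless the embedding dimension drops; hence $X$ must be a cone in these cases.

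The core of the argument, and the main obstacle, is the case $n=3$, where codimension-$4$ Gorenstein algebras do not satisfy Lefschetz properties in general. Here the plan is to exploit the apolar origin of $A_F$ and study the incidence correspondence
\[
\cI=\{(\ell,h)\in \bP((A_F)_1)\times \bP((A_F)_1)\ :\ \ell^{d-2}\cdot h = 0 \text{ in } A_F\}
\]
together with its two projections onto $\bP((A_F)_1)\simeq \bP^3$. The assumption $\hess(F)\equiv 0$ makes the first projection surjective with positive-dimensional fibres, while the Gorenstein pairing $(A_F)_1\times (A_F)_{d-1}\to \bK$ turns the orthogonal of $h\cdot (A_F)_{d-2}$ into geometric information about the polar map $[\partial_0 F:\dots:\partial_n F]$. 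Combining these with a dimension count that uses the smallness of the ambient $\bP^3$, I expect to produce a linear form in $(A_F)_1$ lying in the radical of the pairing, i.e.\ an apex of $X$. The delicate part will be controlling the components of $\cI$ and the variation of $h_\ell$ with $\ell$, replacing the classical differential/polar-map argument of Gordan and Noether by a clean manipulation inside $A_F$.
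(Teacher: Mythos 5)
Your setup faithfully reproduces the reduction that the paper itself uses: the passage to $A_F=\cQ/\Ann_{\cQ}(F)$, the equivalence ``$X$ is a cone $\iff \dim_\bK (A_F)_1\le n$'', and your ``Hessian criterion'' is exactly Lemma \ref{LEM:EQUIVHESS0}, so Theorem A becomes the statement that SAGAs of codimension at most $4$ satisfy $SLP_1$ (Theorem \ref{THM:LEFSCHETZ1}). The problem is that for the only hard case, $n=3$, you do not give a proof: after defining the incidence correspondence $\cI$ (the paper's $\Gamma_{N-2}$) you state what you ``expect to produce'' and what ``the delicate part will be''. Everything that makes the theorem true is concentrated precisely there, and in the paper it occupies all of Section \ref{SEC:2}: the Ker--Coker principle (Proposition \ref{PROP:CONE}) giving the relations $x^iy^j=0$ for $i+j=N-1$, $j\ge 1$, hence the Gorenstein--Gordan--Noether identity (Corollary \ref{COR:GGN}); the identification of $Y=\pi_2(\Gamma)$ with the projective dual $Z^*$ of the closure $Z$ of the image of the power map $[x]\mapsto[x^{N-1}]$ (Proposition \ref{PROP:DUAL}), whence $Y$ is not linear (Corollary \ref{COR:NOLINEAR}); and the resulting dimension bounds $1\le\dim Y\le n-2$ (Corollary \ref{COR:DIMY2}) together with $\dim Y=1\Rightarrow n\ge 4$ (Proposition \ref{PROP:MAININEQ}), which are jointly incompatible with $n=3$. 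None of these steps, nor substitutes for them, appear in your proposal; ``a dimension count that uses the smallness of the ambient $\bP^3$'' is a hope, not an argument.

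Two further concrete defects. First, your disposal of $n\le 2$ is also gapped: the weak Lefschetz property for codimension $\le 3$ algebras (Harima--Migliore--Nagel--Watanabe) asserts maximal rank of each map $\ell\cdot:R^i\to R^{i+1}$, while the Hessian criterion needs the single map $\ell^{d-2}\cdot:R^1\to R^{d-1}$ to be an isomorphism, i.e. $SLP_1$; a composition of maximal-rank maps need not have maximal rank, and WLP does not imply SLP in general, so this implication requires a separate argument (the paper avoids the issue by treating all $n\le 3$ uniformly with one geometric proof). Second, your intended endgame for $n=3$ --- to ``produce a linear form in $(A_F)_1$ lying in the radical of the pairing, i.e.\ an apex of $X$'' --- cannot work as stated: by Gorenstein duality the pairing $(A_F)_1\times(A_F)_{d-1}\to(A_F)_d$ is perfect, so its radical is zero no matter what $F$ is. The cone condition was already spent when you normalized $\dim_\bK(A_F)_1=n+1$, and it cannot reappear as a degeneracy internal to $A_F$; the contradiction must be of a different nature, and in the paper it is the purely numerical incompatibility of the dimension estimates above with $\dim\bP(R^1)=3$.
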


The Gordan-Noether theorem has inspired and inspires many researchers (see, for example, the recent articles \cite{CO20,DS21}) and it has been revisited many times (see \cite{Los04,GR09,Wa20}) and the excellent last chapter of the book of Francesco Russo \cite{Rus16}. Moreover, via  Macaulay's inverse systems theory \cite{Mac94}, it has a surprising application to the theory of standard Artinian Gorenstein algebras (SAGAs, in short).

To explain it (but please see Section \ref{SEC:1} for details) we recall that a  $\bK$-graded algebra $R=\oplus_{i=0}^N R^i$ is a SAGA if, for all $i$, 
$\dim_{\bK}(R^i) < +\infty$, $R^0=\bK$,
$R$ is generated in degree $1$ and it satisfies the Poincar\'e duality (that is $\dim R^N=1$ and the pairing $R^i\times R^{N-i}\to R^N$ given by the multiplication is perfect).
The codimension of an Artinian algebra $R$ is, by definition, the dimension of the vector space $R^1$ of the elements of $R$ with degree $1$. 
Roughly speaking, $R$ has the structure of the even cohomology ring of an oriented compact variety $X$ of even dimension (generated in degree $2$). 
If $X$ is a K\"ahler variety of complex dimension $m$, the Hard Lefschetz Theorem (see \cite[Theorem 6.25, page 148]{textVoi1}) states that the cup product of the $r$-th power of a K\"ahler form induces an isomorphism between $H^{m-r}(X)$ and $H^{m+r}(X)$. A natural question is whether analogous properties, which in the literature are called (weak and strong) Lefschetz properties (see Definition \ref{DEF:LefProp} for details), hold for an Artinian Gorenstein algebra $R$.

It turns out that Gordan-Noether Theorem is then equivalent to the following (see \cite{Rus16,bookLef} and Section \ref{SEC:3} below for details):

\begin{thB} 
For all standard Artinian Gorenstein $\bK$-algebras $R$ of codimension at most $4$,
there exists $x\in R^1$ such that $x^{N-2}:R^1\to R^{N-1}$ is an isomorphism, i.e. the strong Lefschetz property holds in degree $1$.
\end{thB}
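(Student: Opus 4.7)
The natural plan is to proceed via Macaulay's inverse systems, which reduce the Lefschetz statement to a Hessian non-vanishing statement about a single polynomial. By Macaulay duality, $R$ can be written as $Q/\Ann_Q(F)$, where $Q = \bK[x_0, \ldots, x_{c-1}]$ with $c = \dim_{\bK} R^1 \leq 4$, and $F$ is a homogeneous form of degree $N$ in dual variables, on which $Q$ acts by partial differentiation (apolarity). The standardness hypothesis $\dim R^1 = c$ forces $\Ann_Q(F)_1 = 0$, which is equivalent to saying that $V(F) \subseteq \bP^{c-1}$ is not a cone.

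Next, I would invoke the classical Hessian criterion for Lefschetz elements: for $L = a_0 x_0 + \cdots + a_{c-1} x_{c-1} \in R^1$, the multiplication map $\cdot L^{N-2} : R^1 \to R^{N-1}$ is an isomorphism if and only if $\hess(F)(a_0, \ldots, a_{c-1}) \neq 0$. This comes from a short calculation: identifying $R^{N-1}$ with the dual of $R^1$ through the Poincar\'e pairing, $\cdot L^{N-2}$ is encoded by the symmetric bilinear form $(x_i, x_j) \mapsto x_i x_j L^{N-2}$, whose Gram matrix equals $(N-2)!$ times the Hessian of $F$ evaluated at $(a_0, \ldots, a_{c-1})$ (via the apolarity action $\partial_L^{N-2} \partial_i \partial_j F = (N-2)! \, (\partial_i \partial_j F)(a)$). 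Consequently, the existence of a Lefschetz element in $R^1$ is equivalent to $\hess(F) \not\equiv 0$ as a polynomial in $c$ variables.

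The theorem therefore reduces to the following claim: if $F$ is a form in $c \leq 4$ variables which is not a cone, then $\hess(F) \not\equiv 0$. This is the contrapositive of Theorem A, so the Macaulay dictionary makes the equivalence of Theorems A and B transparent: a linear form in $\Ann_Q(F)$ would contradict standardness, hence $F$ being a non-cone in the sense above is automatic. To deliver the \emph{new} proof of Gordan-Noether advertised in the title, the non-vanishing has to be derived intrinsically from the Gorenstein/Lefschetz structure of $R$ rather than extracted from the classical theorem itself. The codimensions $c \leq 2$ and $c = 3$ admit elementary treatments (e.g.\ via structure theorems for low-codimension Gorenstein ideals and a direct analysis of binary/ternary Hessians), so the main obstacle is $c = 4$, which corresponds to the deepest case of Gordan-Noether (hypersurfaces in $\bP^3$). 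Here I would expect the argument to proceed by analyzing the quadratic part $\Ann_Q(F)_2$ together with the SAGA quotients $R/(\Ann_R(L^{N-2}))$ of smaller socle degree, inducting on $N$, and showing that the persistent failure of $L^{N-2}$ to be injective forces a linear syzygy in $\Ann_Q(F)$, contradicting standardness.
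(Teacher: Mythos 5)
Your proposal correctly sets up the Macaulay dictionary and the Hessian criterion (this is precisely the paper's Lemma \ref{LEM:EQUIVHESS0}: for $R=\cQ/\Ann_{\cQ}(F)$ the map $L^{N-2}\cdot:R^1\to R^{N-1}$ is an isomorphism iff $\hess(F)$ does not vanish at the corresponding point, so $SLP_1$ for $R$ is equivalent to $\hess(F)\not\equiv 0$). But this only re-proves the \emph{equivalence} of Theorems A and B, which both you and the paper acknowledge is well known. The actual mathematical content --- why the statement is \emph{true} in codimension at most $4$ --- is never established. Your reduction runs in the opposite direction to the paper's logic: the paper proves Theorem B directly, inside the Gorenstein algebra, and then \emph{deduces} Gordan--Noether from it via the dictionary; you reduce Theorem B to Gordan--Noether and then must still prove Gordan--Noether, which you only gesture at. The closing paragraph (``analyze $\Ann_Q(F)_2$, induct on $N$, show that persistent failure of $L^{N-2}$ forces a linear syzygy'') is not an argument: the claim that failure of $SLP_1$ forces a linear form in $\Ann_Q(F)$ is exactly the hard content of the theorem, asserted rather than proven, and it is far from automatic --- the Gordan--Noether/Perazzo counterexamples in $5$ variables show that any such implication must use the bound on the number of variables in an essential way, which your sketch never does.

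For comparison, the paper's proof of Theorem B is genuinely different and self-contained: assuming $SLP_1$ fails, it studies the incidence correspondence $\Gamma_{N-2}=\{([x],[y]): x^{N-2}y=0\}$ whose first projection is dominant, takes its unique dominating component $\Gamma$, and establishes (i) the ker--coker principle (Proposition \ref{PROP:CONE}): all points of $\Gamma$ satisfy $x^iy^j=0$ for $i+j=N-1$, $j\geq 1$, hence the Gorenstein--Gordan--Noether identity $(x+ty)^{N-1}=x^{N-1}$; (ii) the duality statement $Y=Z^*$ (Proposition \ref{PROP:DUAL}), where $Y=\pi_2(\Gamma)$ and $Z$ is the closure of the image of $[x]\mapsto[x^{N-1}]$, so $Y$ cannot be linear; and (iii) dimension estimates ($1\leq\dim Y\leq n-2$, and $\dim Y\leq n-3$ in the relevant degenerate situation, via an analysis of the cones $F_y$ and of $\Sec(Y)$) that are mutually contradictory when $n\leq 3$. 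None of these steps appears in your proposal, so what you have is a correct account of the standard translation plus an unexecuted plan, not a proof.
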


This equivalence is well known (it is proved in Section \ref{SEC:3} for completeness) and it follows from Macaulay's theory which allows to construct any SAGA starting from a homogeneous form in a finite number of variables (see Section \ref{SEC:1} for details or \cite[Theorem 2.71]{bookLef}).
\medskip

In this paper we reverse the logical line of the proof. We first give a direct proof of Theorem B and then we deduce Theorem A from this.  
To describe our approach we first remark that the statement of Theorem B is purely algebraic, but our proof is almost completely geometric and elementary. There are two main points that we would like to emphasize.
\medskip

The first one is the comparison between the Gorenstein duality and the projective duality theorem. This leads us to prove Proposition \ref{PROP:DUAL} that allows to treat directly the problem without introducing any auxiliar hypersurface (this was necessary in the original proof).
\medskip

The second point we want to highlight is the replacement of the famous Gordan-Noether identity. We assume that the Lefschetz property fails, that is the multiplication
by $x^{N-2}$ has non trivial kernel for $x\in R^1$ general. We construct then an incidence correspondence $\Gamma_{N-2}\subset \bP(R^1)\times \bP(R^1)$, where $\Gamma_{N-2}=\{([x],[y])\,|\, x^{N-2}y=0\}$ and it is such that its first projection is dominant. Our aim is to show that this implies that $\dim_{\bK}R^1>4$, i.e $\dim \bP(R^1)>3$.

Exploiting the differential condition that the kernel of $x^{N-2}$
must deform (we refer to this fact as the \emph{ker-coker} principle), we obtain a collection of equations for an irreducible component of $\Gamma_{N-2}$ (see Proposition \ref{PROP:CONE}). This is equivalent to what we call \emph{Gorenstein-Gordan-Noether identity} (see Corollary \ref{COR:GGN}).
\medskip

We recall that the Gordan-Noether identity is very important and it is the heart of the classical treatment of the Gordan-Noether Theorem and, as well, of all the proofs we have found in the literature.  The proof of the identity involves some delicate manipulations, and, in our opinion, as strong as it is, it appears as a cumberstone along the street of the proof. On the other hand, our Gorenstein-Gordan-Noether identity has a very elementary treatment and, as the original identity, it is a key relation for proving Theorem B. Moreover, in Subsection \ref{SUBSEC:GNI}, we show that our identity implies the relevant condition obtained from the Gordan-Noether identity when $\hess(F)=0$, used in the classical proof. 
\vspace{2mm}

We remark that the ker-coker principle has been used in \cite{FP21} for studying the Jacobian ring of a smooth plane curve in connection with the infinitesimal variation of the periods of the curve. It was in that article that we realized that these methods could be used in a more systematic way.
\bigskip

A natural question is whether these methods have more applications and in particular if they could be applied to study problems related to higher strong or weak Lefschetz properties for Gorenstein rings. As been observed for instance in \cites{MW09,Gon17}, all these properties are related to the vanishing of some higher hessians. Some higher Gorenstein-Gordan-Noether identities still hold but they give quite weaker information. 
\vspace{2mm}

The interest in the weak and strong Lefschetz properties for Artinian algebras has been developed in the last twenty years with important contributions by several authors. 
Just to mention some of them, the interested reader can see \cite{HMNW03,BK07,MN13,GZ18, Ila18, DGI20}. Much interest has been given to particular Artinian algebras, i.e. Jacobian rings of smooth hypersurfaces of degree $d$ in $\bP^n$ (see Example \ref{EX:SAGAviaREGSEQ}) over a field of characteristic $0$. 
If $R$ is one of such algebras then it has codimension $n+1$. If $n\leq 1$ then the weak and strong Lefschetz properties hold for $R$ as shown in the previously cited works. In the very recent article \cite{DGI20} it is proved that the Jacobian ring of a quartic curve or of a cubic surface has the strong Lefschetz property. In this article, we use our methods to analyse one of the first open cases and we show the following:
\begin{thC}
Let $S=\bK[x_0,\cdots,x_4]$ and let $I\subset S$ be an ideal generated by a regular sequence of length $5$ of quadrics. Then $R=S/I$ satisfies the strong Lefschetz property, i.e. the general element $x\in R^1$ is such that
$$x^3\cdot :R^1\to R^4 \qquad\mbox{ and }\qquad x\cdot :R^2\to R^3$$
are both isomorphisms. In particular, the Jacobian ring of a smooth cubic $3$-fold has the strong Lefschetz property.
\end{thC}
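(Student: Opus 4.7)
The plan is to prove the two isomorphism statements in the SLP separately, building on the techniques developed for Theorem B. By Macaulay duality, $R = S/\text{Ann}(F)$ for a form $F$ of degree $N=5$ in $5$ variables, and the Hilbert function of $R$ is forced to be $(1,5,10,10,5,1)$. The hypothesis that $I$ is generated entirely in degree $2$ (equivalently, $I = (I^2)$) will be the key structural input that allows the passage from codimension $4$ (handled by Theorem B) to codimension $5$, where Gordan-Noether-type counterexamples are a priori possible.

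First I would address the map $x^3 \cdot : R^1 \to R^4$. Suppose by contradiction that it fails to be injective for every $x \in R^1$. Form the incidence $\Gamma_3 = \{([x],[y]) \in \bP(R^1)\times\bP(R^1) : x^3 y = 0\}$ whose first projection is dominant, and apply the ker-coker principle as in the proof of Theorem B to extract the Gorenstein-Gordan-Noether identity (Proposition \ref{PROP:CONE} and Corollary \ref{COR:GGN}) for a dominating component of $\Gamma_3$. Now exploit the quadric generation: since $I=(I^2)$, the multiplication maps $R^1 \otimes R^2 \to R^3$ and $R^2 \otimes R^2 \to R^4$ are surjective, and $\text{Ann}(y)_3 = R^1 \cdot \text{Ann}(y)_2$. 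The GGN identity, combined with these surjectivity statements, should force the subscheme cut by $\text{Ann}(y)_2$ in $\bP^4$ to carry a distinguished family of linear spaces analogous to the classical Gordan-Noether cone construction, which is incompatible with the Artinian Gorenstein apolar pair $(F,I)$ when the $5$ quadric generators form a regular sequence.

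Next, address the map $x \cdot : R^2 \to R^3$. Observe that this map is self-dual: the symmetric bilinear form $B_x(a,b) = xab \in R^5 \cong \bK$ on $R^2$ has matrix equal to that of multiplication by $x$ once $R^3$ is identified with $(R^2)^*$ via the Poincar\'e pairing. So proving surjectivity for general $x$ is equivalent to proving non-degeneracy of $B_x$. Set up the incidence $\Gamma' = \{([x],[a]) \in \bP(R^1)\times\bP(R^2) : xa = 0\}$; if the first projection were dominant, then differentiating a one-parameter family $(x(t),a(t))$ along $\Gamma'$ produces the infinitesimal condition $\xi \cdot a \in x \cdot R^2$ for every $\xi \in R^1$, i.e.\ $R^1 \cdot a \subseteq xR^2$. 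Iterating the ker-coker argument to second order, and again invoking $I=(I^2)$ to rewrite $a \cdot R^2 \subseteq R^4 = R^2 \cdot R^2$ in terms of products with the original regular sequence of quadrics, should yield a system of equations on $a$ forcing $a=0$.

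The main obstacle will be the geometric and combinatorial analysis at the final step in each case: showing that the GGN-type cone equations, combined with the regularity and generation conditions on the $5$ quadrics, exclude the Gordan-Noether counterexamples that exist for quintic hypersurfaces in $\bP^4$. In the degree-$1$ case, this amounts to checking that no regular sequence of $5$ quadrics can generate the annihilator of a quintic whose Hessian vanishes. In the middle-degree case the structure is similar but the incidence lives in $\bP(R^1)\times\bP(R^2)$, so the apolarity translations are slightly more delicate, and I expect the middle map to require a genuinely new argument rather than a direct reduction to the degree-$1$ case, since the self-duality of $B_x$ accounts for the Poincar\'e symmetry of $R$ but not for the interaction between different graded pieces of the algebra.
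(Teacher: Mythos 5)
Your setup---the incidence correspondences in $\bP(R^1)\times\bP(R^1)$ and $\bP(R^1)\times\bP(R^2)$, the ker-coker differentiation, the Gorenstein-Gordan-Noether identity---is the same as the paper's, but in both halves the proposal stops exactly where the proof has to begin: the decisive steps are phrased as ``should force'' and ``should yield'' with no mechanism behind them. For $SLP_1$ this is a genuine gap, not a deferred verification: Perazzo-type quintics in $\bP^4$ with vanishing hessian that are not cones do exist, so the statement you say the problem ``amounts to checking''---that no regular sequence of five quadrics is apolar to a vanishing-hessian quintic---is the theorem itself restated, and nothing in your sketch uses the regular-sequence hypothesis in a way that could distinguish these algebras from Perazzo's. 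The paper's mechanism is Proposition \ref{PROP:SameKer}: in a complete intersection of quadrics, $0\neq\eta\in R^h$ forces $\dim K^1_\eta\leq h$ (proved by completing a basis of $K^1_\eta$ to a regular sequence and comparing socle degrees); this is the one place where the hypothesis enters. Combined with the general bound $\dim Y\in\{1,2\}$ (Corollary \ref{COR:DIMY2}), the case $\dim Y=2$ is excluded by the classification-type result \cite[Lemma 7.4.13]{Rus16} on quintics in $\bP^4$ with vanishing hessian---an external input for which you offer no substitute---and the case $\dim Y=1$ dies by a bootstrap: $\dim\tilde F_y=4$ together with the kernel bound forces $x^2y=0$ on $\Gamma$, then $xy=0$, contradicting $WLP_1$ (Corollary \ref{COR:WLP1}, itself a consequence of the kernel bound via Corollary \ref{COR:DimYa}). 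None of these three ingredients appears in your proposal.

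For the middle map, your first-order relation $R^1\cdot a\subseteq xR^2$ is correct, but the second-order iteration ``forcing $a=0$'' is never performed, and your expectation that this case needs an argument independent of the degree-one case points the wrong way: what the paper proves is precisely the implication $SLP_1\Rightarrow SLP_2$. Concretely, multiplying your relation $\xi a+xw=0$ by $a$ gives $\xi a^2=0$ for all $\xi\in R^1$, hence $a^2=0$ by Gorenstein duality; so the image $Y\subseteq\bP(R^2)$ of the dominating component (of dimension $\geq 3$, since the fibres of $\pi_2$ have dimension $\leq 1$ by the kernel bound) lies in $\{[q]:q^2=0\}$, whose affine tangent space at $q$ is $K^2_q$. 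Proposition \ref{PROP:techprop} (again a consequence of Proposition \ref{PROP:SameKer}: $\dim K^2_q\leq 7$, so any $4$-dimensional subspace of it meets the $4$-dimensional space $x\cdot R^1$) then produces $\eta\neq0$ satisfying, after one more ker-coker expansion, $x^3\eta=0$, contradicting $SLP_1$. Two further inaccuracies: the surjectivity of $R^1\otimes R^2\to R^3$ and $R^2\otimes R^2\to R^4$ holds in any standard graded algebra, so it extracts nothing from $I=(I^2)$; and your claim $\Ann(y)_3=R^1\cdot\Ann(y)_2$ does not follow from $I$ being generated by quadrics (the colon ideal $(0:y)$ need not be generated in low degrees) and is left unjustified.
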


Moreover, as we prove that SLP holds for particular complete intersection SAGAs, our result also gives some evidence of the conjecture (\cite[Conjecture 3.46 pag. 120]{bookLef}) which claims that, in characteristic $0$, all complete intersection standard Artinian algebras should satisfy the weak and the strong Lefschetz properties. This is known, in the weak case, when the codimension is at most $3$ (see \cite{HMNW03}). It would be very interesting to know if all the Jacobian rings of smooth cubics satisfy some Lefschetz properties.
\vspace{2mm}

The article is divided into two parts. The first one comprises sections \ref{SEC:1},\ref{SEC:2} and \ref{SEC:3} and it is devoted to our proof of Gordan-Noether Theorem. More precisely, in Section \ref{SEC:1} we fix some notations and we recall some facts we will use later on. Section \ref{SEC:2} is the heart of the work where we prove Gorenstein-Gordan-Noether identity and Theorem B (see Theorem \ref{THM:LEFSCHETZ1}). In Section \ref{SEC:3}, we prove the Gordan-Noether theorem (Theorem A) by recalling some standard facts form Macaulay's theory, which show that it is equivalent to Theorem B. 
Moreover, we show that our Gorenstein-Gordan-Noether identity implies a relevant consequence of the classical identity. We have tried to keep this first part completely self-contained (up to Macaulay's theory) and to avoid using any unnecessary assumption. For instance we do not use that the vanishing of the hessian of $f\in \bK[x_0,\dots,x_n]$ does not depend on the reducedness of $f$, as shown by the beautiful result of Dimca and Papadima (see \cite{DP03}).
The second part is devoted to prove Theorem C (in Section \ref{SEC:4}) and to study a classical example with our methods. More precisely, in Section \ref{SEC:5}, we will analyze the intriguing components of the incidence correspondence $\Gamma_{N-2}$ of the easiest counterexample to Hesse's claim in $\bP^4$: the Perazzo (singular) cubic $3$-fold.
\bigskip

\noindent Acknowledgements: \\
The authors are thankful to Francesco Russo for helpful support on the topics of the paper and its observations. The authors are partially supported by INdAM - GNSAGA and by PRIN \emph{``Moduli spaces and Lie theory''} and by (MIUR): Dipartimenti di Eccellenza Program (2018-2022) - Dept. of Math. Univ. of Pavia.\\

% ------------------------------------------------------------------------
% ------------------------------------------------------------------------
% ------------------------------------------------------------------------

\section{Artinian algebras and Lefschetz properties}
\label{SEC:1}

Good references for the content of this introductory section are \cite{textVoi2,bookLef,Rus16}. Let $\bK$ be a field of characteristic $0$ which is algebraically closed. In this paper we will deal with standard Artinian Gorenstein algebras:

\begin{definition}
\label{DEF:ARTGOR}
Let $R=\bigoplus_{i=0}^NR^i$ be an Artinian graded $\bK$-algebra. Then
\begin{itemize}
    \item the {\bf codimension} of $R$ is the dimension of $R^1$ as $\bK$-vector space; 
    \item $R$ is said to be {\bf standard} if it is generated, as $\bK$-algebra, by $R^1$;
    \item $R$ is said to have the {\bf Poincar{\'e} duality} if $R^N\simeq \bK$ and the multiplication map $R^s\times R^{N-s}\to R^{N}$ is a perfect pairing whenever $0\leq s\leq N$.
\end{itemize}
If $R$ is a graded Artinian algebra, having the Poincar{\'e} duality is equivalent to ask that $R$ is Gorenstein so the above duality is also called {\bf Gorenstein duality}. If $R$ is Gorenstein, $R^N$ is called {\bf socle} of $R$ and $R^0\simeq R^N$. 
\end{definition}

We recall two ways for constructing standard Artinian Gorenstein algebras (SAGAs in short) which are relevant for our work. We will denote by $S=\bK[x_0,\cdots,x_n]$ the polynomial ring in $n+1\geq1$ variables.

\begin{example}
\label{EX:SAGAviaREGSEQ}
If $e_0,\dots,e_n\geq 1$, consider a regular sequence $\{f_0,\dots,f_n\}$ in $S$ with $f_i\in S^{e_i}$. If $I=(f_0,\dots,f_n)$ then $S/I$ is a standard Artinian Gorenstein algebra with socle in degree $\sum_{i=0}^n(e_i-1)$. Particular algebras obtained via this construction are Jacobian rings associated to smooth hypersurfaces of degree $d\geq 2$ in $\bP^n$. In this case, if $X=V(F)$, one takes $f_i=\partial F/\partial x_i\in S^{d-1}$ so that $N=(d-2)(n+1)$ and $I$ is the ideal generated by the partial derivatives of $F$ (and it is called Jacobian ideal of $F$). 
\end{example}

\begin{example}
\label{EX:SAGAviaANN}
Consider the ring $\cQ$ of differential operators in the variables $x_0,\dots,x_n$, i.e. $\cQ=\bK\left[y_0,\cdots,y_n\right]$, where we have defined $y_i=\frac{\partial}{\partial x_i}$ for brevity. If $G\in \bK[x_0,\cdots,x_n]$ is any fixed homogeneous polynomial of degree $d\geq 1$, one can define the annihilator of $G$ in $\cQ$ as the ideal $$\Ann_{\cQ}(G)=\{D\in \cQ\,|\, D(G)=0\}.$$
It is not difficult to see that the quotient $R=\cQ/\Ann_{\cQ}(G)$ is a standard Artinian Gorenstein algebra with socle in degree $d$. 
\end{example}

All standard Artinian Gorenstein algebras have a description as in Example \ref{EX:SAGAviaANN} by an important result of Macaulay (see \cite{Mac94} for a revisited reprint of originary work by Macaulay of 1916, or also \cite[pag. 189]{Rus16} for the statement with modern language).

\begin{definition}
\label{DEF:LefProp}
Let $R=\bigoplus_{i=0}^NR^i$ be an Artinian Gorenstein graded $\bK$-algebra. We say that $R$ satisfies the 
\begin{itemize}
    \item weak Lefschetz property in degree $k$, ($WLP_k$ in short) if there exists $L\in R^1$ such that $L\cdot :R^k\to R^{k+1}$ has maximal rank;
    \item strong Lefschetz property (in narrow sense) in degree $k\leq N/2$, ($SLP_k$  in short) if there exists $L\in R^1$ such that $L^{N-2k}\cdot :R^k\to R^{N-k}$ is an isomorphism.
\end{itemize}
The algebra $R$ has the {\bf weak (strong) Lefschetz property} - $WLP$ (resp. $SLP$) in short - if it satisfies $WLP_k$ (resp. $SLP_k$) for all $k$.
\end{definition}

We recall a standard result concerning particular quotient Gorenstein rings that we will use in the following. For a simple proof one can refer to \cite{FP21}.

\begin{lemma} 
\label{LEM:RmodCOND}
Let $R=\oplus_{i=0}^N R^i$ be a Gorenstein ring with socle in degree $N$. Fix $\alpha\in R^e\setminus\{0\}$ and consider the ideal 
$$(0:\alpha)=\bigoplus_{i=0}^N\ker(\alpha\cdot :R^i\to R^{i+e}).$$
We have that $R_{\alpha}=R/(0:\alpha)$ is a Gorenstein ring with socle in degree $N_{\alpha}=N-e$. 
\end{lemma}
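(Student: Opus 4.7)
The plan is to exploit the fact that multiplication by $\alpha$ induces a degree-$e$ isomorphism of graded $\bK$-vector spaces between $R_\alpha$ and the ideal $\alpha R \subseteq R$, and then transfer the Gorenstein duality from $R$ to $R_\alpha$ through this identification. The ideal $(0:\alpha)$ is manifestly homogeneous, so $R_\alpha = R/(0:\alpha)$ inherits a grading from $R$. By the first isomorphism theorem applied to $\alpha\cdot : R \to R$, one has $R_\alpha^i \xrightarrow{\sim} \alpha R^i \subseteq R^{i+e}$ as $\bK$-vector spaces.

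Next I would identify the socle degree. For $i > N-e$ one has $i+e > N$, hence $\alpha R^i \subseteq R^{i+e} = 0$, so $R_\alpha^i = 0$. For $i = N-e$, the claim is that $\alpha R^{N-e} = R^N$. Since $\alpha \neq 0$ in $R^e$ and the pairing $R^e \times R^{N-e}\to R^N$ is perfect (Gorenstein duality of $R$), there exists $\beta \in R^{N-e}$ with $\alpha\beta \neq 0$ in $R^N$. As $\dim_\bK R^N = 1$, this forces $\alpha R^{N-e} = R^N \cong \bK$, hence $R_\alpha^{N-e} \cong \bK$.

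To verify the Poincaré duality for $R_\alpha$, I would check nondegeneracy of the pairing $R_\alpha^s \times R_\alpha^{N-e-s} \to R_\alpha^{N-e}$ directly. Take any $[x] \in R_\alpha^s$ with $[x] \neq 0$; by definition, $\alpha x \neq 0$ in $R^{s+e}$. Using the perfect pairing $R^{s+e} \times R^{N-s-e} \to R^N$ in $R$, we get $y \in R^{N-s-e}$ with $(\alpha x) y \neq 0$ in $R^N$. Reading this as $\alpha(xy) \neq 0$ shows $[xy] \neq 0$ in $R_\alpha^{N-e}$, so the pairing is nondegenerate on the left; symmetry finishes the perfectness.

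There is no real obstacle here: the whole argument is a bookkeeping exercise once one recognizes that $R_\alpha$ is isomorphic to the principal ideal $\alpha R$ as a graded module (up to a shift by $e$), so the Gorenstein structure of the ambient ring $R$ can be restricted/pulled back to give the Gorenstein structure of $R_\alpha$. The only point that deserves care is keeping track of the degree shift by $e$ when translating between statements about $R_\alpha^i$ and statements about $\alpha R^i \subseteq R^{i+e}$.
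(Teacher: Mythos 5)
Your proof is correct. Note that the paper does not actually prove Lemma \ref{LEM:RmodCOND} at all --- it defers to the reference \cite{FP21} for ``a simple proof'' --- so there is no in-paper argument to compare against; what you have written is precisely the standard argument that citation stands in for: identify $R_\alpha^i$ with $\alpha R^i \subseteq R^{i+e}$, conclude $R_\alpha^i = 0$ for $i > N-e$ and $R_\alpha^{N-e} \cong R^N \cong \bK$, and descend the perfect pairing of $R$ to $R_\alpha$ by observing that $[x] \neq 0$ means $\alpha x \neq 0$, so Gorenstein duality in $R$ produces $y$ with $\alpha x y \neq 0$, i.e.\ $[x][y] \neq 0$ in the one-dimensional $R_\alpha^{N-e}$. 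All the steps check out, including the implicit ones (well-definedness of the quotient pairing, and deducing perfectness from nondegeneracy in the two complementary degrees), so the proposal is a complete, self-contained proof of the lemma.
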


% -------------------------------------------------------------------------------
% -------------------------------------------------------------------------------
% -------------------------------------------------------------------------------
% -------------------------------------------------------------------------------
% -------------------------------------------------------------------------------

\section{SAGAs of codimension $\leq 4$ and strong Lefschetz property in degree $1$}
\label{SEC:2}

Let $\bK$ be an algebraically closed field with characteristic $0$. 
Let $R=\bigoplus_{i\ge 0}R^i$ a SAGA (i.e. a standard Artinian Gorenstein $\bK$-algebra) with socle in degree $N$ and assume $\dim R^1=n+1$ with $n\geq 1$.

For $2\leq i\leq N$ and $j\in\{1,\cdots,N-1\}$ we define 
$$Y_{i}= \{[y]\in \bP(R^1)\,|\, y^{i}=0\}\qquad \mbox{ and }\qquad 
\Gamma_j=\{([x],[y])\in \bP(R^1) \times \bP(R^1)\,|\, x^{j}y=0\}$$
where $\bP(R^1)$ is the projectivization of the vector space $R^1$.
By construction we have $Y_i\subseteq Y_{i+1}$ and $\Gamma_{j}\subseteq \Gamma_{j+1}$. 

\begin{remark}
\label{REM:HP}
Notice that, since $R$ is a standard $\bK$-algebra, if $i\leq N$, we must have $Y_i\subsetneq\bP(R^1)$. Indeed, if $Y_i=\bP(R^1)$ we would have that all $i$-th powers of elements of $R^1$ would be $0$. Since $R$ is standard, $i$-th powers of $R^1$ generate $R^i$ as vector space so $R^i=0$. This is clearly possible only if $i>N$.
\end{remark}

From now on, we will fix an integer $k$ such that $1\leq k\le N-2$ and, by denoting with $\pi_1$ and $\pi_2$ the projections of $\Gamma_k$ on the factors,
we will assume that the following condition holds:
$$(\star)\quad \pi_1:\Gamma_k\to \bP(R^1) \mbox{ is surjective}.$$

After proving some general results that hold for every value of $k$ in this range, in the following subsection we will focus on the specific case $k=N-2$, the most relevant one for our aims (see Remark \ref{REM:Valk}).

\begin{remark} 
\label{REM:Valk}
Notice that $(\star)$ is equivalent to asking that the multiplication map $x^k\cdot:R^1\to R^{k+1}$ is never injective, i.e. that $R$ does not satisfy $SLP$ at range $k$ in degree $1$. If $k=N-2$ (as we will assume from subsection \ref{SUBSEC:N-2} onwards), $(\star)$ holds if and only if $R$ does not satisfy $SLP_1$.
\end{remark}

Since $\pi_1:\Gamma_k\to \bP(R^1)$ is surjective, there exists an irreducible component of $\Gamma_k$ that dominates $\bP(R^1)$ via $\pi_1$. By observing that all the fibers of $\pi_1$ are irreducible (the fiber over $[x]$ is the projective space $[x]\times\bP(\Ann_{R^1}(x^k))$), one can easily obtain the following: 

\begin{lemma}
\label{LEM:UniqueGamma}
Under assumption $(\star)$, there exists a unique irreducible component of $\Gamma_k$ which dominates $\bP(R^1)$ via $\pi_1$.
\end{lemma}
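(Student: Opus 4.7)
The plan is to use lower semicontinuity of the rank of the multiplication map together with the given fact that all fibres of $\pi_1$ are irreducible projective spaces, producing a distinguished irreducible component of $\Gamma_k$ as the closure of a projective bundle over a dense open subset of $\bP(R^1)$.

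First I would set
\[
d+1 := \min_{[x]\in \bP(R^1)} \dim_\bK \Ann_{R^1}(x^k),
\]
which is at least $1$ by $(\star)$. Since the multiplication-by-$x^k$ map depends polynomially on $[x]$, its rank is lower semicontinuous in $[x]$, so the locus
\[
U := \{[x]\in \bP(R^1)\,|\, \dim_\bK \Ann_{R^1}(x^k)=d+1\}
\]
is Zariski-open and nonempty, hence dense in the irreducible variety $\bP(R^1)$. Over $U$ the family $\{\Ann_{R^1}(x^k)\}_{[x]\in U}$ is the kernel of a morphism of trivial vector bundles of constant rank, hence an algebraic vector subbundle of $U\times R^1$. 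Consequently $V := \pi_1^{-1}(U)\subseteq \Gamma_k$ is a projective bundle of relative dimension $d$ over $U$; in particular $V$ is irreducible of dimension $n+d$, and it is open in $\Gamma_k$ since $\pi_1$ is a morphism.

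Next I would note that, since $V$ is an irreducible open subset of $\Gamma_k$, there is a unique irreducible component $Z$ of $\Gamma_k$ containing $V$, and moreover $Z = \overline{V}$, because $V$ is open and dense in $Z$. By construction $\pi_1(Z)\supseteq\pi_1(V)=U$, so $Z$ dominates $\bP(R^1)$ via $\pi_1$, proving existence.

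For uniqueness, suppose $Z'$ is any irreducible component of $\Gamma_k$ that dominates $\bP(R^1)$ via $\pi_1$. Then $\pi_1|_{Z'}$ is dominant, so $Z'\cap V = Z'\cap \pi_1^{-1}(U)$ is a nonempty open subset of $Z'$; by irreducibility of $Z'$ it is also dense. Since $V\subseteq Z$, we have $Z'\cap V \subseteq Z\cap Z'$, a closed subset of $Z'$ containing the dense set $Z'\cap V$; hence $Z'\subseteq Z$, and as both are irreducible components of $\Gamma_k$ we conclude $Z'=Z$. The only mildly delicate point is the vector-bundle structure of the family of kernels over $U$, but that is standard (kernel of a constant-rank morphism of vector bundles), so no real obstacle is anticipated.
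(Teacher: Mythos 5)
Your proof is correct, and it takes a somewhat different route from the paper's. The paper's own argument is a two-line one resting on the irreducibility of \emph{every} fiber of $\pi_1$: since $\pi_1$ is surjective and $\bP(R^1)$ is irreducible, some component dominates; and since each fiber $[x]\times\bP(\Ann_{R^1}(x^k))$ is an irreducible projective space, it must lie entirely inside a single irreducible component of $\Gamma_k$, so looking at the fiber over a general point forces any two dominating components to coincide. You instead construct the dominating component explicitly: lower semicontinuity of the rank of multiplication by $x^k$ gives a dense open $U\subseteq \bP(R^1)$ where the kernel dimension is constant, $\pi_1^{-1}(U)$ is then an irreducible projective bundle which is \emph{open} in $\Gamma_k$, and its closure is the unique dominating component because any dominating component meets this open set in a dense subset. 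Both arguments ultimately exploit the linear structure of the fibers, but yours buys more: it identifies $\Gamma$ as $\overline{\pi_1^{-1}(U)}$, shows that the generic fiber of $\pi_1|_{\Gamma}$ has constant dimension $d$, and yields $\dim \Gamma = n+d$ --- precisely the facts the paper later invokes without separate proof (e.g.\ in Proposition \ref{PROP:DIMY3}, where $\dim(\Gamma)=n+r-1$ with $r-1$ the dimension of the general fiber). The paper's route is shorter and needs no vector-bundle input (your one appeal to the fact that the kernel of a constant-rank morphism of bundles is a subbundle), at the cost of leaving the uniqueness mechanism implicit.
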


We will denote by $\Gamma$ this unique component of $\Gamma_k$ that dominates $\bP(R^1)$ via $\pi_1$. For brevity, we will denote by $\pi_i$ also the restriction of $\pi_i$ to $\Gamma$ for $i=1,2$. Set $Y=\pi_2(\Gamma)$ and, for any $[y]\in Y$, 
$$F_y=\pi_1(\pi_2^{-1}([y])\cap \Gamma)=\{[x]\in \bP(R^1)\,|\, x^ky=0 \mbox{ and } ([x],[y])\in \Gamma\}.$$

The diagram summarizes the framework we are going to focus on for the whole article.

$$
\xymatrix@R=10pt{
F_y\times [y] \ar[ddd]_{\simeq} \ar@{^{(}->}[rd] \ar[rrr] & & & [y] \ar@{^{(}->}[d] \\ 
& \Gamma \ar@/_1.0pc/@{->>}[ddr]_-{\pi_1}  \ar@{->>}[rr]^-{\pi_2} \ar@{^{(}->}[rd]  & & Y \ar@{^{(}->}[d] \\
& & \Gamma_{k} \ar[r]^-{\pi_2} \ar@{->>}[d]^-{\pi_1} & \bP(R^1) \\
F_y \ar@{^{(}->}[rr] & & \bP(R^1)
}
$$

%%%%%%%%%%%%%%%%%%%%%%%%%%%%%%%%%%%%%%%%%%%%%%%%%%%%%%%%%%%%%%%%%%%%%%%%%%%%%%%%%%%%%%%%%%%%%%%%%%%%%%%%%%%%%%%%%%%%%%%%%%%%%%%%%%%%%%%%%%%%%%%%%%%%%%%%%%%%%%%%%%%%%%%%%%%%%%%%%%%%%%%%%%%%%%%%%%%%%%%%%%%%%%%%%%%%%%%%%%%%%%%%%%%%%%%%%%%%%%%%%%%%%%%%%%%%%%%%%%%%%%%%

The following proposition gives a collection of equations which are satisfied by the points of $\Gamma$.

\begin{proposition} [Ker-Coker principle]
\label{PROP:CONE}
If $p=([x],[y])\in \Gamma$ then $$x^iy^j=0,$$ for all $i\geq 0$ and $j\geq 1$ such that $i+j=k+1$. 
\end{proposition}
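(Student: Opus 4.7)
The plan is to argue by induction on $j$. The base case $j=1$ is just the defining identity $x^{k}y=0$ of $\Gamma\subseteq\Gamma_{k}$. For the inductive step I assume $x^{k+1-j}y^{j}\equiv 0$ on $\Gamma$ for some $1\le j\le k$ and try to deduce $x^{k-j}y^{j+1}\equiv 0$ on $\Gamma$.

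The key ingredient will be the dominance of $\pi_{1}\colon\Gamma\to\bP(R^{1})$ afforded by $(\star)$: combined with the irreducibility of $\Gamma$, it ensures that at a general smooth point $(x,y)\in\Gamma$ the differential $d\pi_{1}$ is surjective, so every $v\in R^{1}$ lifts to a tangent vector $(v,w)\in T_{(x,y)}\Gamma$ for some $w\in R^{1}$. Differentiating the identity $x^{k+1-j}y^{j}=0$ (which holds on $\Gamma$ by the inductive hypothesis) along $(v,w)$ gives, in $R^{k+1}$,
\[
(k+1-j)\,x^{k-j}\,v\,y^{j}\;+\;j\,x^{k+1-j}\,y^{j-1}\,w\;=\;0.
\]
Multiplying by $y$ turns the $w$-term into a multiple of $x^{k+1-j}y^{j}=0$ and leaves $(k+1-j)\,x^{k-j}y^{j+1}\cdot v=0$. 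Since $j\le k$ the coefficient is nonzero, so the element $\alpha:=x^{k-j}y^{j+1}\in R^{k+1}$ annihilates all of $R^{1}$.

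To conclude I would invoke the SAGA structure of $R$: standardness propagates $\alpha\cdot R^{1}=0$ to $\alpha\cdot R^{m}=0$ for every $m\ge 1$, and then the Gorenstein pairing $R^{k+1}\times R^{N-k-1}\to R^{N}$ forces $\alpha=0$ at the general smooth point; closedness of the equation and irreducibility of $\Gamma$ spread the vanishing to every $p\in\Gamma$. The step I expect to require the most care is the tangent-space computation itself, since the tangent vectors $(v,w)$ are only well defined modulo $\langle x\rangle$ and $\langle y\rangle$; fortunately, the ambiguous correction terms are themselves multiples of $x^{k+1-j}y^{j}$, which vanishes by the inductive hypothesis, so the displayed identity is independent of the chosen lifts. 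Once this check is in hand the induction runs uniformly and reaches $y^{k+1}=0$ at step $j=k$.
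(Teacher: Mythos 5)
Your proof is correct and follows essentially the same route as the paper's: an induction on $j$ whose inductive step differentiates the relation $x^{k+1-j}y^{j}=0$ along a lift of an arbitrary tangent direction $v\in R^1$ (the paper phrases this via a curve $(x+tv,\beta(t))$ in $\Gamma$ and its Taylor expansion rather than via surjectivity of $d\pi_1$, but it is the same first-order deformation argument), then multiplies by $y$ to kill the unknown component $w$, and concludes by the non-degeneracy of $R^1\times R^{k+1}\to R^{k+2}$. Your closing step merely spells out why that pairing is non-degenerate (standardness plus Gorenstein duality), which the paper asserts in one line.
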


\begin{proof}
Let us consider a general point $p=([x],[y])\in \Gamma$, so $x^ky=0$ by definition. We claim that $p$ satisfies also $x^{k-1}y^2=0$.

For any $v\in R^1$ and $t\in \bK$, let us take $x'=x+tv\in R^1$. Since $\pi_1:\Gamma\to \bP(R^1)$ is surjective by assumption $(\star)$, we have that there exists $y'$ in $R^1\setminus\{0\}$ such that $(x')^ky'=0$. Then we can define $\beta(t)$ such that $\beta(0)=y$ and $(x')^k\beta(t)=0$ for all $t\in \bK$. 
We can consider the expansion of $\beta$ and write this relation as
$$0\equiv(x+tv)^k(y+tw+t^2(\cdots))=x^ky+t(kvx^{k-1}y+wx^k)+t^2(\cdots).$$
If we multiply by $y$ we get that
$$kvx^{k-1}y^2=0 \qquad \forall v\in R^1.$$
Since the multiplication map $R^1\times R^{k+1}\to R^{k+2}$ is non degenerate we have that $x^{k-1}y^2=0$ as claimed. This proves that all the points of $\Gamma$ satisfy also the relation $x^{k-1}y^2=0$. 
\vspace{2mm}

In the same way one shows that if all the points of $\Gamma$ satisfy the relation $x^{i}y^{j}=0$ with $i+j=k+1$ and $j\geq 1$, then they also satisfy the relation $x^{i-1}y^{i+1}=0$. This concludes the proof.
\end{proof}

As an easy but fundamental consequence of Proposition \ref{PROP:CONE}, we obtain the following:
\begin{corollary}[Gorenstein-Gordan-Noether identity]
\label{COR:GGN}
Let $([x],[y])\in \Gamma$. Then the following relations hold for all $t\in \bK$ and $(\lambda:\mu)\in\bP^1$:
\begin{equation}
\label{EQ:GGN}
(x+ty)^{k+1}=x^{k+1}\in R^{k+1}\qquad\mbox{ and }\qquad [(\lambda x+\mu y)^{k+1}]=[x^{k+1}]\in \bP(R^{k+1})\quad (\mbox{if } x^{k+1}\neq 0).
\end{equation}
\end{corollary}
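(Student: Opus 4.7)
The plan is to derive both identities by a straightforward binomial expansion in the commutative graded algebra $R$, applying Proposition \ref{PROP:CONE} to kill off almost every term. Since this is labelled a corollary, I expect the work to be essentially mechanical: the heavy lifting has already occurred in the ker-coker principle.

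For the first identity, I would expand
$$(x+ty)^{k+1} = \sum_{j=0}^{k+1} \binom{k+1}{j} t^j\, x^{k+1-j} y^{j} = x^{k+1} + \sum_{j=1}^{k+1} \binom{k+1}{j} t^j\, x^{k+1-j} y^{j}.$$
For each $j$ in the range $1 \leq j \leq k+1$, set $i = k+1-j \geq 0$; then $i+j = k+1$ and $j \geq 1$, so Proposition \ref{PROP:CONE} applied to $p=([x],[y])\in\Gamma$ gives $x^{i}y^{j} = 0$. Hence every term with $j \geq 1$ vanishes in $R^{k+1}$, leaving $(x+ty)^{k+1} = x^{k+1}$ as required. (We never needed that $x^{k+1}\neq 0$.) Note that in the extreme case $j = k+1$ this also recovers $y^{k+1}=0$, which is consistent.

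For the projective identity I would argue identically: expanding
$$(\lambda x + \mu y)^{k+1} = \sum_{j=0}^{k+1} \binom{k+1}{j} \lambda^{k+1-j} \mu^{j}\, x^{k+1-j} y^{j}$$
and invoking Proposition \ref{PROP:CONE} for every $j\geq 1$, only the $j=0$ term survives, so $(\lambda x + \mu y)^{k+1} = \lambda^{k+1} x^{k+1}$ in $R^{k+1}$. Under the assumption $x^{k+1}\neq 0$, this is a nonzero element of $R^{k+1}$ whenever $\lambda \neq 0$, and clearly lies in the line $\bK \cdot x^{k+1}$; thus $[(\lambda x+\mu y)^{k+1}] = [x^{k+1}]$ in $\bP(R^{k+1})$ for all $(\lambda:\mu)$ with $\lambda\neq 0$ (and the remaining point $(0:1)$ simply gives $\mu^{k+1}y^{k+1}=0$, so there is nothing to check there).

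There is no real obstacle: the only subtlety is bookkeeping the range of $j$ so that $i = k+1-j$ is indeed $\geq 0$ and $j\geq 1$, which is exactly the hypothesis range of Proposition \ref{PROP:CONE}. The proof is a one-line binomial expansion combined with the ker-coker vanishing, and no characteristic-zero hypothesis enters beyond ensuring the binomial coefficients are harmless (and we do not even need that, since we kill each monomial $x^{k+1-j}y^j$ individually).
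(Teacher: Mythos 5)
Your proof is correct and is exactly the argument the paper intends: the paper presents this corollary as an immediate consequence of Proposition \ref{PROP:CONE}, with the binomial expansion killing every mixed term $x^{k+1-j}y^j$ ($j\geq 1$), and the projective statement being just the rescaled version of the affine one. Your additional remarks (handling $(\lambda:\mu)=(0:1)$, and noting that characteristic zero is not even needed here) are accurate refinements of the same approach, not a different route.
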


The second equality is simply the projective version of the first one. The origin of the name we have given to these equalities lies in the classical Gordan-Noether identity as explained in the introduction. In Section \ref{SEC:3}, by using the above relation, we will prove Equation \eqref{EQ:GN} that is one of the key identities used in the original proof of Gordan-Noether Theorem.

\begin{proposition}
\label{PROP:DIMY}
The following properties hold:
\begin{enumerate}[(a)]
\item $Y\subseteq Y_{k+1}=  \{[y]\in \bP : y^{k+1}=0\}\subsetneq \bP(R^1)$;
\item If $[y]\in Y$ is general, then $F_y$ is a cone with vertex $[y]$. Moreover, the general $F_y$ is connected;
\item $\dim F_y+\dim Y\geq \dim(\Gamma)\geq n$;
\item $1\leq \dim F_y\leq n-1$ and $1\leq \dim Y\leq n-1$.
\end{enumerate}
\end{proposition}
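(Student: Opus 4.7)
The plan is to prove the four parts in order, using the Ker-Coker principle (Proposition~\ref{PROP:CONE}) throughout and pulling in one specific extra ingredient for each part.

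\textbf{Part (a).} I would apply Proposition~\ref{PROP:CONE} with $(i,j)=(0,k+1)$ to any $([x],[y])\in\Gamma$; this directly gives $y^{k+1}=0$, whence $Y\subseteq Y_{k+1}$. Since $k+1\leq N-1\leq N$, Remark~\ref{REM:HP} forces $Y_{k+1}\subsetneq\bP(R^1)$, and in particular $\dim Y\leq n-1$, which is also half of (d).

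\textbf{Part (b).} For $([x],[y])\in\Gamma$ with $[x]\neq[y]$, I would first expand
$$(\lambda x+\mu y)^{k}\,y\;=\;\sum_{i=0}^{k}\binom{k}{i}\lambda^{k-i}\mu^{i}\,x^{k-i}y^{i+1}$$
and use Proposition~\ref{PROP:CONE} to kill every summand, giving $([\lambda x+\mu y],[y])\in\Gamma_{k}$ for every $(\lambda:\mu)\in\bP^{1}$. The delicate step is to lift this from $\Gamma_{k}$ to $\Gamma$. My plan is to consider the rational map
$$\Phi:\bP^{1}\times\Gamma\dashrightarrow\Gamma_{k},\qquad\bigl((\lambda:\mu),([x],[y])\bigr)\longmapsto\bigl([\lambda x+\mu y],[y]\bigr),$$
which is defined on a dense open set (otherwise $\Gamma$ would lie in the diagonal, forcing $\pi_{1}(\Gamma)\subseteq Y_{k+1}$ and contradicting $(\star)$). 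The image is irreducible, contains $\Gamma$ (take $\mu=0$), and lies in $\Gamma_{k}$, so by Lemma~\ref{LEM:UniqueGamma} its closure must equal $\Gamma$. This shows $F_{y}$ is a cone with vertex $[y]$ (note $(\lambda:\mu)=(0:1)$ gives $[y]\in F_y$), and connectedness of the general $F_y$ is then automatic from the cone structure.

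\textbf{Parts (c) and (d).} For (c), I would apply the fiber dimension theorem to the surjection $\pi_{2}:\Gamma\to Y$, whose fiber over $[y]$ has dimension $\dim F_{y}$; upper semicontinuity gives $\dim F_{y}+\dim Y\geq\dim\Gamma$, while dominance of $\pi_{1}$ under $(\star)$ yields $\dim\Gamma\geq n$. The bound $\dim Y\leq n-1$ is already in hand from (a), and both lower bounds $\dim F_y,\dim Y\geq 1$ in (d) drop out by combining (c) with the upper bounds. The step I expect to be the main obstacle is the remaining inequality $\dim F_{y}\leq n-1$. Here my plan is by contradiction: if $F_{y}=\bP(R^{1})$ then $x^{k}y=0$ for every $x\in R^{1}$, and the characteristic-zero polarization identity
$$k!\,x_{1}\cdots x_{k}\;=\;\sum_{\emptyset\neq S\subseteq\{1,\dots,k\}}(-1)^{k-|S|}\Bigl(\sum_{i\in S}x_{i}\Bigr)^{k}$$
shows that $k$-th powers of linear forms span $R^{k}$, so $yz=0$ for every $z\in R^{k}$. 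Since $R$ is standard and $k\leq N-2$, we have $R^{N-1}=R^{k}\cdot R^{N-1-k}$; hence $y\cdot R^{N-1}=0$, and the perfect Gorenstein pairing $R^{1}\times R^{N-1}\to R^{N}$ forces $y=0$, a contradiction. This polarization-plus-duality argument is the only non-formal ingredient; everything else is bookkeeping with Proposition~\ref{PROP:CONE} and standard fiber dimension theory.
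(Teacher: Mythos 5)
Your proposal is correct, and in parts (a), (c) and (d) it is essentially the paper's own proof; in (d) you even supply the justification that the paper compresses into the bare assertion that ``$R^1\times R^k\to R^{k+1}$ is non-degenerate'', namely polarization in characteristic zero plus pushing up to degree $N-1$ and using the perfect pairing $R^1\times R^{N-1}\to R^N$. The one genuinely different step is (b). The paper argues curve by curve: for $[y]$ general and $p=([x],[y])$ in $\Gamma$ but outside the union $\Gamma^c$ of the other components of $\Gamma_k$, the irreducible curve $(\lambda:\mu)\mapsto([\lambda x+\mu y],[y])$ lands in $\Gamma_k$ and passes through a point not in $\Gamma^c$, hence lies in $\Gamma$; it then needs a separate gluing argument (every connected component of $F_y$ is a cone with vertex $[y]$, hence all components coincide). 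You instead assemble all these lines into one rational map $\Phi:\bP^1\times\Gamma\dashrightarrow\Gamma_k$ and observe that the closure of its image is an irreducible closed subset of $\Gamma_k$ containing $\Gamma$, hence equal to $\Gamma$, so the whole image lies in $\Gamma$. This is cleaner and strictly stronger: it gives the cone property for \emph{every} $[y]\in Y$, not just general $[y]$, and connectedness of $F_y$ becomes automatic since every point lies on a line through the vertex. Two small inaccuracies, neither of which harms the argument: what your lifting step really uses is not the uniqueness statement of Lemma~\ref{LEM:UniqueGamma} but the maximality of an irreducible component among irreducible closed subsets of $\Gamma_k$; and your worry about the domain of definition of $\Phi$ is unnecessary, since $\Phi$ is undefined at no more than one point of each $\bP^1$-fiber (when $[x]=[y]$). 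Likewise, in (c) the relevant fact is the fiber-dimension theorem for the surjection $\pi_2:\Gamma\to Y$ rather than semicontinuity, exactly as in the paper.
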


\begin{proof}
By Proposition \ref{PROP:CONE} we have that all the points of $\Gamma$ satisfy $y^{k+1}=0$. Then, by definition, we have $\pi_2(\Gamma)=Y\subseteq Y_{k+1}$. By Remark \ref{REM:HP} we have $Y_{k+1}\neq \bP(R^1)$ so we have proved claim $(a)$.
\vspace{2mm}

Before proving $(b)$, notice the following properties. For brevity, denote by $\Gamma^c$ the union of all the irreducible components of $\Gamma_k$ different from $\Gamma$. 
For any $p=([x],[y])\in \Gamma$ one can consider the curve $\gamma_p:\bP^1\to \bP(R^1)\times \bP(R^1)$ defined by 
$$\gamma_p((\lambda:\mu))=([\lambda x+\mu y], [y]).$$
Since $x^iy^j=0$ whenever $i+j=k+1$ and $j\geq 1$ we have $(\lambda x+\mu y)^ky=0$ so $\gamma_p$ has image in $\Gamma_k$. Whenever $p=([x],[y])\in \Gamma\setminus \Gamma^c$, we have that the curve $\gamma_p$ has image in $\Gamma$. In this case, the line parametrized by $\pi_1\circ\gamma_p$ is contained in $F_y$ and it is spanned by $[x]$ and $[y]$.
\vspace{2mm}

Now we will prove $(b)$. If $[y]\in Y$ is general, we have that $\pi_2^{-1}([y])\cap (\Gamma\setminus \Gamma^c)$ is an open dense subset of $F_y\times [y]$. Let $C$ be a connected component of $F_y$ and consider any $p=([x],[y])\in (C\times [y])\cap (\Gamma\setminus \Gamma^c)$, then the image of the curve $\gamma_p$ is contained in $\Gamma$ and pass through $([y],[y])$. So $[y]\in C$ and the line in $\bP(R^1)$ passing through $[x]$ and $[y]$ is contained in $C$. Since $[x]$ is general, we have that $C$ is a cone with vertex $[y]$. Moreover, if $C'$ is another connected component of $F_y$ we have $[y]\in C\cap C'$ so $C=C'=F_y$ and $F_y$ is connected.
\vspace{2mm}

In order to prove $(c)$ recall that $\Gamma$ and $Y$ are irreducible and $\pi_2:\Gamma\to Y$ is surjective. Then, for all $[y]\in Y$ we have
$$\dim(\pi_2^{-1}([y]))\geq \dim(\Gamma)-\dim(Y).$$
Since $\dim(\pi_2^{-1}([y]))=\dim F_y$ by definition of $F_y$ and since $\dim(\Gamma)\geq \dim(\bP(R^1))=n$ by hypothesis we get claim $(c)$.
\vspace{2mm}

For the last point $(d)$, fix $[y]\in Y$. Assume, by contradiction, that $\dim(F_y)=n$, i.e. $F_y=\bP(R^1)$. Then, for all $x\in R^1$, we have $x^ky=0$. Since $k$-th powers of elements in $R^1$ generates $R^k$ (since $R$ is a standard algebra) we have that $y\cdot R^k=0$. But this is impossible since $R$ is Gorenstein and $R^1\times R^k\to R^{k+1}$ is non-degenerate. This proves that $\dim(F_y)\leq n-1$. Using $(c)$ we also get that $\dim(Y)\geq 1$. By $(a)$ we have $\dim(Y)\leq \dim(Y_{k+1})<n$ so $\dim(Y)\leq n-1$. Then, using again $(c)$, we obtain $\dim(F_y)\geq 1$ as claimed. 
\end{proof}

In the next subsections our aim will be to improve the inequalities in point $(d)$ of Proposition \ref{PROP:DIMY} about the dimensions of $Y$ and of $F_y$ for $y$ general.

% ---------------------------------------------------------------
% ---------------------------------------------------------------
% ---------------------------------------------------------------

\subsection{The case $k=N-2$.}
\label{SUBSEC:N-2}
$\,$\\

From now on, we will assume $k=N-2$. Under this assumption, in this subsection, we prove a key result by constructing the dual variety of $Y$. As consequences, we will show that $Y$ is not linear and that its dimension is at most $n-2$. 
\vspace{2mm}

Consider the maps
$$\varphi:R^1\to R^{N-1} \quad x \mapsto x^{N-1}\quad \mbox{ and }\quad \psi:\bP(R^1)\setminus Y_{N-1}\to \bP(R^{N-1}) \quad [x] \mapsto [x^{N-1}].$$ 

Set $Z:=\overline{\psi(\bP(R^1)\setminus Y_{N-1})}$: this subvariety of $\bP(R^{N-1})$ is irreducible and non-degenerate (since $R$ is a standard algebra). For all $[x]\in\bP(R^1)$, let us define $K_x$ as the kernel of the differential
$$d_{[x]}\psi:T_{\bP(R^1),[x]}\rightarrow T_{Z,[x^{N-1}]}\qquad w\mapsto (d_{[x]}\psi)(w)=(N-1)x^{N-2}w$$
i.e. $K_x=\ker(d_{[x]}\psi)$. 
Set $\Delta$ to be the diagonal of $\bP(R^1)\times \bP(R^1)$. Whenever $p=([x],[y])\not \in \Delta$ we set 
\begin{equation}
L_p=\{[\lambda x+\mu y]\in \bP(R^1) \ | \ (\lambda:\mu)\in\bP^1\}.
\end{equation}
We have the following:

\begin{lemma}
For $p=([x],[y])\in \Gamma$ general, the line $L_p$ is contracted by $\psi$. Moreover, we have $\dim(Z)=n-\dim(K_x)\leq n-1$.
\end{lemma}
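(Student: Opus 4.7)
The plan is to derive both parts of the lemma from the Gorenstein--Gordan--Noether identity (Corollary \ref{COR:GGN}), specialized to $k = N-2$, which reads $(\lambda x + \mu y)^{N-1} = \lambda^{N-1} x^{N-1}$ in $R^{N-1}$ for every $([x],[y]) \in \Gamma$ and every $(\lambda : \mu) \in \bP^1$.

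For the contraction of $L_p$, I would first observe that for general $p = ([x],[y]) \in \Gamma$ we have $x^{N-1} \neq 0$: indeed, the dominance of $\pi_1$ together with $Y_{N-1} \subsetneq \bP(R^1)$ (Remark \ref{REM:HP}) confines $\pi_1^{-1}(Y_{N-1}) \cap \Gamma$ to a proper closed subvariety of $\Gamma$. The identity above then shows that $(\lambda x + \mu y)^{N-1}$ is a scalar multiple of $x^{N-1}$, vanishing if and only if $\lambda = 0$. Hence $\psi$ is defined on $L_p \setminus \{[y]\}$ and takes there the constant value $[x^{N-1}]$, which is exactly the contraction statement.

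For the dimension formula, $\psi$ is a dominant rational map from $\bP(R^1)$ (of dimension $n$) onto $Z$, so by generic smoothness (valid in characteristic zero), for $[x]$ in a dense open subset the differential $d_{[x]}\psi$ surjects onto $T_{Z, \psi([x])}$; rank-nullity then gives $\dim Z = n - \dim K_x$. To upgrade this to $\dim Z \leq n-1$ it suffices to show $K_x \neq 0$ for general $[x]$, and the contraction just proved does exactly this: the tangent direction to $L_p$ at $[x]$, represented by the image of $y$ in $R^1/\langle x \rangle$, lies in $K_x$, and it is nonzero because $[y] \neq [x]$ (otherwise $y = \lambda x$ would force $\lambda x^{N-1} = x^{N-2} y = 0$, contradicting $x^{N-1} \neq 0$).

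I do not expect a real obstacle here: both claims reduce to bookkeeping on top of the ker-coker principle already proved in Proposition \ref{PROP:CONE}. The only care is to synchronize the several genericity conditions---that $[x]$ avoid $Y_{N-1}$, that a companion $[y]$ with $([x],[y]) \in \Gamma$ exist, and that generic smoothness of $\psi$ applies---all of which hold on a common dense open subset of $\bP(R^1)$.
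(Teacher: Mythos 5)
Your proof is correct and follows essentially the same route as the paper's: both establish $x^{N-1}\neq 0$ for general $p$ via the dominance of $\pi_1$ and $Y_{N-1}\subsetneq\bP(R^1)$, derive the contraction of $L_p$ from the Gorenstein--Gordan--Noether identity, obtain $\dim(Z)=n-\dim(K_x)$ from smoothness of $Z$ at a general image point together with rank--nullity, and conclude $\dim(K_x)\geq 1$ from the tangent direction $\langle y\rangle\subseteq K_x$ of the contracted line (noting $p\notin\Delta$). The synchronization of genericity conditions you flag is handled identically in the paper, so there is no gap.
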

\begin{proof}
Let $p=([x],[y])\in \Gamma$ be general so we can assume that $x^{N-1}\neq 0$, i.e. $[x]\not\in Y_{N-1}$ (in particular, we have also $p\not \in \Delta$). Indeed, if we had $x^{N-1}=0$ for $p\in \Gamma$ general, then $\Gamma\subseteq Y_{N-1}\times Y$ and thus $Y_{N-1}=\bP(R^1)$ by $(\star)$. This is impossible by Proposition \ref{PROP:DIMY}(a). 

By using the Gorenstein-Gordan-Noether identity (see Corollary \ref{COR:GGN}) we have
$$\psi([\lambda x+\mu y])=[(\lambda x+\mu y)^{N-1}]=[\lambda^{N-1} x^{N-1}]=[x^{N-1}]=\psi([x])$$
so the line $L_p$ is contracted by $\psi$ (more precisely, $L_p\setminus Y_{N-1}$ is contracted to a point by $\psi$).
\vspace{2mm}

If we assume that $[z]:=\psi([x])=[x^{N-1}]\in Z_{smooth}$, then we have $\dim(Z)=\dim(T_{Z,[z]})$ so 
\begin{equation}
\label{EQ:DimZ}
\dim(Z)=\dim(\bP(R^{N-1}))-\dim(K_x)=n-\dim(K_x).
\end{equation}
Since $L_p$ is contracted by $\psi$ and $[x]\in L_p$ is not in $Y_{N-1}$ we have that $T_{L_p,[x]}=\langle y\rangle \subseteq K_x$ so $\dim(K_x)\geq 1$ and we have the claim.
\end{proof}

Recall that via Gorenstein duality we have a linear isomorphism $R^1\simeq (R^{N-1})^*$ which induces an isomorphism $\bP(R^{N-1})^*\simeq \bP((R^{N-1})^*)\simeq \bP(R^1)$. If $H\in \bP(R^{N-1})^*$ and $\alpha\in \bP((R^{N-1})^*)$ correspond under the first isomorphism, we have that the hyperplane $H$ contains a linear variety $\bP(W)\subseteq \bP(R^{N-1})$ if and only if $\alpha$, a linear form on $R^{N-1}$, annihilates all the vectors in $W$, i.e. $\alpha\in \bP(\Ann_{R^{1}}(W))$.
\vspace{2mm}

Let $X$ be a proper projective subvariety of $\bP^n$ and assume that $[x]\in X_{smooth}$. We will denote with $(\bP^n)^*$ the dual projective space of $\bP^n$ (i.e. the projective variety parametrizing the hyperplanes of $\bP^n$) and with $T_{[x]}(X)$ the projective tangent space to $[x]$ in $X$. If $\tilde{X}\subseteq \bK^{n+1}$ is the affine cone associated to $X$ we have $T_{[x]}(X)=\bP(T_{\tilde{X},x})$. We recall that the dual variety of $X$ (as subvariety of $\bP^n$) is
$$X^*=\overline{\{H\in (\bP^n)^*\quad |\quad \exists\, [x]\in X_{smooth}\, \mbox{ such that }\, T_{[x]}(X)\subseteq H\}}.$$

One of the key results of this subsection is the following:

\begin{proposition}
\label{PROP:DUAL}
We have $Y=Z^*$.
\end{proposition}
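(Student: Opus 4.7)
The plan is to verify both inclusions $Y\subseteq Z^*$ and $Z^*\subseteq Y$ separately, based on a single translation via the Gorenstein duality recalled just before the statement. Under the identification $\bP(R^{N-1})^{*}\simeq \bP(R^1)$, the point $[y]\in\bP(R^1)$ corresponds to the hyperplane $H_y=\{[z]\in\bP(R^{N-1}) : yz=0\text{ in }R^N\}$. For any $[x]$ with $x^{N-1}\neq 0$ and $[x^{N-1}]\in Z_{\mathrm{smooth}}$, the projective tangent space $T_{[x^{N-1}]}(Z)$ equals the image of $d_{[x]}\psi$, whose affine cone is $x^{N-2}\cdot R^1\subseteq R^{N-1}$ (note that $x^{N-1}$ itself lies in this subspace). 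Hence the containment $T_{[x^{N-1}]}(Z)\subseteq H_y$ amounts to $y\cdot x^{N-2}\cdot R^1=0$, which by non-degeneracy of the Gorenstein pairing $R^1\times R^{N-1}\to R^N$ is equivalent to $x^{N-2}y=0$. This dictionary between vanishing relations and incidence in the dual projective space is the key to the whole argument.

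For $Y\subseteq Z^*$, I would take a general $([x],[y])\in \Gamma$. The lemma just proved gives $x^{N-1}\neq 0$, and dominance of $\pi_1|_\Gamma$ lets us pick $[x]$ general enough that $[x^{N-1}]$ is a smooth point of $Z$. Since $x^{N-2}y=0$ holds by definition of $\Gamma\subseteq \Gamma_{N-2}$, the dictionary yields $[y]\in Z^*$. As $\pi_2(\Gamma)=Y$ and $Z^*$ is closed, we conclude $Y\subseteq Z^*$.

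For $Z^*\subseteq Y$, I would work from the $[x]$-side rather than the $[y]$-side. For $[x]$ in a dense open of $\bP(R^1)$, the fiber $\pi_1^{-1}([x])\cap \Gamma_{N-2}=\{[x]\}\times \bP(\Ann_{R^1}(x^{N-2}))$ is non-empty by $(\star)$ and irreducible; since $\Gamma$ is the unique component of $\Gamma_{N-2}$ that dominates $\bP(R^1)$ via $\pi_1$, this fiber coincides with $\pi_1^{-1}([x])\cap \Gamma$. Consequently every $[y]$ annihilating $x^{N-2}$ already lies in $Y$. Shrinking if necessary so that also $[x^{N-1}]\in Z_{\mathrm{smooth}}$, the dictionary identifies $\bP(\Ann_{R^1}(x^{N-2}))$ with the linear subspace $T_{[x^{N-1}]}(Z)^\perp\subseteq \bP(R^1)$. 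As $[x]$ varies over this dense open, $[x^{N-1}]$ sweeps out a dense open of $Z$, so the union of the conormal linear spaces $T_{[z]}(Z)^\perp$ over $[z]\in Z_{\mathrm{smooth}}$ is contained in $Y$; this union is by definition dense in $Z^*$, whence $Z^*\subseteq \overline{Y}=Y$.

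The main obstacle is precisely the component-tracking in the second direction: the conormal incidence of $Z$ sits inside $\Gamma_{N-2}$, and a priori one cannot immediately rule out that it lands in a spurious component rather than in $\Gamma$. The remedy is the uniqueness of the dominating component established earlier, combined with the irreducibility of the generic fiber of $\pi_1|_{\Gamma_{N-2}}$; these together pin the conormal lift down to $\Gamma$ and let the argument go through.
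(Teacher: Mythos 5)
Your proof is correct and takes essentially the same route as the paper's: the same Gorenstein-duality dictionary ($H_y\supseteq T_{[x^{N-1}]}(Z)\Leftrightarrow x^{N-2}y=0$ via the perfect pairing $R^1\times R^{N-1}\to R^N$), the same tangent-space description $T_{[z]}(Z)=\bP(x^{N-2}\cdot R^1)$, and the same appeal to the uniqueness of the dominating component (Lemma \ref{LEM:UniqueGamma}) to keep $[x]$ away from $\overline{\pi_1(\Gamma_{N-2}\setminus\Gamma)}$. The only difference is cosmetic: for $Z^*\subseteq Y$ you sweep the conormal spaces over general points of $Z$, while the paper fixes a general $H\in Z^*$ and lifts a tangency point, and both versions rest on the same genericity bookkeeping.
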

\begin{proof}
First of all, notice that if $[z]=[x^{N-1}]=\psi([x])\in Z_{smooth}$, we have that the tangent (projective) space to $Z$ in $[z]$ is described as
$$T_{[z]}(Z)=\bP(x^{N-2}\cdot R^1)=\bP\left(\{wx^{N-2}\,|\, w \in R^1\}\right).$$

Assume that $[y]\in Y$ is a general point. We claim that $y\in Z^*$. Since $[y]\in Y$ is general, we can take $([x],[y])\in \Gamma$ such that $x^{N-1}\neq 0$ and $[z]=[x^{N-1}]$ is a smooth point of $Z$. In particular $x^{N-2}y=0$ so $[y]\in \Ann_{R^1}(x^{N-2}\cdot R^1)$. Hence, by the above considerations, the hyperplane $H$ of $\bP(R^{N-1})$ corresponding to $[y]$ contains $T_{[z]}(Z)$ so $[y]\in Z^*$. Since $[y]$ was general in $Y$, we have proved $Y\subseteq Z^*$.
\vspace{2mm}

For the other inclusion, let $H$ be a general element in $Z^*$. Let $[y]\in \bP(R^1)$ be its corresponding point. Since $H\in Z^*$ (and $H$ is general) we have that there exists $[z]=[x^{N-1}]\in Z_{smooth}$ such that $H$ contains the tangent (projective) space $T_{[z]}(Z)$. Then, equivalently, $y$ annihilates $x^{N-2}\cdot R^1$. On the other hand, since the product $R^1\times R^{N-1}\to R^{N}$ is a perfect pairing, having $x^{N-2}wy=0$ for all $w\in R^1$ implies that $x^{N-2}y=0$ so $([x],[y])\in \Gamma_{N-2}$. Since $H$ was generic in $Z^*$ and, by Lemma \ref{LEM:UniqueGamma}, $\Gamma$ is the only component of $\Gamma_{N-2}$ which dominates $\bP(R^1)$ via $\pi_1$, we can assume that $[x]$ is outside $\overline{\pi_1(\Gamma_{N-2}\setminus \Gamma)}$. Then, we have that $([x],[y])\in \Gamma$ so $[y]\in Y$ as claimed.
\end{proof}

\begin{corollary}
\label{COR:NOLINEAR}
The variety $Y\subset \bP(R^1)$ cannot be linear.
\end{corollary}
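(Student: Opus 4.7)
The approach is to combine Proposition \ref{PROP:DUAL} with the biduality (reflexivity) theorem for projective duality, which holds in characteristic zero. I would argue by contradiction: suppose $Y \subset \bP(R^1)$ is a linear subvariety; by Proposition \ref{PROP:DIMY}(d) one has $1 \le \dim Y \le n-1$, so $Y$ is a proper linear subspace of positive dimension.

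The first step is to transport linearity from $Y$ back to $Z$. By Proposition \ref{PROP:DUAL}, $Y = Z^*$. Since the base field has characteristic zero, the Monge--Segre--Wallace reflexivity theorem applied to the irreducible projective variety $Z \subset \bP(R^{N-1})$ yields $Z = (Z^*)^* = Y^*$. Because $Y$ is linear, its projective dual $Y^*$ is itself a linear subspace of $\bP(R^{N-1})$, of dimension $n - \dim Y - 1$, under the Gorenstein duality identification $\bP(R^{N-1})^* \cong \bP(R^1)$ used throughout the section. Consequently $Z$ is linear.

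The second step is to derive a contradiction from $Z$ being linear. On the one hand, $Z$ is non-degenerate in $\bP(R^{N-1})$: since $R$ is a standard $\bK$-algebra, the $(N-1)$-th powers of elements of $R^1$ span $R^{N-1}$, so the image of $\psi$ is not contained in any hyperplane. A non-degenerate linear subvariety of $\bP(R^{N-1})$ must be the whole ambient space, forcing $\dim Z = n$. On the other hand, the lemma preceding Proposition \ref{PROP:DUAL} already gives $\dim Z \le n-1$, because for a general $([x],[y]) \in \Gamma$ the tangent direction of $y$ lies in $K_x$, so $\dim K_x \ge 1$. These two bounds on $\dim Z$ are incompatible, and the contradiction closes the argument.

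There is essentially no serious obstacle once Proposition \ref{PROP:DUAL} is available; the only external ingredient is the biduality theorem for projective duality, which is standard over an algebraically closed field of characteristic zero. One could alternatively bypass reflexivity by showing directly that every smooth $[z] = [x^{N-1}] \in Z$ has $T_{[z]}Z$ contained in the hyperplane of $\bP(R^{N-1})$ dual to $Y$ (using $x^{N-2} y = 0$ for general $[y] \in Y$), then concluding that $Z$ lies in a linear variety of dimension $n - \dim Y - 1 < n$, contradicting non-degeneracy -- but biduality packages this most cleanly.
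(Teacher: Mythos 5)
Your proposal is correct and follows essentially the same route as the paper: contradiction via Proposition \ref{PROP:DUAL} together with reflexivity ($Z=Z^{**}$, valid in characteristic $0$), so that $Z=Y^*$ is linear, contradicting the non-degeneracy of $Z$. The only cosmetic difference is how the final contradiction is phrased — the paper notes that a proper linear $Z$ is degenerate, while you equivalently play non-degeneracy against the bound $\dim Z\leq n-1$ from the preceding lemma.
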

\begin{proof}
Let us suppose by contradiction that $Y$ is a proper linear subvariety of $\bP(R^1)$. Since $\bK$ is a field of characteristic $0$, $Z$ is reflexive (see \cite{Wal56,Kle86}) so $Z=Z^{**}$. From Proposition \ref{PROP:DUAL}, we have that $Y=Z^*$ and so $Y^*=Z$.
Since we are assuming that $Y$ is linear, we have that also $Y^*$ is linear: namely, it is the linear subspace of $\bP(R^1)$ of the hyperplanes containing $Y$, which is proper. Then $Z$ is linear and thus degenerate, against the hypothesis.
\end{proof}

% ----------------------------------------

We are now going to improve the inequalities $(d)$ in Proposition \ref{PROP:DIMY}, by showing that $Y$ cannot be an hypersurface of $\bP(R^1)$ (see Corollary \ref{COR:DIMY2}). 
\vspace{2mm}

Let $p=([x],[y])\in \Gamma$ with $x\neq y$. As above, we will denote by $L_p$ the line in $\bP(R^1)$ joining the points $x$ and $y$, i.e. the line 
$L_p=\{\lambda x+\mu y\,|\, (\lambda:\mu)\in \bP^1\}$.

\begin{lemma} 
\label{LEM:LINE}
Let $p=([x],[y])\in \Gamma$ such that $x^{N-1}\neq 0$. Then
\begin{enumerate}[(a)]
\item $L_p\cap Y=[y]$;
\item if $p$ is general, $L_p$ is not tangent to $Y$ at $[y]$.
\end{enumerate}
\end{lemma}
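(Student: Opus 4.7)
The plan is to prove (a) by a direct application of the Gorenstein--Gordan--Noether identity of Corollary \ref{COR:GGN}, and to prove (b) by a first-order deformation argument that linearises the defining equation $x^{N-2}y=0$ of $\Gamma_{N-2}$ at $p$.

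For (a), I would argue by contradiction: suppose there exists $[q]\in L_p\cap Y$ with $[q]\neq[y]$. Then $q=\lambda x+\mu y$ with $\lambda\neq 0$, and Corollary \ref{COR:GGN} (applied with $k=N-2$) gives
$$q^{N-1}=(\lambda x+\mu y)^{N-1}=\lambda^{N-1}x^{N-1}\neq 0,$$
since $x^{N-1}\neq 0$ by hypothesis. On the other hand, Proposition \ref{PROP:DIMY}(a) yields $[q]\in Y\subseteq Y_{N-1}$, so $q^{N-1}=0$. This contradiction proves (a).

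For (b), again arguing by contradiction, assume $p=([x],[y])$ is general in $\Gamma$ and that $L_p$ is tangent to $Y$ at $[y]$. Since the dominant morphism $\pi_2\colon\Gamma\to Y$ is generically smooth in characteristic zero, I may assume that $p$ is a smooth point of $\Gamma$ and that $d_p\pi_2$ is surjective onto $T_{[y]}Y$. The tangent direction of $L_p$ at $[y]$ is represented by $x\in R^1$ (modulo $\langle y\rangle$), so it lifts to a tangent vector of $\Gamma$ at $p$ with affine representatives $(\dot\alpha,\dot\beta)=(v,x)$ for some $v\in R^1$. Writing $\alpha(t)=x+tv+O(t^2)$, $\beta(t)=y+tx+O(t^2)$, the identity $\alpha(t)^{N-2}\beta(t)=0$ valid on $\Gamma_{N-2}$ expands as
$$x^{N-2}y+t\bigl[x^{N-1}+(N-2)x^{N-3}vy\bigr]+O(t^2)=0,$$
so the linearisation at $p$ in the direction $(v,x)$ gives the relation
$$(N-2)\,x^{N-3}vy+x^{N-1}=0\qquad\text{in }R^{N-1}.$$
Multiplying by $x$ yields $(N-2)\,x^{N-2}vy+x^N=0$ in $R^N$, and commutativity together with $x^{N-2}y=0$ makes the first summand vanish, leaving $x^N=0$, i.e.\ $[x]\in Y_N$. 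But by Remark \ref{REM:HP} one has $Y_N\subsetneq\bP(R^1)$, and the dominance of $\pi_1$ (assumption $(\star)$) together with the generality of $p$ forces $[x]=\pi_1(p)$ to be general in $\bP(R^1)$, hence $[x]\notin Y_N$. This is the required contradiction.

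The main delicate point is the lifting argument in part (b): one needs a tangent vector in $T_p\Gamma$ whose projection under $d_p\pi_2$ is the prescribed direction $x\in T_{[y]}Y$. This rests on the generic smoothness (in characteristic zero) of the dominant morphism $\pi_2\colon\Gamma\to Y$, which ensures the surjectivity of $d_p\pi_2$ at a generic smooth point $p$ of $\Gamma$; once the lift is produced, the algebra that derives $x^N=0$ is completely elementary.
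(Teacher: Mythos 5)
Your proposal is correct and takes essentially the same approach as the paper: part (a) combines the Gorenstein--Gordan--Noether identity with the inclusion $Y\subseteq Y_{N-1}$, and part (b) lifts the tangent direction $x$ of $L_p$ through the surjective differential $d_p\pi_2$, expands $\alpha(t)^{N-2}\beta(t)=0$ to first order, and multiplies by $x$ to obtain $x^N=0$, contradicting the generality of $[x]$. The only cosmetic difference is that the paper assumes $x^N\neq 0$ up front as part of the generality of $p$, whereas you derive the contradiction at the end via $Y_N\subsetneq\bP(R^1)$ (Remark \ref{REM:HP}) and the dominance of $\pi_1$.
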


\begin{proof}
Let $p=([x],[y])$ be as in the hypothesis and let us consider the line $L_p$. By Proposition \ref{PROP:CONE} we have that points in $Y$ satisfy $y^{N-1}=0$. Then
$$[y]\in L_p\cap Y \subseteq L_p\cap Y_{N-1}=\{[\lambda x+\mu y]\quad|\quad (\lambda:\mu)\in \bP^1,\, (\lambda x+\mu y)^{N-1}=0\}$$
On the other hand, the Gorenstein-Gordan-Noether identity (see Corollary \ref{COR:GGN}) yields $(\lambda x+\mu y)^{N-1}=\lambda^{N-1}x^{N-1}$. This is zero if and only if $\lambda=0$, so $L_p\cap Y=[y]$ as claimed.
\vspace{2mm}

Take $p=([x],[y])\in \Gamma$ general. Then, we can assume that $x^N\neq 0$ (since $R$ is a standard $\bK$-algebra - see Remark \ref{REM:HP}), that $p$ is a smooth point for $\Gamma$ and that the differential $d_p\pi_2:T_{\Gamma,p}\to T_{Y,[y]}$ is surjective.
\vspace{2mm}

Assume by contradiction that $L_p$ meets $Y$ non-transversely. Since the tangent in $[y]$ to $L_p$ is spanned by $x$ and since $d_p\pi_2$ is surjective, we have that there exists a tangent vector of the form $(v,x)$ in $T_{\Gamma,p}$. 
Hence, there is a curve $\gamma(t)$ in $\Gamma$, that we can write as $\gamma(t)=(\alpha(t),\beta(t))$, passing at $t=0$ through the point $p=([x],[y])$ and such that $\alpha'(0)=v$ and $\beta'(0)=x$.
As $\gamma$ has image in $\Gamma$, we have that $\alpha$ and $\beta$ satisfy the relation $\alpha(t)^{N-2}\beta(t)=0$. By considering the expansion of this relation as in Proposition \ref{PROP:CONE} we obtain the equation
$$(N-2)x^{N-3}vy+x^{N-1}=0.$$
If we multiply by $x$ we get $x^{N}=0$ which is impossible since we are assuming $x^N\neq 0$. Then $L_p$ and $Y$ meet transversely.
\end{proof}

\begin{corollary} 
\label{COR:DIMY2}
We have $$1\leq \dim(Y)\leq n-2\qquad \mbox{ and }\qquad 2\leq \dim(F_y)\leq n-1$$ for any $p=([x],[y])\in \Gamma$.
\end{corollary}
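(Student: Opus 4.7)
The plan is to reduce this corollary to the preceding lemmas. The upper bound $\dim(F_y) \leq n-1$ and the lower bound $\dim(Y) \geq 1$ are already established in Proposition \ref{PROP:DIMY}(d). Moreover, once we show $\dim(Y) \leq n-2$, the lower bound $\dim(F_y) \geq 2$ will follow immediately from Proposition \ref{PROP:DIMY}(c), or equivalently from the theorem on fiber dimensions applied to the dominant morphism $\pi_2 \colon \Gamma \to Y$, which yields $\dim F_y \geq \dim \Gamma - \dim Y \geq n - (n-2) = 2$ for every $[y] \in Y$. Hence the whole statement reduces to proving $\dim(Y) \leq n-2$.

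To this end, I will argue by contradiction, assuming that $Y$ is a hypersurface of $\bP(R^1)$, and combine Lemma \ref{LEM:LINE} with Bezout's theorem to contradict Corollary \ref{COR:NOLINEAR}. By Corollary \ref{COR:NOLINEAR}, $Y$ cannot be linear, so under the hypersurface assumption $d := \deg(Y) \geq 2$. Choose a general point $p = ([x],[y]) \in \Gamma$; by genericity, we may assume that $x^{N-1} \neq 0$, that $[y]$ lies in the smooth locus of $Y$ (legitimate because $\pi_2 \colon \Gamma \to Y$ is dominant and the smooth locus is a dense open subset of $Y$), and that both conclusions of Lemma \ref{LEM:LINE} apply to $p$.

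Consider the line $L_p$. By Lemma \ref{LEM:LINE}(a), $L_p \cap Y = \{[y]\}$ set-theoretically, so in particular $L_p \not\subset Y$; Bezout's theorem applied to $L_p$ and the hypersurface $Y$ therefore forces the intersection multiplicity of $L_p$ with $Y$ at $[y]$ to equal $d \geq 2$. On the other hand, Lemma \ref{LEM:LINE}(b) says that $L_p$ is not tangent to $Y$ at $[y]$, and since $[y]$ is a smooth point of the hypersurface $Y$ this means $L_p$ meets $Y$ transversely at $[y]$, forcing the intersection multiplicity there to be exactly $1$. This contradicts $d \geq 2$ and completes the proof. The only point requiring a moment of care is the reduction to a smooth $[y] \in Y$, which is standard given the dominance of $\pi_2$; the rest is a direct application of Bezout combined with the transversality statement of Lemma \ref{LEM:LINE}(b).
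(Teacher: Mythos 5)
Your proof is correct and takes essentially the same approach as the paper: reduce everything to showing $\dim(Y)\leq n-2$ via Proposition \ref{PROP:DIMY}, assume by contradiction that $Y$ is a hypersurface, and use Lemma \ref{LEM:LINE} to produce a line meeting $Y$ transversely in a single point, which by Bezout forces $Y$ to be a hyperplane, contradicting Corollary \ref{COR:NOLINear}. The paper states this more tersely (``so $Y$ is an hyperplane''), while you spell out the intersection-multiplicity argument and the genericity/smoothness reductions that the paper leaves implicit.
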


\begin{proof}
Assume by contradiction that $\dim(Y)=n-1$, so $Y$ is an irreducible hypersurface in $\bP(R^1)$. By Lemma \ref{LEM:LINE} there is a line which meets $Y$ transversely in one point, so $Y$ is an hyperplane. On the other hand, by Corollary \ref{COR:NOLINEAR}, $Y$ cannot be linear so we have a contradiction.
\end{proof}

% ---------------------------------------------------------------
% ---------------------------------------------------------------
% ---------------------------------------------------------------

\subsection{Proof of Theorem B}
$\,$\\

Our aim is now to prove the following:

\begin{proposition}
\label{PROP:MAININEQ}
If $\dim(Y)= 1$ then $n\geq 4$. 
\end{proposition}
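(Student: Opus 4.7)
The strategy is to argue by contradiction, assuming $n \leq 3$. The dimension bound $\dim Y \leq n-2$ of Corollary~\ref{COR:DIMY2} combined with $\dim Y = 1$ forces $n \geq 3$, so it is enough to rule out $n = 3$. Under that hypothesis $\bP(R^1) \cong \bP^3$, $Y \subset \bP^3$ is an irreducible non-linear curve, and $F_y$ is a surface cone with vertex $[y]$ of dimension $2$ (by Corollary~\ref{COR:DIMY2}), so $\dim \Gamma = \dim F_y + \dim Y = 3$.

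The key step is to establish that, for a general $[y] \in Y$, the tangent line $T_{[y]} Y$ lies inside $F_y$. I would choose a local parameterisation $[y(t)] \subset Y$ with $[y(0)] = [y]$, set $y_1 = y'(0)$, and differentiate the identity $y(t)^{N-1} \equiv 0$ (which holds because $Y \subseteq Y_{N-1}$) to obtain $y^{N-2} y_1 = 0$. Since $[y] \in F_y$ as the vertex of the cone (compare the proof of Proposition~\ref{PROP:DIMY}(b)), the diagonal point $([y],[y])$ lies in $\Gamma$, and a tangent-space computation there---using the linearisation of the defining equation $x^{N-2} y = 0$ together with the fact that the $\pi_2$-component of any tangent vector to $\Gamma$ must be tangent to $Y$---produces the tangent vector $(y_1, 0)$ inside $T_{([y],[y])} \Gamma$. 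Projecting via $\pi_1$, this means that $[y_1]$ is a tangent direction to $F_y$ at its vertex $[y]$, and the cone structure then forces the whole line $\overline{[y][y_1]} = T_{[y]} Y$ into $F_y$.

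Next I would bootstrap with the Gorenstein-Gordan-Noether identity of Corollary~\ref{COR:GGN}: applied to $([y_1],[y]) \in \Gamma$ it yields $y_1^{i}\, y^{j} = 0$ for $i+j = N-1$ and $j \geq 1$. Combining these with the second derivative of $y(t)^{N-1} \equiv 0$, which reads $y^{N-2} y_2 + (N-2)\, y^{N-3} y_1^{2} = 0$, forces $y^{N-2} y_2 = 0$; the same tangent-space argument, now applied to the direction $(y_2, 0)$, places $[y_2]$---and therefore the whole line $\overline{[y][y_2]}$---into $F_y$. Iterating with the higher-order derivatives $y_k = y^{(k)}(0)$ of $[y(t)]$, the entire osculating flag of $Y$ at $[y]$ is pushed inside $F_y$.

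The proof would then be closed by a dimension count. If $Y$ is non-degenerate in $\bP^3$, its third-order osculating space at a general point equals $\bP^3$, so $F_y = \bP^3$, contradicting $\dim F_y = 2$. If instead $Y$ lies in a plane $\Pi$, the iteration only reaches $\Pi$; but $\Pi \subseteq F_y$ combined with $\dim F_y = \dim \Pi = 2$ and the irreducibility of $\Gamma$ forces $F_y = \Pi$ for every general $[y] \in Y$, so $\pi_1(\Gamma) \subseteq \Pi \subsetneq \bP(R^1)$, contradicting the surjectivity assumption $(\star)$. I expect the hardest part to be controlling the iteration: at each stage one has to verify that the point $([y_k],[y])$ produced from the $k$-th derivative lies in the dominant component $\Gamma$ and not merely in $\Gamma_{N-2}$, which should follow by combining the tangent-space analysis above with the GGN relations obtained at the earlier stages.
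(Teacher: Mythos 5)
Your overall skeleton (reduce to $n=3$, then split into the cases $Y$ non-degenerate / $Y$ planar, and contradict $(\star)$ in the planar case) parallels the paper's, but the central device you use --- pushing the tangent line and then the osculating flag of $Y$ into $F_y$ --- has genuine gaps. First, you cannot produce $(y_1,0)\in T_{([y],[y])}\Gamma$ by linearising $x^{N-2}y=0$: satisfying the linearised equation only places a vector in the Zariski tangent space of the scheme $\Gamma_{N-2}$, not in that of the distinguished component $\Gamma$, and the diagonal point $([y],[y])$ is exactly the kind of special point where the difference matters. In the Perazzo example of Section \ref{SEC:5} one has $\Gamma\cap\tau(\Gamma)=Y\times Y$, so every such diagonal point is a singular point of $\Gamma_{N-2}$ lying on two dominating-type components; no linearisation can tell you which tangent vectors belong to $\Gamma$. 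This is why every tangent-space argument in the paper (Proposition \ref{PROP:CONE}, Lemma \ref{LEM:LINE}(b), Proposition \ref{PROP:ALTERNATIVA}) is run at a \emph{general} point of $\Gamma$, where smoothness and surjectivity of $d\pi_2$ may be assumed and tangent vectors are obtained by integrating actual curves inside $\Gamma$. Second, even granting a tangent vector to $F_y$ at $[y]$, the inference ``$[y_1]$ is a tangent direction at the vertex, hence $\overline{[y][y_1]}\subseteq F_y$'' is false: the vertex of a cone is in general a singular point whose Zariski tangent space is strictly larger than the cone (for a quadric cone in $\bP^3$ it is all of $\bP^3$). Third, even if every line $\overline{[y][y_k]}$ did lie in $F_y$, your conclusion needs the \emph{linear spans}: $F_y$ is not linear, and a $2$-dimensional cone can contain finitely many (indeed infinitely many) lines through its vertex spanning all of $\bP^3$, so neither ``$F_y=\bP^3$'' in the non-degenerate case nor ``$\Pi\subseteq F_y$'' in the planar case follows. (A smaller issue: the relation $y^{N-3}y_1^2=0$ is a GGN relation only when $N-3\geq 1$, so your bootstrap already fails for $N=3$.)

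The paper's proof avoids all of this by first noting that $\dim Y=1$ forces $\dim F_y=n-1$ for all $y$ (Proposition \ref{PROP:DIMY}), and then invoking Proposition \ref{PROP:DIMY3}, whose engine is Proposition \ref{PROP:ALTERNATIVA}: the \emph{whole curve} $Y$ lies in every $F_y$, and $\Sec(Y)\subseteq Y_{N-1}$. This is proved at general, off-diagonal points $([x],[y_1])\in\Gamma$ using the non-degeneracy of the Gorenstein pairing, and it is the global statement that actually fills up dimensions: since $F_y$ is a cone with vertex $[y]$ containing $Y$, one gets a one-parameter family of lines (the cone over $Y$) inside $F_y$, which is what the secant-variety count (non-degenerate case) and the identification $\Gamma=H\times Y$ contradicting $(\star)$ (planar case) require. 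To repair your argument you would essentially have to re-prove Proposition \ref{PROP:ALTERNATIVA}, replacing the infinitesimal data $y_1,y_2,\dots$ at $[y]$ by actual second points of $Y$ shown to lie in $F_y$.
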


Before doing so, we will need two technical results.

\begin{proposition}
\label{PROP:ALTERNATIVA}
Assume that $F_y$ has dimension $n-1$ for all $y\in Y$. Then
\begin{enumerate}[(a)]
\item $Y\subseteq \bigcap_{y\in Y}F_y$;
\item $\mathrm{Sec}(Y)\subseteq Y_{N-1}$.
\end{enumerate}
\end{proposition}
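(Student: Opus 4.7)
My strategy is to prove (a) first and then deduce (b) from it using the Gorenstein-Gordan-Noether identity of Corollary \ref{COR:GGN}.

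The implication from (a) to (b) is immediate. If (a) holds, then for any $[y_1],[y_2]\in Y$ the pair $([y_1],[y_2])$ belongs to $\Gamma$, so Corollary \ref{COR:GGN} yields $(\lambda y_1+\mu y_2)^{N-1}=\lambda^{N-1}y_1^{N-1}$ in $R^{N-1}$ for every $(\lambda:\mu)\in\bP^1$. Since $Y\subseteq Y_{N-1}$ by Proposition \ref{PROP:DIMY}(a), $y_1^{N-1}=0$, hence $[\lambda y_1+\mu y_2]\in Y_{N-1}$. Varying $[y_1],[y_2]\in Y$ and taking closure gives $\Sec(Y)\subseteq Y_{N-1}$.

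For (a), the first step is to pin down the shape of $F_y$. By Proposition \ref{PROP:CONE}, every $([x],[y])\in\Gamma$ satisfies $xy^{N-2}=0$, so $F_y\subseteq \bP(\Ann_{R^1}(y^{N-2}))$. Assuming $y^{N-2}\neq 0$ for generic $[y]\in Y$ (the degenerate case $y^{N-2}\equiv 0$ on $Y$ will be handled in parallel by working with the smallest $m\geq 1$ such that $y^m\not\equiv 0$ on $Y$ and using the corresponding relation $x^{N-1-m}y^m=0$ of Proposition \ref{PROP:CONE}), Gorenstein duality together with the standardness of $R$ gives $\dim(y^{N-2}R^1)\geq 1$, so the containing linear subspace $\bP(\Ann_{R^1}(y^{N-2}))$ has dimension at most $n-1$. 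Since $\dim F_y=n-1$ by hypothesis, equality holds, and $F_y$ is the linear hyperplane $\bP(\Ann_{R^1}(y^{N-2}))$.

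The second step extracts tangential information: differentiating the identity $y(t)^{N-1}\equiv 0$ (valid because $Y\subseteq Y_{N-1}$) along an arc $y(t)\in Y$ with $y(0)=y$ produces $y^{N-2}y'(0)=0$, so $y'(0)\in \Ann_{R^1}(y^{N-2})$ and consequently $T_{[y]}(Y)\subseteq F_y$. The final and hardest step is to upgrade this infinitesimal inclusion to the global inclusion $Y\subseteq F_y$ for every $y\in Y$, equivalently to prove $Y\times Y\subseteq \Gamma$ as subsets of $\bP(R^1)\times\bP(R^1)$. My plan is to combine (i) the dimension identity $\dim\Gamma=\dim Y+n-1$ forced by the hypothesis, (ii) the diagonal inclusion $\{([y],[y]):y\in Y\}\subseteq\Gamma$ coming from $y^{N-1}=0$, (iii) the tangential inclusion just obtained, and (iv) the symmetric character of the ker-coker relations, namely $([x],[y])\in\Gamma\Rightarrow([y],[x])\in\Gamma_{N-2}$ via $xy^{N-2}=0$ in Proposition \ref{PROP:CONE}, to show that the closed subvariety $\Gamma\cap(Y\times Y)$ fills an open dense subset of the irreducible variety $Y\times Y$, and hence coincides with it. This globalization is the principal difficulty I anticipate.
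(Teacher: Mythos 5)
There is a genuine gap, and you name it yourself: the entire content of part (a) is the ``globalization'' step, and your proposal only offers a plan for it, not a proof. What you actually establish is (1) that for generic $[y]$ with $y^{N-2}\neq 0$ the set $F_y$ equals the hyperplane $\bP(\Ann_{R^1}(y^{N-2}))$, and (2) the infinitesimal inclusion $T_{[y]}(Y)\subseteq F_y$. But tangency at the base point cannot, by itself or in combination with the dimension count (i), the diagonal inclusion (ii), and the symmetry remark (iv), force $Y\subseteq F_y$: think of a smooth conic $Y$ in a plane with $F_y$ its tangent line at $[y]$ --- there $T_{[y]}(Y)\subseteq F_y$ and $([y],[y])$ lies on the incidence variety, yet $Y\not\subseteq F_y$. (Indeed, in the Perazzo example of Section \ref{SEC:5}, $Y$ \emph{is} a conic and the $F_y$ \emph{are} hyperplanes, so any successful argument must use more than tangency; nothing in your list of ingredients rules out the tangent-hyperplane configuration.) Since (b) in your scheme is deduced from (a), the whole proposition remains unproven. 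A secondary, subordinate gap: your parallel treatment of the degenerate case $y^{N-2}\equiv 0$ on $Y$ via the relation $x^{N-1-m}y^m=0$ does not give a \emph{linear} condition on $x$ when $N-1-m\geq 2$, so the identification of $F_y$ with a hyperplane breaks down there and the fallback is not actually sketched.

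For comparison, the paper's proof of (a) avoids any globalization of tangential data by an intersection argument with an auxiliary general point. Fix $[y]\in Y$ and choose $[x]\in\bP(R^1)$ general with $x^{N-1}y\neq 0$, $x^N\neq 0$, and $[x]\in\pi_1(\Gamma\setminus\Gamma^c)$; by dominance of $\pi_1$ there is a general $[y_1]\in Y$ with $([x],[y_1])\in\Gamma$. The line $L$ through $[x]$ and $[y_1]$ must meet $F_y$ because $\dim F_y=n-1$; but for any point $[x+ty_1]\in L\setminus\{[y_1]\}$, the ker-coker relations $x^iy_1^j=0$ ($j\geq 1$, $i+j=N-1$) of Proposition \ref{PROP:CONE} give $x(x+ty_1)^{N-2}=x^{N-1}$, so $(x+ty_1)^{N-2}y=0$ would imply $x^{N-1}y=0$, a contradiction. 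Hence $L\cap F_y=\{[y_1]\}$, so the general point $[y_1]$ of $Y$ lies in $F_y$, and closedness of $F_y$ with irreducibility of $Y$ gives $Y\subseteq F_y$. Your deduction of (b) from (a) via Corollary \ref{COR:GGN} is correct and matches the paper's, but it is conditional on the missing step.
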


\begin{proof}
Recall that $\Gamma^c$ denotes the union of all the irreducible components of $\Gamma_{N-2}$ different from $\Gamma$.
Let us take an element $[y]\in Y$ and fix $[x]\in\bP(R^1)$ general, satisfying the following assumptions:
$$x^{N-1}\cdot y\ne 0,\qquad  [x]\in \pi_1(\Gamma\setminus \Gamma^c) \qquad \mbox{and} \qquad x^N\ne0.$$
This can be done since $R$ is a standard algebra and since $\Gamma$ is the only component dominating $\bP(R^1)$ via $\pi_1$.
\vspace{2mm}

Since $\pi_1$ is dominant, there exists $[y_1]\in Y$ (which can be assumed general as for $[x]$), such that $p_1=([x],[y_1])\in \Gamma\setminus \Gamma^c$. In particular, we have $x^{N-2}y_1=0$ and $[y_1]\ne[y]$ since, otherwise, we would have that $x^{N-2}y=x^{N-2}y_1=0$ which gives a contradiction.
\vspace{2mm}

Let us now consider the line $L_{p_1}$, joining the points $[x]$ and $[y_1]$, i.e. 
$$L_{p_1}=\{[\lambda y_1+\mu x]\quad|\quad (\lambda:\mu)\in \bP^1\}.$$ 
As in point $(b)$ of Proposition \ref{PROP:DIMY}, we have $L_{p_1}\subseteq F_{y_1}$ by the assumptions on $[x]$.

We claim now that $L_{p_1}\cap F_{y}={[y_1]}$.
\vspace{2mm}

Since, by assumption, $F_y$ has dimension $n-1$, the intersection $L_{p_1}\cap F_{y}$ cannot be empty. We will show now that $(L_{p_1}\setminus [y_1])\cap F_y$ is empty.
\vspace{2mm}

Notice that $L_{p_1}\setminus [y_1]$ is the affine line parametrized by $x(t)=x+ty_1$ with $t\in\bK$. Suppose that the intersection between $F_y$ and this affine line is not empty, i.e. there exists $\tilde{t}\in\bK$ such that $x+\tilde{t}y_1\in F_y$. This means that
$$(x+\tilde{t}y_1)^{N-2}y=0 \qquad \mbox{and multiplying by }x \qquad x(x+\tilde{t}y_1)^{N-2}y=0.$$
By construction, we have that $[x]\in F_{y_1}$ (equivalently, $([x],[y_1])\in\Gamma$), and so by Proposition \ref{PROP:CONE} we know that $x^iy_1^j=0$ for $j\ge1$ and $i+j=N-1$. Then we get $x(x+\tilde{t}y_1)^{N-2}=x^{N-1}$ and finally, by the above, $x^{N-1}y=0$, that is impossible by our assumptions. In conclusion, $L_{p_1}$ and $F_y$ meet each other at a single point, namely $[y_1]$.
\vspace{2mm}

We have proved that for general $[y_1]\in Y$ we have $[y_1]\in F_y$. Then, by the irreducibility of $Y$, we get $Y\subset F_y$. Since, this is true for every choice of $y\in Y$, we obtain claim $(a)$.
\vspace{2mm}

For $(b)$, let us consider two distinct points $[y_1],[y_2]\in Y$. From $(a)$ we have that $[y_2]\in F_{y_1}$ and then $p=([y_2],[y_1])\in \Gamma$. 
Let us now consider the projective line 
$$L_{p}=\{[\lambda y_1+\mu y_2] \,|\, (\lambda:\mu)\in\bP^1\}$$ so we have $L_{p}\subseteq \Sec(Y)$.
By Proposition \ref{PROP:CONE} we know that $y_2^iy_1^j=0$ for every $i,j$ with $j\ge1$ and $i+j=N-1$. On the other hand, we have that $y_2\in F_{y_2}$ so $0=y_2^{N-2}y_2=y_2^{N-1}$. By the above equations we get
$$(\lambda y_1+\mu y_2)^{N-1}=0$$
so $L_{p}\subseteq Y_{N-1}$.
Since every secant line is contained in $Y_{N-1}$, we have claim $(b)$.
\end{proof}

If we assume that $F_y$ has dimension $n-1$ for all $y\in Y$ we can strengthen the result of Corollary \ref{COR:DIMY2}:

\begin{proposition}
\label{PROP:DIMY3}
Assume that $F_y$ has dimension $n-1$ for all $y\in Y$. Then 
$1 \leq \dim(Y)\leq n-3$.
\end{proposition}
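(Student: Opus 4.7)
The plan is to rule out $\dim Y=n-2$ by contradiction, combining the secant-variety constraint of Proposition~\ref{PROP:ALTERNATIVA}(b) with a dimension count on $\Gamma$ driven by the hypothesis $\dim F_y=n-1$. Corollary~\ref{COR:DIMY2} already gives $\dim Y\leq n-2$, so only the equality case must be excluded. From Proposition~\ref{PROP:ALTERNATIVA}(b) and Remark~\ref{REM:HP} one has $\Sec(Y)\subseteq Y_{N-1}\subsetneq \bP(R^1)$, so $\dim \Sec(Y)\leq n-1$. For a general $[y]\in Y$ the cone over $Y$ with vertex $[y]$ is contained in $\Sec(Y)$, and since $Y$ is not linear (Corollary~\ref{COR:NOLINEAR}) it has dimension $\dim Y+1=n-1$. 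Because $\Sec(Y)$ is irreducible, this forces $\Sec(Y)$ to coincide with such a cone for every general $[y]\in Y$, hence to be a cone with vertex at the full linear span $\langle Y\rangle$. Since $Y$ is non-linear of dimension $n-2$, $\dim\langle Y\rangle\geq n-1$: either $\langle Y\rangle=\bP(R^1)$ (forcing $\Sec(Y)=\bP(R^1)$, a contradiction), or $\langle Y\rangle=H$ is a hyperplane and $\Sec(Y)=H\subseteq Y_{N-1}$.

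In the remaining hyperplane case I aim to show that $F_y$ itself is forced to equal $H$. The ker-coker relation $xy^{N-2}=0$ from Proposition~\ref{PROP:CONE} gives the inclusion $F_y\subseteq \bP(\Ann_{R^1}(y^{N-2}))$ whenever $y^{N-2}\neq 0$. The standardness of $R$ and Gorenstein duality together force $y^{N-2}\cdot R^1\neq 0$ (otherwise $y^{N-2}\cdot R^k=0$ for all $k\ge 1$, contradicting $y^{N-2}\neq 0$ via the perfect pairing $R^{N-2}\times R^2\to R^N$), so $\Ann_{R^1}(y^{N-2})$ is at most a hyperplane. Comparing with $\dim F_y=n-1$ gives equality: $F_y=\bP(\Ann_{R^1}(y^{N-2}))$ is a hyperplane. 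Since $Y\subseteq F_y$ spans $H$, the two hyperplanes coincide, so $F_y=H$ for general $[y]\in Y$. Consequently $\Gamma$ contains a dense open subset of $H\times Y$, hence all of $H\times Y$ by irreducibility. The dimension match $\dim(H\times Y)=(n-1)+(n-2)=2n-3=\dim Y+\dim F_y=\dim \Gamma$ then forces $\Gamma=H\times Y$, whence $\pi_1(\Gamma)=H\neq \bP(R^1)$, contradicting assumption $(\star)$.

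The step I expect to be the main obstacle is the residual degenerate sub-case $Y\subseteq Y_{N-2}$, in which $y^{N-2}\equiv 0$ on $Y$ makes the hyperplane identification of $F_y$ above vacuous. To handle it I plan to iterate at a lower level: differentiating $y^{N-2}\equiv 0$ along tangent directions of $Y$ yields $y^{N-3}\cdot\widetilde{T}_{[y]}Y=0$, which combined with the next non-trivial relation $x^2y^{N-3}=0$ from Proposition~\ref{PROP:CONE} again constrains $F_y$ inside a proper subspace via Gorenstein duality applied to $y^{N-3}$. Controlling the rank of multiplication by $y^{N-3}$ on $R^1$, and if necessary iterating down to the smallest $j$ for which $y^{N-j}\neq 0$ on $Y$, should reproduce the same contradiction $\Gamma=H\times Y$ against $(\star)$; the iteration is bounded by the socle degree $N$.
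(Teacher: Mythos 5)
Your first two paragraphs are correct, and the first is a genuinely different route from the paper's for killing the non-degenerate case: where the paper cuts with general hyperplanes down to a non-degenerate curve in $\bP^3$ and invokes the classical fact that its secant variety must be $3$-dimensional, you use a join/vertex argument ($\Sec(Y)$ coincides with the cone over $Y$ from a general point of $Y$, hence is a cone with vertex $\langle Y\rangle$), which is clean and self-contained. Your endgame in the hyperplane case ($F_y=H$ for general $[y]$, hence $H\times Y=\Gamma$, contradicting $(\star)$) is also exactly the paper's. The genuine gap is the sub-case you defer to your last paragraph, and it is not a routine verification. The situation $Y\subseteq Y_{N-2}$ is a real phenomenon, not an exotic one: in the Gordan--Noether/Permutti examples $F=x_0p+x_1q+x_2r$ with $p,q,r$ forms of degree $N-1$ in $x_3,x_4$, every $y$ in the span of $y_0,y_1,y_2$ (which contains $Y$ there) satisfies $y^2=0$, hence $y^{N-2}\equiv 0$ on $Y$ as soon as $N\geq 4$; so inside your proof by contradiction it cannot be discarded and must be handled. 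Moreover your proposed iteration does not go through as described: once $y^{N-2}=0$, the surviving relations from Proposition \ref{PROP:CONE} are $x^iy^{N-1-i}=0$ with $i\geq 2$, which are of degree $\geq 2$ in $x$; they confine $F_y$ inside an intersection of quadrics (or worse), not inside a linear space, so the mechanism driving your second paragraph --- ``$F_y$ has dimension $n-1$ and lies in a proper \emph{linear} subspace, hence is a hyperplane'' --- is no longer available. The tangency relation $y^{N-3}\cdot\widetilde{T}_{[y]}Y=0$ constrains $Y$, not the variable $[x]\in F_y$, and iterating down in the exponent never restores linearity in $x$.

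The paper closes this case without any dichotomy on powers of $y$: since $Y$ is a hypersurface of degree $\geq 2$ in $H$ (degree $\geq 2$ by Corollary \ref{COR:NOLINEAR}), a general line $L\subseteq H$ through a general $[y]\in Y$ meets $Y$ in a second point $[y_1]$; by Proposition \ref{PROP:ALTERNATIVA}(a) both $[y]$ and $[y_1]$ lie in $F_y$, and by Proposition \ref{PROP:DIMY}(b) $F_y$ is a cone with vertex $[y]$, so $L\subseteq F_y$. Such lines sweep out a dense subset of $H$, giving $H\subseteq F_y$ for general $[y]$ with no hypothesis on $y^{N-2}$, after which your own conclusion $H\times Y=\Gamma$ against $(\star)$ applies verbatim. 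If you replace your third paragraph by this cone argument (everything it needs is already proved in your write-up), the proof closes; as it stands, the sub-case $Y\subseteq Y_{N-2}$ is open and the plan sketched for it would fail.
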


\begin{proof}
For all $[x]\in \bP(R^1)$, recall that $[x]\times \bP(K_x)$ is the fiber of $x$ with respect to $\pi_1:\Gamma\to \bP(R^1)$. Denote by $r-1$ the dimension of the general fiber $\bP(K_x)$.

$$
\xymatrix{
 & [x]\times \bP(K_x) \ar@{->>}[ld]\ar@{^{(}->}[rr] &  & \Gamma \ar@{->>}[dr]^{\pi_1}\ar@{->>}[dl]_{\pi_2} & & F_y\times [y]\ar@{_{(}->}[ll] \ar@{->>}[dr] \\
[x] \ar@{^{(}->}[rr] & & \bP(R^1) & & Y & & [y] \ar@{_{(}->}[ll]
}$$

Being $\bP(R^1)$, $\Gamma$ and $Y$ irreducible and $\pi_1$ and $\pi_2$ in the above diagram surjective by assumption, we have 
$$\dim(\Gamma)=\dim(\bP(R^1))+\dim(\bP(K_x))=n+r-1\qquad \dim(\Gamma)=\dim(Y)+\dim(F_y)=\dim(Y)+n-1$$
so $\dim(Y)=r$. By Corollary \ref{COR:DIMY2} we have $$1\leq \dim(Y)\leq n-2$$ so it is enough to prove that $\dim(Y)$ cannot be equal to $n-2$. This is clearly true if $n=2$ so we can assume $n\geq 3$. By contradiction, assume that $\dim(Y)=r=n-2$. Denote by $s$ the dimension of $\Sec(Y)$. By Proposition \ref{PROP:ALTERNATIVA} we have that $Y\subseteq \Sec(Y)\subseteq Y_{N-1}\subsetneq \bP(R^1)$ so we have $n-2\leq s\leq n-1$.
\vspace{2mm}

Notice, first of all, that $s$ cannot be $n-2$. Indeed, if $\dim(\Sec(Y))=\dim(Y)=n-2$, we would have that $Y$ is linear. This is impossible by Corollary \ref{COR:NOLINEAR}. Hence we can assume $s=n-1$.
\vspace{2mm}

Assume first that $Y$ is non-degenerate. We have that $Y$ and $\Sec(Y)$ have codimension $2$ and $1$ respectively in the smallest projective space that contains $Y$ (and $\Sec(Y)$). By considering the general hyperplane section $Y'$ and its secant variety $\Sec(Y')=\Sec(Y)\cap H$, we preserve the above properties and $Y'$ is as well non-degenerate. We can then cut with $n-3$ general hyperplanes in order to obtain a curve $C$ in $\bP^{3}$ and its secant variety which is a surface in $\bP^3$. This is impossible since, in this case, $C$ would be a plane curve.
\vspace{2mm}

The only remaining case to analyse is when $Y$ is degenerate of dimension $n-2$, $\dim (\Sec(Y))=n-1$ and the smallest projective subspace $H$ containing $Y$ is an hyperplane in $\bP(R^1)$. In particular, $Y$ is an hypersurface in $H=\Sec(Y)$ and its degree is at least $2$ (otherwise $Y$ would be linear).
\vspace{2mm}

First of all, we will prove that $H\subseteq F_y$ for $[y]\in Y$ general. Let $[y]\in Y$ be a general point. The general line $L$ through $[y]$ in $H$ cuts $Y$ in at least another point $[y_1]$. By Proposition \ref{PROP:ALTERNATIVA} $(a)$, we have that $[y],[y_1]\in F_{y}$ and then, by point $(b)$ of Proposition \ref{PROP:DIMY}, $L$ is contained in $F_y$. Since such lines cover a dense open subset of $H$ we have that $H\subseteq F_y$.
Then $H\times[y]\subset F_y\times [y]$ and  then $H\times Y\subseteq \Gamma$. Since they have the same dimension and they are both irreducible we have $H\times Y=\Gamma$. This is impossible by $(\star)$: if $H\times Y=\Gamma$ we would have $\pi_1(\Gamma)=H\neq \bP(R^1)$. Hence $\dim(Y)\leq n-3$ as claimed.
\end{proof}

We can now prove Proposition \ref{PROP:MAININEQ}: 
\begin{proof} Assume, by contradiction, that $n\leq 3$. Since we are assuming $\dim(Y)=1$ we have that $\dim(F_y)=n-1$ by Proposition \ref{PROP:DIMY}. Then, by Proposition \ref{PROP:DIMY3}, we have $1\leq \dim(Y)\leq n-3\leq 0$, which is clearly impossible.
\end{proof}

Let us now restate and prove our main result (Theorem B):

\begin{theorem}
\label{THM:LEFSCHETZ1}
For all standard Artinian Gorenstein $\bK$-algebras of codimension at most $4$
there exists $x\in R^1$ such that $x^{N-2}:R^1\to R^{N-1}$ is an isomorphism, i.e. the strong Lefschetz property holds in degree $1$.
\end{theorem}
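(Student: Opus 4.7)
The plan is to argue by contradiction, converting the failure of SLP in degree $1$ into the geometric hypothesis $(\star)$ of Section \ref{SEC:2} with $k = N - 2$, and then exploiting the numerical bounds already established to derive an inequality on $n = \dim_\bK R^1 - 1$ incompatible with the hypothesis $n + 1 \leq 4$.

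First I would dispose of the degenerate case $n = 0$, where $R \cong \bK[x]/(x^{N+1})$ and multiplication by $x^{N-2}$ is manifestly an isomorphism $R^1 \to R^{N-1}$. Assume then $n \geq 1$ and suppose, for contradiction, that no $x \in R^1$ yields an isomorphism $x^{N-2}\cdot : R^1 \to R^{N-1}$. By Gorenstein duality one has $\dim_\bK R^1 = \dim_\bK R^{N-1}$, so a linear map between these two spaces is an isomorphism if and only if it is injective. Therefore the assumed failure of SLP is equivalent to saying that for every $[x] \in \bP(R^1)$ the kernel of $x^{N-2}\cdot$ on $R^1$ is nonzero, which is precisely $(\star)$ for $k = N - 2$; the entire machinery developed in Subsection \ref{SUBSEC:N-2} becomes available.

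At this point I would split the remaining range $1 \leq n \leq 3$ into cases. For $n = 1$ the bound $1 \leq \dim Y \leq n - 1 = 0$ from Proposition \ref{PROP:DIMY}(d) is immediately contradictory. For $n = 2$ the sharper inequality $\dim Y \leq n - 2 = 0$ supplied by Corollary \ref{COR:DIMY2} clashes with $\dim Y \geq 1$. The case $n = 3$ is the only one requiring the finer structural input: Corollary \ref{COR:DIMY2} forces $\dim Y = 1$, and Proposition \ref{PROP:MAININEQ} then forces $n \geq 4$, contradicting $n = 3$. In every case the assumption that SLP in degree $1$ fails yields a contradiction, so such an $x$ must exist.

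The main obstacle is not in this final assembly, which is essentially a numerical case-check, but in the structural ingredients it rests on — especially Proposition \ref{PROP:MAININEQ} (through Proposition \ref{PROP:DIMY3}) and the projective-duality identification $Y = Z^{\ast}$ of Proposition \ref{PROP:DUAL}. Once those are granted, Theorem B follows by the short argument above.
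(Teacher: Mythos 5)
Your proposal is correct and takes essentially the same route as the paper's own proof: the failure of $SLP_1$ is translated into assumption $(\star)$ for $k=N-2$, and then the bound $1\leq\dim(Y)\leq n-2$ from Corollary \ref{COR:DIMY2} combined with Proposition \ref{PROP:MAININEQ} excludes every $n\leq 3$. Your explicit case split ($n=0,1,2,3$) and the remark that Gorenstein duality reduces isomorphism to injectivity are just slightly more detailed versions of what the paper compresses into a three-line contradiction argument.
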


\begin{proof}
Assume, by contradiction, that for all $x\in R^1$ the map $x^{N-2}\cdot :R^1\to R^{N-1}$ is not an isomorphism. Then, the projection $\pi_1:\Gamma_{N-2}\to \bP(R^1)$ is surjective, i.e. assumption $(\star)$ holds for $k=N-2$. Under these assumptions we have $1\leq \dim(Y)\leq n-2$ by Corollary \ref{COR:DIMY2}. Hence $n$ is equal to $3$ and $\dim(Y)=1$. This is impossible by Proposition \ref{PROP:MAININEQ}.
\end{proof}

%------------------------------------------------
%------------------------------------------------
%------------------------------------------------
%------------------------------------------------
%------------------------------------------------

\section{Gordan-Noether and strong Lefschetz property}
\label{SEC:3}

In this section we recall a well known result which shows that Theorem \ref{THM:LEFSCHETZ1} and the following one are equivalent. 

\begin{theorem}[Theorem A - Gordan-Noether]
\label{THM:GN}
Let $X=V(F)\subset \bP^n$ be a hypersurface with vanishing hessian (i.e. $\hess(F)=\det(\Hess((F))=0$). Then, if $n\le 3$, $X$ is a cone.
\end{theorem}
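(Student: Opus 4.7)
The plan is to deduce Theorem \ref{THM:GN} from Theorem \ref{THM:LEFSCHETZ1} via Macaulay's inverse systems theory, which is precisely why the authors reversed the logical order. Given $F\in S=\bK[x_0,\dots,x_n]$ homogeneous of degree $d$ with $n\leq 3$ and $\hess(F)\equiv 0$, I would first attach to $F$ the SAGA of Example \ref{EX:SAGAviaANN}: with $\cQ=\bK[y_0,\dots,y_n]$ acting on $S$ by partial differentiation, set $R=\cQ/\Ann_{\cQ}(F)$. This is a SAGA with socle in degree $N=d$, and its codimension is $\dim_\bK R^1=(n+1)-\dim_\bK(\cQ^1\cap\Ann_\cQ(F))\leq n+1\leq 4$, so Theorem \ref{THM:LEFSCHETZ1} applies and yields some $y\in R^1$ for which the map $\mu_y:R^1\to R^{N-1}$, $z\mapsto y^{N-2}z$, is an isomorphism.

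The next step is the classical dictionary between $\mu_y$ and $\Hess(F)$. Writing $y=\sum_i a_iy_i$ and using the basis $y_0,\dots,y_n$ of $R^1$ together with its Gorenstein-dual basis of $R^{N-1}$, the matrix representing $\mu_y$ has $(i,j)$-entry equal, under the Macaulay identification $R^d\simeq \bK$ (sending $w$ to the scalar $w(F)$), to $y_iy_j y^{d-2}(F)=\partial^2/\partial x_i\partial x_j\bigl(y^{d-2}(F)\bigr)$. By the multivariable Taylor formula for the degree-$d$ homogeneous polynomial $F$, the degree-$2$ polynomial $y^{d-2}(F)(x)$ is a nonzero combinatorial multiple of $\sum_{k,l}(\partial^2F/\partial x_k\partial x_l)(a)\,x_kx_l$. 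Hence the matrix of $\mu_y$ is, up to a nonzero universal scalar, the Hessian matrix $\Hess(F)(a)$, and $\mu_y$ is an isomorphism iff $\hess(F)(a)\neq0$. The assumption $\hess(F)\equiv 0$ therefore says $\mu_y$ is never an isomorphism, for any $y\in R^1$.

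The third ingredient is the translation of the cone property into a statement about $R^1$. A nonzero $\sum_i a_iy_i\in\cQ^1$ annihilates $F$ iff $\sum_i a_i\partial F/\partial x_i=0$, which by Euler's identity and homogeneity of $F$ is equivalent to $F$ being independent of the coordinate along the direction $a$, i.e.\ to $X=V(F)$ being a cone with vertex $[a]\in\bP^n$. Thus $X$ fails to be a cone iff $\cQ^1\cap\Ann_\cQ(F)=0$, iff $R$ has codimension exactly $n+1$.

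Putting these together, I would argue by contradiction: if $X$ is not a cone, then by the third step $R$ is a SAGA of codimension $n+1\leq 4$, so Theorem \ref{THM:LEFSCHETZ1} produces a $y\in R^1$ with $\mu_y$ an isomorphism; by the second step this forces $\hess(F)(a)\neq0$, contradicting the hypothesis. Hence $X$ must be a cone, proving Theorem \ref{THM:GN}. The only non-routine ingredient is the dictionary in the second step — essentially a symmetric-form/apolarity statement identifying the matrix of $\mu_y$ (in Gorenstein-dual bases) with $\Hess(F)(a)$. I expect this to be the main piece of bookkeeping; once it is recorded, the deduction is a short logical chain, and no input beyond Theorem \ref{THM:LEFSCHETZ1} and Macaulay's inverse systems theory is used — exactly the self-contained strategy advertised in the introduction.
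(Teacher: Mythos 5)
Your proposal is correct and takes essentially the same route as the paper: your ``dictionary'' step is precisely the paper's Lemma \ref{LEM:EQUIVHESS0} (the matrix of $\mu_y$ in Gorenstein-dual bases equals $(d-2)!\,\Hess(F)(a)$ via the differential Euler identity), and your cone/codimension translation together with the closing contradiction reproduce the paper's proof of the implication from Theorem \ref{THM:LEFSCHETZ1} to Theorem \ref{THM:GN}. The only cosmetic difference is that you invoke Theorem \ref{THM:LEFSCHETZ1} before fixing the non-cone hypothesis, but your final paragraph reorganizes this correctly, so the argument is sound as stated.
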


As a byproduct of this equivalence, our proof of Theorem \ref{THM:LEFSCHETZ1} gives a new proof of Gordan-Noether Theorem.
\vspace{2mm}

Let $\bK$ be a field and consider the polynomial ring in $m+1\geq1$ variables $S=\bK[x_0,\cdots,x_m]$ and the ring $\cQ$ of differential operators in such variables $\cQ=\bK\left[y_0,\cdots,y_m\right]$ with $y_i=\frac{\partial}{\partial x_i}$ as in Example \ref{EX:SAGAviaANN}. Let $\cA=Q/\Ann_Q(G)$ with $G\in S^d$ so that $\cA$ is a standard Artinian Gorenstein algebra with socle in degree $d$. Notice that the codimension of $\cA$, i.e. the dimension of $\cA^1$, is at most $m+1$ and equality holds as long as $(\Ann_{\cQ}(G))_1=\{0\}$. This is equivalent to ask that the partial derivatives of $G$ are linearly independent. Equivalently, $X=V(G)\subseteq \bP^m$ is not a cone.

% ---------------------------------------------------------------------------------
% ---------------------------------------------------------------------------------

\begin{lemma}
\label{LEM:EQUIVHESS0}
Fix $G\in S^d\setminus\{0\}$ and consider the SAGA $\cA=\cQ/\Ann_{\cQ}(G)$. Then $\cA$ has the strong Lefschetz property in degree $1$ if and only if $\hess(G)\not\equiv 0$. 
\end{lemma}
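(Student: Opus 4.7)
The plan is to translate $SLP$ in degree $1$ via Gorenstein duality into a non-degeneracy statement for a symmetric bilinear form on $\cA^1$, and then to identify that form with the Hessian matrix of $G$ evaluated at a point.

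First, by definition $\cA$ satisfies the strong Lefschetz property in degree $1$ iff there exists $L\in\cA^1$ such that $L^{d-2}\cdot\colon\cA^1\to\cA^{d-1}$ is an isomorphism (the socle of $\cA$ sits in degree $N=d$). The Gorenstein pairing $\cA^1\times\cA^{d-1}\to\cA^{d}\cong\bK$ is perfect, so this multiplication is an isomorphism iff the symmetric bilinear form
\[B_L\colon\cA^1\times\cA^1\longrightarrow\cA^{d}\cong\bK,\qquad B_L(u,v)=L^{d-2}uv,\]
is non-degenerate. Here the identification $\cA^{d}\cong\bK$ is the Macaulay one: any $D\in\cQ^{d}$ acts on $G\in S^d$ to produce a constant $D(G)\in\bK$, which descends from $\cQ^{d}$ to $\cA^{d}$.

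Next I would compute $B_L$ in coordinates. Writing $L=\sum a_i y_i$ and lifting the arguments $u=y_i$, $v=y_j$ to $\cQ^1$, one gets
\[B_L(y_i,y_j)=\bigl(L^{d-2}\,y_iy_j\bigr)(G)=L^{d-2}\bigl(\partial_i\partial_j G\bigr).\]
Since $\partial_i\partial_j G$ is homogeneous of degree $d-2$ and $L$ is the directional derivative along $a=(a_0,\dots,a_m)$, the elementary identity $L^{n}(P)=n!\,P(a)$ for any homogeneous polynomial $P$ of degree $n$ (which follows from the Taylor expansion $P(x+ta)=\sum_k\tfrac{t^k}{k!}(L^kP)(x)$ combined with $P(ta)=t^nP(a)$) gives
\[B_L(y_i,y_j)=(d-2)!\,(\partial_i\partial_j G)(a).\]
Hence in the basis $\{y_0,\dots,y_m\}$ of $\cQ^1$, the matrix of $B_L$ is exactly $(d-2)!\,\Hess(G)(a)$.

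To conclude, in the case of interest (which is the one relevant for deducing Theorem A from Theorem B) the partial derivatives of $G$ are linearly independent, so $(\Ann_{\cQ}(G))_1=0$ and $\{y_0,\dots,y_m\}$ descends to a basis of $\cA^1$. The matrix above then represents $B_L$ on $\cA^1$ itself, so $B_L$ is non-degenerate for the given $L$ iff $\hess(G)(a)\neq 0$. Since this last condition is Zariski open in $a$, the existence of some $L$ realising $SLP_1$ is equivalent to $\hess(G)\not\equiv 0$.

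The main obstacle is not deep: the only genuine computation is the identification of $B_L$ with the Hessian of $G$ via the identity $L^n(P)=n!\,P(a)$. The real subtlety is simply bookkeeping, namely carefully tracking the chain of isomorphisms (Gorenstein duality, Macaulay's map $D\mapsto D(G)$, and the descent of the basis of $\cQ^1$ to $\cA^1$) so that the computation is performed in the correct quotient.
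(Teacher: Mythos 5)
Your proof is correct and follows essentially the same route as the paper's: the same symmetric bilinear form $\varphi_L(u,v)=(L^{d-2}uv)(G)$ on $\cA^1$, the same identification of its matrix with $(d-2)!\,\Hess(G)(a)$ via the differential Euler identity, and the same appeal to Gorenstein duality to convert non-degeneracy into $SLP_1$. If anything you are slightly more careful than the paper, which tacitly takes $\{y_0,\dots,y_m\}$ as a basis of $\cA^1$ (i.e.\ assumes $(\Ann_{\cQ}(G))_1=0$) without comment, whereas you flag this hypothesis explicitly --- and it is genuinely needed, since for instance $G=x_0^d\in\bK[x_0,x_1]$ has $\hess(G)\equiv 0$ while $\cA\simeq\bK[y_0]/(y_0^{d+1})$ satisfies $SLP_1$.
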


\begin{proof}
For any fixed $L=\sum_{i=0}^{m}k_i\frac{\partial}{\partial x_i}\in\cA^1$ we can consider the symmetric bilinear map
$$\varphi_L:\cA^1\times \cA^1\rightarrow \cA^{d}\simeq \bK$$
given by $\varphi_L(\eta,\xi)=(L^{d-2}\eta\xi)(G)$.
Let $\cB=\{y_0,\dots,y_m\}$ be a basis of $\cA^1$. Denote with $M_L$ the matrix associated to $\varphi_L$ with respect to $\cB$. Then we have $M_{L}=[\alpha_{ij}]_{0\leq i,j\leq m}$ with
$$\alpha_{ij}=(L^{d-2}y_iy_j)(G)=L^{d-2}(y_iy_j(G))=L^{d-2}(\Hess(G)_{ij})$$
where $\Hess(G)$ is the Hessian matrix of $G$. Since $\Hess(G)_{ij}$ is either 0 or has degree $d-2$, one can apply the differential Euler Identity (see \cite[Lemma 7.2.19]{Rus16}) in order to obtain
\begin{equation}
\label{EQ:ML}
M_L=(d-2)!\Hess(G)(k_0,\dots,k_m).
\end{equation}
Hence, having $\hess(G)\equiv 0$ is equivalent to ask that $\varphi_L$ is degenerate, i.e. for all $L,z\in \cA^1$ there exists $y\in \cA^1\setminus \{0\}$ such that $L^{d-2}yz=0$. By Gorenstein duality, this is equivalent to $L^{d-2}y=0$, i.e. $\cA$ does not satisfy the SLP in degree $1$.
\end{proof}

\begin{proposition}
Gordan-Noether Theorem (Theorem \ref{THM:GN}) is equivalent to Theorem \ref{THM:LEFSCHETZ1}.
\end{proposition}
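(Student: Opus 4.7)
The plan is to prove both implications by combining Macaulay's correspondence (Example \ref{EX:SAGAviaANN} and the remark right after it) with Lemma \ref{LEM:EQUIVHESS0}, which already translates vanishing of the Hessian into failure of $SLP$ in degree $1$ for the associated SAGA. The bridge between a hypersurface $V(F) \subset \bP^n$ and a SAGA is the assignment $F \mapsto \cA_F := \cQ/\Ann_{\cQ}(F)$, and the bookkeeping one has to do is to match the codimension of $\cA_F$ with $n+1$ in the correct cases.

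For the implication $\ref{THM:LEFSCHETZ1}\Rightarrow \ref{THM:GN}$, I would start with $F \in \bK[x_0,\dots,x_n]$ of degree $d$ with $n \leq 3$ and $\hess(F)=0$, and argue by contradiction: assume $V(F)$ is not a cone. Then the partial derivatives of $F$ are $\bK$-linearly independent, which is exactly the condition ensuring $(\Ann_{\cQ}(F))_1 = 0$. Consequently $\cA_F$ is a SAGA of codimension exactly $n+1 \leq 4$. Theorem \ref{THM:LEFSCHETZ1} then forces $\cA_F$ to satisfy $SLP$ in degree $1$, and Lemma \ref{LEM:EQUIVHESS0} immediately yields $\hess(F)\not\equiv 0$, a contradiction.

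For the reverse implication $\ref{THM:GN}\Rightarrow \ref{THM:LEFSCHETZ1}$, I would invoke Macaulay's theorem (cited after Example \ref{EX:SAGAviaANN}) to present a given SAGA $R$ of codimension $c \leq 4$ as $\cQ/\Ann_{\cQ}(G)$ for some homogeneous $G$. The delicate point, which is the main obstacle, is to ensure a presentation of $R$ in which the ambient polynomial ring uses exactly $c$ variables, so that $V(G)\subset \bP^{c-1}$ is not a cone and $n := c-1 \leq 3$. This is standard (if $G\in \bK[x_0,\dots,x_m]$ has linearly dependent partials, a linear change of coordinates eliminates one variable at a time without altering the isomorphism class of $\cA_G$), but it must be done carefully since Macaulay's theorem as stated only gives some presentation. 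Once $G$ is in normal form, suppose $R$ fails $SLP$ in degree $1$; by Lemma \ref{LEM:EQUIVHESS0} one has $\hess(G)\equiv 0$, and Theorem \ref{THM:GN} applied to $V(G)\subset \bP^n$ with $n \leq 3$ says $V(G)$ is a cone, contradicting the choice of presentation.

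Combining these two directions settles the equivalence. The crux of the argument is therefore Macaulay duality together with the computational identity $M_L = (d-2)!\,\Hess(G)(k_0,\dots,k_m)$ of Lemma \ref{LEM:EQUIVHESS0}: the Hessian matrix of $G$ literally is (up to a scalar) the matrix of the multiplication-by-$L^{d-2}$ form on $\cA^1$, and vanishing of $\hess(G)$ is equivalent to this form being degenerate for all $L$, i.e.\ to the failure of $SLP_1$ on $\cA_G$. Once this dictionary is in place, the equivalence of Theorems \ref{THM:GN} and \ref{THM:LEFSCHETZ1} is a direct translation.
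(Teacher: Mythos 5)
Your proposal is correct and follows essentially the same route as the paper: both directions hinge on Macaulay's presentation $R \simeq \cQ/\Ann_{\cQ}(F)$ with linearly independent partials (equivalently, $V(F)$ not a cone) together with Lemma \ref{LEM:EQUIVHESS0} translating $\hess(F)\equiv 0$ into failure of $SLP_1$. Your extra care about normalizing the Macaulay presentation to use exactly $\dim R^1$ variables is a point the paper passes over quickly ("we can suppose that $(\Ann_{\cQ}(F))_1=0$"), but it is the same argument.
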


\begin{proof}
{\bf Assume first that Theorem \ref{THM:LEFSCHETZ1} holds}. Let $X=V(F)$ be an hypersurface of degree $d\geq 2$ in $\bP^n$ with $n\leq 3$ and we assume that $X$ is not a cone. We have to show that $\hess(F)\neq 0$. Since $X$ is not a cone, the partial derivatives of $F$ are linearly independent. Hence, if we consider the SAGA $\cA=\cQ/\Ann_{\cQ}(F)$ as above, we have that $\cA$ has codimention $n+1\leq 4$ and socle in degree $d$. By Theorem \ref{THM:LEFSCHETZ1} $\cA$ has the strong Lefschetz property in degree $1$ so, by Lemma \ref{LEM:EQUIVHESS0}, $\hess(F)\neq 0$ as claimed.
\vspace{2mm}

{\bf Assume now that Theorem \ref{THM:GN} holds}. Let us consider a standard Artinian Gorenstein $\bK$-algebra $\cA$ of codimension $n+1$, with $n\le3$. This algebra can be described as
$$\cA=\frac{\cQ}{\Ann_{\cQ}(F)},$$
for a homogeneous polynomial $F$ of degree $d$ in the variables $x_0,\dots, x_n$ by Macaulay's Theorem. We can suppose that $F$ is such that $(\Ann_{\cQ}(F))_1=0$, i.e. its partial derivatives are linearly independent. Let us now assume by contradiction that $\cA$ does not satisfy the $SLP$ in degree $1$. Then, by Lemma \ref{LEM:EQUIVHESS0}, we would have $\hess(F)=0$. This is impossible since, by Theorem \ref{THM:GN} we would have that $V(F)$ is a cone: indeed, this would imply that the partial derivatives of $F$ are linearly dependent, which is against our assumptions. 
\end{proof}

%----------------------------------------------------------------------
%----------------------------------------------------------------------
%----------------------------------------------------------------------
%----------------------------------------------------------------------
%----------------------------------------------------------------------
%----------------------------------------------------------------------

\subsection{The Gordan-Noether identity}
\label{SUBSEC:GNI}
$\,$\\

We conclude this section by proving, using our framework and, in particular, the Gorenstein-Gordan-Noether identity \eqref{EQ:GGN}, an identity obtained by Gordan and Noether which is one of the key arguments in the original proof of Gordan-Noether Theorem. For brevity we called it Gordan-Noether identity. The original one is given in \cite[7.3.4]{Rus16}.
\medskip

First of all, let us introduce the formula (i.e. Equation \eqref{EQ:GN}) in its original setting. For any $h\in S=\bK[x_0,\dots,x_n]$ homogeneous, let $\nabla_h:\bP^n\dashrightarrow (\bP^n)^*$ be the polar map associated to $h$. Take $f\in S^d$ with $d\geq 1$ without multiple factors. 
The closure $Z'$ of the image of $\nabla_f$ in $(\bP^n)^*$ is easily seen to be a proper subvariety of $(\bP^n)^*$ if and only if $\hess(f)\equiv 0$. In this case, for any hypersurface $T=V(g)$ containing $Z'$, we can consider the \emph{Gordan-Noether map associated to $g$} which is 
$$\psi_g:=\nabla_g\circ \nabla_f :\bP^n\dashrightarrow\bP^n.$$
One of the key steps in the classical proof of Gordan-Noether Theorem is the following claim: if $f$ has vanishing hessian, then the {\it Gordan-Noether identity} \begin{equation}
\label{EQ:GN}
\psi_g(\underline{x}+\lambda\psi_g(\underline{x}))=\psi_g(\underline{x})
\end{equation}
holds for all $\lambda\in \bK$.
\vspace{2mm}

Let us now express the map $\psi_g$ using the framework introduced in Section \ref{SEC:2}. 
Let $R$ be a standard Artinian Gorenstein algebra with socle in degree $N$ and assume that the $SLP_1$ does not hold. By Macaulay Theorem we have $R=\cQ/\Ann_{\cQ}(F)$ for some suitable $F\in \bK[x_0,\cdots, x_n]$ with $\Ann_\cQ(F)_1=(0)$ and $\hess(F)=0$. By \cite[Lemma 3.74]{bookLef}, $F$ can be taken to be the function $x\mapsto x^N$ via the isomorphism $R^N\simeq \bK$.
\vspace{2mm}

By considering this function, one can observe that $\nabla_F$ is exactly the map $\psi:\bP(R^1)\dashrightarrow \bP(R^{N-1})$ introduced in Section \ref{SEC:2}, i.e. the map such that $\psi([x])=[x^{N-1}]$ for $[x]\in \bP(R^1)\setminus\{Y_{N-1}\}$, so our variety $Z$ coincides with the variety $Z'$ introduced above. If $T=V(g)$ is an hypersurface containing $Z$, then the Gordan-Noether map $\psi_g$ defined above is the composition $\nabla_g\circ \nabla_F=\nabla_g\circ \psi$. Since the image of $\nabla_g$ lives in $\bP(R^{N-1})^*\simeq \bP(R^1)$ we interpret $\psi_g$ as a (rational) map from $\bP(R^1)$ to $\bP(R^1)$. As $V(g)^*\subseteq Z^*=Y$, we have that, for all $g$ as above, the image of $\psi_g$ is contained in $Y$.

\begin{proposition}
The Gordan-Noether identity - Equation \eqref{EQ:GN} - follows from the Gorenstein-Gordan-Noether identity - Equation \eqref{EQ:GGN}.
\end{proposition}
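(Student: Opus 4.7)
The plan is to interpret the Gordan-Noether map $\psi_g$ within the framework of Section \ref{SEC:2}, so that applying $\psi_g$ to $[x]$ produces, after Gorenstein duality, precisely a point $[y]$ such that $([x],[y])\in\Gamma$. Once this is established, the identity \eqref{EQ:GN} becomes a direct consequence of the projective Gorenstein-Gordan-Noether identity $[(x+\lambda y)^{N-1}]=[x^{N-1}]$.

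First, I would unpack the two factors of $\psi_g=\nabla_g\circ\psi$. The map $\psi\colon\bP(R^1)\dashrightarrow\bP(R^{N-1})$ sends $[x]$ to $[x^{N-1}]\in Z_{\mathrm{smooth}}$ for general $[x]$. As already noted in the proof of Proposition \ref{PROP:DUAL}, the projective tangent space to $Z$ at $[x^{N-1}]$ is $T_{[x^{N-1}]}Z=\bP(x^{N-2}\cdot R^1)$. The map $\nabla_g$ takes a smooth point $[z]\in V(g)$ to the tangent hyperplane $T_{[z]}V(g)\in\bP(R^{N-1})^*$; since $Z\subseteq V(g)$ and $[x^{N-1}]\in Z_{\mathrm{smooth}}$, this hyperplane necessarily contains $T_{[x^{N-1}]}Z=\bP(x^{N-2}\cdot R^1)$. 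Via the Gorenstein duality isomorphism $\bP(R^{N-1})^*\simeq\bP(R^1)$, a hyperplane of $\bP(R^{N-1})$ contains $\bP(x^{N-2}\cdot R^1)$ exactly when the corresponding $[y]\in\bP(R^1)$ satisfies $y\cdot(x^{N-2}\cdot w)=0$ for all $w\in R^1$, which by perfectness of $R^1\times R^{N-1}\to R^N$ is equivalent to $x^{N-2}y=0$. Hence for general $[x]$, if we write $[y]=\psi_g([x])$, then $([x],[y])\in\Gamma_{N-2}$, and since $[x]$ lies outside the closure of $\pi_1(\Gamma_{N-2}\setminus\Gamma)$ (by Lemma \ref{LEM:UniqueGamma}), in fact $([x],[y])\in\Gamma$.

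Second, I would feed this into the Gorenstein-Gordan-Noether identity \eqref{EQ:GGN} with $k=N-2$: since $([x],[y])\in\Gamma$, we have $(x+\lambda y)^{N-1}=x^{N-1}$ in $R^{N-1}$ for every $\lambda\in\bK$. In particular
\[
\psi([x+\lambda y])=[(x+\lambda y)^{N-1}]=[x^{N-1}]=\psi([x]).
\]
Applying $\nabla_g$ to both sides yields $\psi_g([x+\lambda y])=\psi_g([x])$. Specializing $[y]=\psi_g([x])$ gives exactly the Gordan-Noether identity
\[
\psi_g\bigl([x]+\lambda\,\psi_g([x])\bigr)=\psi_g([x]),
\]
valid generically in $[x]$ and thus as an equality of rational maps, which is \eqref{EQ:GN}.

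The only delicate point is the identification in the first paragraph, namely that $\nabla_g(z)\in\bP(R^{N-1})^*$ corresponds under the Gorenstein pairing to a vector $y\in R^1$ annihilating $x^{N-2}R^1$; one should check that the Gorenstein isomorphism $R^1\simeq(R^{N-1})^*$ is the same (up to scalars) as the one used when reading $\nabla_g(z)$ as an element of $\bP(R^1)$, so that the passage from ``hyperplane containing $T_{[x^{N-1}]}Z$'' to ``$y$ with $x^{N-2}y=0$'' is unambiguous. Once this compatibility is verified, everything else is a short computation using \eqref{EQ:GGN}.
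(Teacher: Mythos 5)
Your proof is correct and follows essentially the same route as the paper: you set $[y]=\psi_g([x])$ for general $[x]$, identify the tangent hyperplane $\nabla_g([x^{N-1}])$ with an element of $\bP(R^1)$ annihilating $x^{N-2}\cdot R^1$ via Gorenstein duality to get $([x],[y])\in\Gamma$ (using the uniqueness of the dominating component), and then apply \eqref{EQ:GGN} composed with $\nabla_g$. The compatibility concern you raise at the end is precisely the identification the paper sets up before Proposition \ref{PROP:DUAL}, so nothing further is needed.
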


\begin{proof}
Let $[x]\in \bP(R^1)$ be a general point. We can assume then, that $\psi([x])=[x^{N-1}]=[z]$ is a smooth point for $Z$. Set $[y]=\psi_g([x])$ and notice that $[y]\in Y$ as we have seen that the image of $\psi_g$ lies in $Y$. We claim that $([x],[y])\in \Gamma$. Indeed, $[y]\in Y\subset \bP(R^1)$ corresponds to an hyperplane $H_y$ of $\bP(R^{N-1})$ tangent to $V(g)$ containing the (projective) tangent space $T_{[z]}(Z)=\bP(x^{N-2}\cdot R^1)$ by construction. This implies that $y$ annihilates the vector space $x^{N-2}\cdot R^1$. Since $(x^{N-2}y)\cdot R^1=0$, by Gorenstein duality we have $x^{N-2}y=0$ so $([x],[y])\in \Gamma_{N-2}$. Since $[x]$ was general and since $\Gamma$ is the only component of $\Gamma_{N-2}$ dominating $\bP(R^1)$ via $\pi_1$, we have that $([x],[y])\in \Gamma$. Then, by the Gorenstein-Gordan-Noether identity (i.e. Equation \eqref{EQ:GGN}) we have
$$\psi_g([x]+\lambda\psi_g([x]))=\psi_g([x+\lambda y])=\nabla_g(\psi([x+\lambda y]))=\nabla_g(\psi([x]))=\psi_g([x])$$
as claimed.
\end{proof}

If $Z$ is an hypersurface we have a really simple description for the unique Gordan-Noether map in our framework.

\begin{remark}
\label{REM:ALPHAandGN}
Assume that $Z$ is an hypersurface. In this case there is only one Gordan-Noether map, i.e. the one associated to a generator $g$ for the ideal of $Z$, and one can see that it can be described as the dominant rational map $\alpha:\bP(R^1)\dashrightarrow
Y$ such that, for $[x]$ in a suitable dense open subset $U$, $\alpha([x])= \bP(\Ann_{R^1}(x^{N-2}))=[y]$, i.e. $\alpha([x])=[y]$ where $([x],[y])$ is the only point over $[x]$.
$$
\xymatrix{
\Gamma \ar@{->>}[r]^{\pi_2}\ar@/_1.0pc/@{->>}[d]^{\pi_1} & Y \\
\bP(R^1) \ar@/_1.0pc/@{-->}[u]\ar@{-->}[r]_-{\psi} \ar@{-->}[ru]_-{\psi_g}^-{\alpha}
& Z=V(g) \ar@{-->}[u]_{\nabla_g} \ar@{^{(}->}[r] & \bP(R^{N-1})
}
$$
% By construction, $Z$ is an hypersurface if and only if for $[x]\in \bP(R^1)$ general we have $\dim(x^{N-2}\cdot R^1)=n$ or, equivalently, $\dim(\Ann_{R^1}(x^{N-2}))=1$.
% Since $\pi_1^{-1}([x])=[x]\times \bP(\Ann_{R^1}(x^{N-2}))$, we have that, in this case, $\pi_1|_{\Gamma}$ has a rational inverse on the open dense subset $U$ of $\bP(R^1)$ where $\dim(\Ann_{R^1}(x^{N-2}))=1$. The composition with the projection $\pi_2$ gives the dominant rational map $\alpha:\bP(R^1)\dashrightarrow
% Y$ such that $\alpha([x])= \bP(\Ann_{R^1}(x^{N-2}))=[y]$, i.e. $\alpha([x])=[y]$ where $([x],[y])$ is the only point over $[x]$. On the other hand, as observed before, if $([x],[y])\in \Gamma$ is general, then $[y]\in Y$ is the point of $\bP(R^1)$ which represent the hyperplane tangent to $Z$ in $[x^{N-1}]=\psi([x])$. 
% $$
% \xymatrix{
% \Gamma \ar@{->>}[r]^{\pi_2}\ar@/_1.0pc/@{->>}[d]^{\pi_1} & Y \\
% \bP(R^1) \ar@/_1.0pc/@{-->}[u]\ar@{-->}[r]_-{\psi} \ar@{-->}[ru]_-{\psi_g}^-{\alpha}
% & Z=V(g) \ar@{-->}[u]_{\nabla_g} \ar@{^{(}->}[r] & \bP(R^{N-1})
% }
% $$
% In particular, $\psi_g=\alpha$.
\end{remark}

%%%%%%%%%%%%%%%%%%%%%%%%%%%%%%%%%%%%%%%%%%%%%%%%%%%%%%%%%%%%%%%%%%%%%%%%%%%%%%%%%%%%%%%%%%%%%%%%%%%%%%%%%%%%%%%%%%%%%%%%%%%%%%%%%%%%%%%%%%%%%%%%%%%%%%%%%%%%%%%%%%%%%%%%%%%%%%%%%%%%%%%%%%%%%%%%%%%%%%%%%%%%%%%%%%%%%%%%%%%%%%%%%%%%%%%%%%%%%%%%%%%%%%%%%%%%%%%%%%%%%%%%

\section{Lefschetz properties for complete intersection SAGAs presented by quadrics}
\label{SEC:4}

Set $S=\bK[x_0,\dots,x_n]=\sum_i S^i$ and let $d\geq 3$. Assume that $I$ is an ideal of $S$ generated by $\{f_0,\dots,f_n\}$ with $f_i\in I^{d-1}=I\cap S^{d-1}$ for all $i$ and such that $\{f_0,\dots,f_n\}$ is a regular sequence.  In this case $R=S/I$ is a SAGA with socle in degree $N=(d-2)(n+1)$. In particular, $I^{d-1}$ is a vector space of dimension $n+1$ and, by Bertini, the general form in $I^{d-1}$ is smooth and irreducible. Examples of such rings are the Jacobian rings of smooth hypersurfaces of degree $d$ in $\bP^n$. 

For any $[\eta]\in \bP(R^h)$ we set 
$$K^i_\eta =\ker\left(R^i\stackrel{\cdot\eta}\to R^{i+h}\right).$$
 
\begin{proposition}
\label{PROP:SameKer}
Assume that $1 \leq h\leq N-1$. The following properties hold:
\begin{enumerate}[(a)]
\item If $\eta\in R^h\setminus \{0\}$, then $h\geq(d-2)\dim(K^1_{\eta})$;
\item Let $\eta,\zeta\in R^h\setminus \{0\}$ and assume $h=(d-2)\dim(K^1_{\eta})=(d-2)\dim(K^1_{\zeta})$. Then
$K^1_\eta=K^1_\zeta$ if and only if $[\eta]=[\zeta]$ in $\bP(R^h)$.
\end{enumerate}
\end{proposition}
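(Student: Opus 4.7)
The plan is to exploit the Artinian Gorenstein quotient $R_\eta := R/(0:\eta)$, which by Lemma \ref{LEM:RmodCOND} is a SAGA of socle degree $N-h$ and codimension $c := (n+1) - \dim K^1_\eta$. Lifting $K^1_\eta$ to a subspace $V \subseteq S^1$ and setting $S' := S/VS \cong \bK[x'_0, \dots, x'_{c-1}]$, the quotient $R/(K^1_\eta\cdot R)$ is canonically identified with $S'/I'$, where $I'$ is the image of $I$ in $S'$, generated by the $n+1$ images $\bar f_0,\dots,\bar f_n$, still of degree $d-1$.

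For part (a), note that $V(I)=\{0\}$ in $\bA^{n+1}$ forces $V(I')=\{0\}$ in $\bA^{c}$, so $I'$ is $\mathfrak{m}'$-primary and $S'/I'$ is Artinian. Since $\bK$ is infinite, a generic choice of $c$ linear combinations of $\bar f_0,\dots,\bar f_n$ produces a regular sequence $g_0,\dots,g_{c-1} \in I' \cap (S')^{d-1}$, by the standard general-position argument in the Cohen--Macaulay ring $S'$. The resulting complete intersection $S'/(g_0,\dots,g_{c-1})$ is Gorenstein with socle degree $c(d-2)$, and the chain of surjections
\[
S'/(g_0,\dots,g_{c-1}) \twoheadrightarrow S'/I' \twoheadrightarrow R_\eta,
\]
together with the fact that socle degree cannot increase under a surjection of Artinian graded algebras, yields
\[
N-h \;=\; \text{socle deg}(R_\eta) \;\leq\; c(d-2) \;=\; (n+1-\dim K^1_\eta)(d-2),
\]
which rearranges to (a).

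For part (b), only the nontrivial direction requires argument. Assume $K^1_\eta = K^1_\zeta =: W$ and $h=(d-2)\dim W$. Then all inequalities in the chain above collapse to equalities, so $S'/I'$ and $S'/(g_0,\dots,g_{c-1})$ share the socle degree $c(d-2)$. The decisive input is the structural fact that in any Artinian Gorenstein ring every nonzero graded ideal contains the one-dimensional socle. Applied to the ideal $I'/(g_0,\dots,g_{c-1})$ inside the Gorenstein ring $S'/(g_0,\dots,g_{c-1})$, if it were nonzero it would swallow the socle, strictly lowering the socle degree of the further quotient $S'/I'$ and contradicting equality. Hence $I' = (g_0,\dots,g_{c-1})$ and $S'/I'$ is itself Gorenstein; the same argument applied to the surjection $S'/I' \twoheadrightarrow R_\eta$ gives $R_\eta = S'/I'$, and therefore $(0:\eta) = W\cdot R$ inside $R$. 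Running the symmetric argument with $\zeta$ gives $(0:\eta) = (0:\zeta)$. Finally, in the Artinian Gorenstein ring $R$ one has the double annihilator identity $\Ann_R(\Ann_R(\alpha R)) = \alpha R$ for any $\alpha \in R$, hence $\eta R = \zeta R$; intersecting with $R^h$, where each principal ideal reduces to the one-dimensional $\bK$-line $\bK\cdot\eta$, resp.\ $\bK\cdot\zeta$, we conclude $[\eta]=[\zeta]$. The converse is immediate.

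The main obstacles I anticipate are the two invocations of the socle-containment lemma in the equality case of (b), and the extraction of a regular sequence of length $c$ inside $I'$ in (a); both rest on structural properties of Artinian Gorenstein and Cohen--Macaulay rings, together with the hypothesis that the generators of $I$ share the common degree $d-1$ and that $\bK$ is infinite.
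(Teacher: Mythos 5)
Your proof is correct, and it rests on the same key construction as the paper's --- the complete intersection built from $k=\dim K^1_\eta$ linear forms spanning the kernel together with $n+1-k$ generic elements of $I^{d-1}$, whose socle degree $(d-2)(n+1-k)$ drives all the numerology --- but the surrounding logic is genuinely different. The paper never leaves $R$: setting $\tilde I=(y_0,\dots,y_{k-1},g_k,\dots,g_n)$, it notes $\eta\cdot\tilde I\subseteq I$, hence $\eta$ kills every class of degree exceeding the socle degree of $S/\tilde I$, and Poincar\'e duality in $R$ turns this into $h\geq(d-2)k$; for (b) it identifies $K^{N-h}_\eta=K^{N-h}_\zeta$ with the codimension-one subspace of $R^{N-h}$ spanned by the image of $\tilde I^{N-h}$, and proportionality of $\eta$ and $\zeta$ follows from perfectness of the pairing $R^h\times R^{N-h}\to R^N$. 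You instead push everything to quotient algebras: the graded surjections $S'/(g_0,\dots,g_{c-1})\twoheadrightarrow S'/I'=R/(K^1_\eta\cdot R)\twoheadrightarrow R_\eta$ (with $c=n+1-k$), monotonicity of the top degree under graded surjection, and the socle degree of $R_\eta$ supplied by Lemma \ref{LEM:RmodCOND} give (a); in the equality case you extract the rigidity statements $I'=(g_0,\dots,g_{c-1})$ and $(0:\eta)=K^1_\eta\cdot R$ from the fact that every nonzero ideal of an Artinian Gorenstein algebra contains the socle, and you finish with the double-annihilator identity. The trade-off: your route invokes two structural facts the paper avoids (socle containment in nonzero ideals, and $\Ann_R(\Ann_R(\alpha R))=\alpha R$, both standard for Artinian Gorenstein rings), but it proves strictly more than the statement asks --- the annihilator $(0:\eta)$ is generated in degree one and $R_\eta$ is itself a complete intersection --- which the paper's pairing computation does not reveal. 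Note also that the double annihilator is dispensable at the end: once you know $(0:\eta)=(0:\zeta)$, comparing degree-$(N-h)$ pieces and using the perfect pairing $R^h\times R^{N-h}\to R^N$ (exactly the paper's last step) already forces $[\eta]=[\zeta]$. Finally, both proofs lean on the same prime-avoidance argument over the infinite field $\bK$ to manufacture the regular sequence of degree-$(d-1)$ forms; the paper asserts it without proof, and your reduction to the standard Cohen--Macaulay general-position argument is an adequate justification.
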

\begin{proof}
Assume that $\dim (K^1_\eta)=k$ and chose $y_0\dots y_{k-1}$ linearly independent elements in $K^1_{\eta}$. We can find $g_k,\dots, g_{n}\in I^{d-1}$ such that $\tilde{I}=(y_0\dots y_{k-1},g_k,\dots, g_{n})$ is the irrelevant ideal (i.e. $\{y_0\dots y_{k-1},g_k,\dots, g_{n}\}$ is a regular sequence). Then $\tilde{R}=S/\tilde{I}$ is a standard Gorenstein Artinian algebra with socle in degree
$\tilde{N}=(d-2)(n+1-k)$. In particular, any element of $S$ of degree at least $\tilde{N}+1$ belongs to $\tilde{I}$. We claim that $\eta\cdot R^{\tilde{N}+1}=0$. Indeed, if $g\in S^{\tilde{N}+1}$ we have
$$\eta\cdot g=\eta\cdot \left(\sum_{i=0}^{k-1}\lambda_iy_i+\sum_{i=k}^{n}\mu_ig_i\right)\in I$$
since $y_i\in K^1_{\eta}$ and $g_i\in I$. 
This is possible, by Gorenstein duality, if and only if $\tilde{N}+h+1>N$, i.e. if and only if $h\geq (d-2)k$ as claimed by $(a)$.

\bigskip
For $(b)$ assume that $\eta,\zeta\in R^h\setminus\{0\}$ are such that $K^{1}_\eta=K^1_{\zeta}$ and $h=(d-2)\dim(K^1_{\eta})$. Then we can proceed as before and construct the ideal $\tilde{I}$ and the ring $\tilde{R}$ with socle in degree $\tilde{N}=N-h$.
We claim that $K^{N-h}_{\eta}=K^{N-h}_{\zeta}$. Let $\tilde{\sigma}$ be a representant of the socle of $\tilde{R}$. Then we can write $S^{N-h}=\langle\tilde{\sigma},\tilde{I}^{\tilde{N}}\rangle$. One can easily check that $\eta\cdot \tilde{I}\subseteq I$ and $\zeta\cdot \tilde{I}\subseteq I$. On the other hand, $\eta,\zeta$ are not zero so $\eta\cdot R^{N-h}$ and $\zeta\cdot R^{N-h}$ are not $0$, i.e. $\eta\cdot \tilde{\sigma},\zeta\cdot \tilde{\sigma}\neq 0$ in $R$. Hence, we have that $K^1_{\eta}=K^1_{\zeta}=\tilde{I}^{\tilde{N}}$ and then $\eta$ and $\zeta$ are multiples.
\end{proof}

As an application, we have the following bound for the dimension of $Y_a=\{[y]\in \bP(R^1)\,|\, y^a=0\}$.

\begin{corollary} 
\label{COR:DimYa}
We have $$\dim (Y_a) \leq \frac {a-1}{d-2}-1.$$
In particular, if $d=3$, we have
$\dim (Y_{a})\leq a-2$.
 
\end{corollary}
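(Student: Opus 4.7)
The plan is to bound the dimension of $Y_a$ by looking at its tangent space at a generic smooth point of any irreducible component and applying Proposition \ref{PROP:SameKer}(a). Concretely, fix an irreducible component $W$ of $Y_a$ and let $b = b(W)$ be the smallest positive integer such that $W \subseteq Y_b$. Then by minimality $W \not\subseteq Y_{b-1}$, so for a general point $[y] \in W$ one has $y^b = 0$ and $\eta := y^{b-1} \neq 0$, with $b \leq a$.

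Next I would compute the tangent space. Let $\tilde{W} \subseteq R^1$ denote the affine cone over $W$, so $\dim \tilde{W} = \dim W + 1$. Since $W \subseteq Y_b$, any analytic arc $y(t) = y + tv + O(t^2)$ in $\tilde{W}$ satisfies $y(t)^b \equiv 0$; differentiating at $t=0$ gives $b\, y^{b-1} v = 0$, so $v \in K^1_\eta$. At a smooth general point the tangent space realizes the dimension, hence
\[
\dim W + 1 \,=\, \dim \tilde{W} \,=\, \dim T_y \tilde{W} \,\leq\, \dim K^1_\eta.
\]
Now Proposition \ref{PROP:SameKer}(a) applied to $\eta \in R^{b-1} \setminus \{0\}$ gives $b-1 \geq (d-2) \dim K^1_\eta$, that is $\dim K^1_\eta \leq \frac{b-1}{d-2}$. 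Combining and using $b \leq a$ yields
\[
\dim W \,\leq\, \frac{b-1}{d-2} - 1 \,\leq\, \frac{a-1}{d-2} - 1,
\]
which is the claimed bound. The ``in particular'' statement is then the specialization $d=3$.

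The only subtle point, and the one I would be careful about, is the justification that $y^{b-1} \neq 0$ at a general point of $W$, because this is what makes the tangent-space computation nontrivial (and in particular makes $\eta$ available as a nonzero element to feed into Proposition \ref{PROP:SameKer}(a)). This is guaranteed precisely by choosing $b$ to be minimal such that $W \subseteq Y_b$: if $y^{b-1}$ vanished generically on $W$ we would get $W \subseteq Y_{b-1}$, contradicting minimality. Once this is in place, the rest is a routine tangent-space/dimension count, and no deeper input beyond Proposition \ref{PROP:SameKer}(a) is required.
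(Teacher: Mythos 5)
Your proof is correct and follows essentially the same route as the paper's: at a general point $y$ of (the affine cone over) a component of $Y_a$ where the relevant power $y^{b-1}$ is nonzero, the Zariski tangent space is shown to lie in $K^1_{y^{b-1}}$ by differentiating the vanishing of the $b$-th power, and Proposition \ref{PROP:SameKer}(a) then gives the bound. The only difference is bookkeeping in the degenerate case: you take, for each component $W$, the minimal $b$ with $W\subseteq Y_b$, whereas the paper reduces to the largest $\epsilon$ with $Y_a=Y_{a-\epsilon}$; your per-component choice is, if anything, slightly cleaner.
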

\begin{proof}
Take the general point
$[y]$ of any irreducible component $C$ of $Y_a$ of maximal dimension which is not contained in $Y_{a-1}$. If such a component does not exist, set $\epsilon>0$ to be the biggest integer such that $Y_a=Y_{a-\epsilon}$. The bound for $\dim(Y_{a-\epsilon})$ implies the one for the dimension of $Y_a$.

Let $\tilde{C}$ be the associated affine cone. We claim that $T_{\tilde{C},y}=K^1_{y^{a-1}}$. Indeed if $v$ is a tangent vector to $\tilde{C}$ in $y$, we have a curve $\gamma(t)=y+tv+t^2(\cdots)$ which is contained in $Y_a$. Then, by expanding the relation $\gamma(t)^a=0$, one has $vy^{a-1}=0$ so $v\in T_{\tilde{C},y}$ if and only if $v\in K^1_{y^{a-1}}$. Then, by Proposition \ref{PROP:SameKer}, we have
$$\dim(Y_a)=\dim(\tilde{C})-1=\dim(K^{1}_{y^{a-1}})-1\leq\frac{a-1}{d-2}-1$$
as claimed.
\end{proof}

As a consequence, we have a new proof of the following result of Migliore and Nagel (\cite[Proposition 4.3]{MN13}).

\begin{corollary}
\label{COR:WLP1}
Let $R=S/I$ be a standard Artinian Gorenstein algebra with $I$ generated by a regular sequence of polynomials of degree $e$ with $e\geq 2$. Then $R$ has the weak Lefschetz property in degree $1$.
\end{corollary}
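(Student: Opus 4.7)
The plan is to split on the degree $e$ and use Proposition \ref{PROP:SameKer}(a) as the main kernel bound, with the ker-coker machinery of Section \ref{SEC:2} entering only in the borderline case $e=2$. Applying that proposition with $\eta=L\in R^1$ and $h=1$ (so $d=e+1$) gives the uniform estimate $\dim K^1_L\le 1/(e-1)$ for every nonzero $L\in R^1$. When $e\ge 3$, this forces $K^1_L=0$, so $L\cdot:R^1\to R^2$ is injective for every nonzero $L$; since the generators of $I$ live in degree $\ge 3$ we have $R^2\simeq S^2$ and hence $\dim R^1=n+1\le\binom{n+2}{2}=\dim R^2$, so injectivity is already the maximal rank and $WLP_1$ follows.

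The nontrivial case is $e=2$, where the kernel bound only gives $\dim K^1_L\le 1$. I would first dispose of the low-dimensional situations $n=0$ and $n=1$, for which $N=(e-1)(n+1)\le 2$ and $WLP_1$ is elementary (the Gorenstein pairing $R^1\times R^1\to R^2$ is perfect, so for any nonzero $L$ there is $y$ with $Ly\ne 0$). For $n\ge 2$, one has $N=n+1\ge 3$, so the machinery of Section \ref{SEC:2} is available at $k=1\le N-2$. Arguing by contradiction, if no $L$ produced an injective multiplication, then the first projection $\pi_1:\Gamma_1\to\bP(R^1)$ would be surjective, i.e.\ condition $(\star)$ would hold at $k=1$, and Lemma \ref{LEM:UniqueGamma} would supply a unique irreducible component $\Gamma\subseteq\Gamma_1$ dominating $\bP(R^1)$ through $\pi_1$.

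The key step is then to combine the Ker-Coker principle with Corollary \ref{COR:DimYa}. Indeed, Proposition \ref{PROP:CONE} applied at $k=1$ (legitimate since $k+2=3\le N$) forces $y^2=0$ at every point $([x],[y])\in\Gamma$, whence $\pi_2(\Gamma)\subseteq Y_2$. Corollary \ref{COR:DimYa} then bounds $\dim Y_2\le 1/(e-1)-1=0$, while the fibers of $\pi_2$ are contained in $\bP(K^1_y)$ and so have dimension $\le\dim K^1_y-1\le 0$ by Proposition \ref{PROP:SameKer}(a). Therefore $\dim\Gamma\le 0$, contradicting $\dim\Gamma\ge\dim\bP(R^1)=n\ge 2$. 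Hence some $L$ yields an injective multiplication $R^1\to R^2$, and since $\dim R^2=\binom{n+1}{2}\ge n+1=\dim R^1$ for $n\ge 2$, this is the maximal rank. The only real obstacle is this $e=2$ regime: Proposition \ref{PROP:SameKer}(a) is not sharp enough on its own, and one must invoke the deformation argument behind Proposition \ref{PROP:CONE}, which demands $N\ge 3$ and forces the separate treatment of the small $n$ cases.
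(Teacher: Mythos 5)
Your proof is correct and follows essentially the same route as the paper: the case $e\ge 3$ is dispatched as trivial, and for $e=2$ one assumes $WLP_1$ fails, takes the unique component $\Gamma\subseteq\Gamma_1$ dominating $\bP(R^1)$ (condition $(\star)$ with $k=1$), applies the ker-coker principle to get $\pi_2(\Gamma)\subseteq Y_2$, and derives a contradiction from the bound $\dim Y_2\le 0$ of Corollary \ref{COR:DimYa}. The only (harmless) differences are that you close the contradiction by bounding the fibers of $\pi_2$ via Proposition \ref{PROP:SameKer}(a) instead of invoking $\dim Y\ge 1$ as in Proposition \ref{PROP:DIMY}, and that you explicitly treat $n\le 1$ (where $N<3$ and the Section \ref{SEC:2} machinery at $k=1$ is unavailable) -- a point the paper leaves implicit.
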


\begin{proof}
The result is clear if $e\geq 3$ since, in this case, $R^1=S^1$ and $R^2=S^2$. If $e=2$ one can consider the incidence variety $\Gamma_{1}=\{([x],[y])\in \bP(R^1)\times \bP(R^1)\,|\, xy=0\}$ introduced in Section \ref{SEC:2} and its projection $\pi_1$ on $\bP(R^1)$. By contradiction, assume that the weak Lefschetz property does not hold in degree $1$. This is equivalent to ask that $\pi_1$ is surjective. Proceeding as in Section \ref{SEC:2} one has that there exists a unique irreducible component $\Gamma$ of $\Gamma_1$ that dominates $\bP(R^1)$ via $\pi_1$. Moreover we have $Y=\pi_2(\Gamma)\subseteq Y_2$ and $\dim(Y)\geq 1$ (proceeding as in Proposition \ref{PROP:DIMY}) so $\dim(Y_2)\geq 1$. On the other hand, by Corollary \ref{COR:DimYa} we have $\dim(Y_2)\leq 0$, which gives a contradiction.
\end{proof}

%%%%%%%%%%%%%%%%%%%%%%%%%%%%%%%%%%%%%%%%%%%%%%%%%%%%%%%%%%%%%%%%%%%%%%%%%%%%%%%%%%%%%%%%%%%%%%%%%%%%%%%%%%%%%%%%%%%%%%%%%%%%%%%%%%%%%%%%%%%%%%%%%%%%%%%%%%%%%%%%%%%%%%%%%%%%%%%%%%%%%%%%%%%%%%%%%%%%%%%%%%%%%%%%%%%%%%%%%%%%%%%%%%%%%%%%%%%%%%%%%%%%%%%%%%%%%%%%%%%%%%%%%%%%%%%%%%%%%%%%%%%%%%%%%%%%%%%%%%%%%%%%%%%%%%%%%%%%%%%%%%%%%%%%%%

\subsection{Proof of Theorem C}
$\,$\\

\noindent In this subsection we will focus on the case $n=4$ and $d=3$, i.e. we will deal with standard Artinian Gorenstein algebras which are quotients of $S=\bK[x_0,\cdots,x_4]$ by ideals generated by regular sequences of length $5$ whose elements have degree $2$. We will prove that they always satisfy the strong Lefschetz property (in any degree). Under these assumptions we have $I=(I^2)$, $N=5$ and 
$$R=S/I=R^0\oplus R^1\oplus R^2\oplus R^3\oplus R^4\oplus R^5$$ with $(\dim(R^i))_{i=0}^5=(1,5,10,10,5,1)$. 
For simplicity, if $\alpha\in R^c$, we will define by $\mu_i(\alpha)$ to be the multiplication map by $\alpha$ from $R^i$ to $R^{i+c}$. In particular we have $K^i_{\alpha}=\ker(\mu_i(\alpha))$.

We will need the following technical result:

\begin{proposition} 
\label{PROP:techprop}
Let $[x]\in \bP(R^1) $ and $[q]\in \bP(R^2)$ such that $qx=0$. Let $W\subset K^2_q$ be a subspace with $\dim(W)\geq 4$. Then $W\cap (x\cdot R^1)\neq \{0\}$. 
\end{proposition}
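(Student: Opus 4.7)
The plan is to exhibit both $xR^1$ and $W$ as subspaces of a common ambient space $K^2_q$ whose dimensions sum to strictly more than $\dim K^2_q$, forcing a nontrivial intersection.

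The first step is to note that $xR^1 \subseteq K^2_q$: for every $v \in R^1$ we have $q(xv) = (qx)v = 0$, using $qx = 0$. Together with the hypothesis $W \subseteq K^2_q$, this reduces the claim to the purely numerical inequality
$$\dim W + \dim(xR^1) > \dim K^2_q.$$

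To bound $\dim K^2_q$ from above, I would apply Lemma \ref{LEM:RmodCOND} with $\alpha = q \in R^2$: the quotient $R_q = R/(0:q)$ is a standard Artinian Gorenstein algebra with socle in degree $N - 2 = 3$. Since $(R_q)^i = R^i / K^i_q$, Gorenstein duality in $R_q$ gives $\dim (R_q)^1 = \dim (R_q)^2$, i.e.\ $5 - \dim K^1_q = 10 - \dim K^2_q$, so
$$\dim K^2_q = 5 + \dim K^1_q.$$
Proposition \ref{PROP:SameKer}(a), specialized to $d = 3$, then gives $\dim K^1_q \leq 2$ (applied to $q \in R^2$) and $\dim K^1_x \leq 1$ (applied to $x \in R^1$). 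Hence $\dim K^2_q \leq 7$ and $\dim(xR^1) = 5 - \dim K^1_x \geq 4$.

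Combining with the hypothesis $\dim W \geq 4$ yields $\dim W + \dim(xR^1) \geq 8 > 7 \geq \dim K^2_q$, so the two subspaces of $K^2_q$ must meet nontrivially. The only delicate point is really the identity $\dim K^2_q = 5 + \dim K^1_q$, which follows cleanly from the Poincar\'e duality of $R_q$; the remainder is a dimension count. The resulting inequality $8 > 7$ is tight, and it is precisely the bounds $\dim K^1_q \leq 2$ and $\dim K^1_x \leq 1$ coming from Proposition \ref{PROP:SameKer}(a) that make it work: loosening either one would break the argument.
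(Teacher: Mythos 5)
Your proof is correct and follows essentially the same route as the paper's: both pass to the Gorenstein quotient $R_q = R/(0:q)$ via Lemma \ref{LEM:RmodCOND}, use its Poincar\'e duality together with Proposition \ref{PROP:SameKer}(a) to get $\dim K^2_q \leq 7$ and $\dim(x\cdot R^1)\geq 4$, and conclude by the dimension count $4+4 > 7$ inside $K^2_q$. Your write-up just makes the duality identity $\dim K^2_q = 5 + \dim K^1_q$ explicit where the paper states it implicitly.
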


\begin{proof} 
Consider the quotient $R_q=R/(0:q)$, i.e. the quotient of $R$ by the ideal $J$ such that $J^i=K^i_q$. This is a SAGA with socle in degree $N_q=N-\deg(q)=3$ by Lemma \ref{LEM:RmodCOND}. Since $xq=0$ by hypothesis, we have $K^1_q\neq 0$. By Proposition \ref{PROP:SameKer} we have $\dim(K^1_q)\leq 2$ and so $\dim(R_q^1)\in \{3,4\}$.
Since $\dim(K^2_q)=\dim(R^2)-\dim(R^2_q)=10-\dim(R^1_q)$
we have that $\dim(K^2_q)\in \{6,7\}$. In particular, $\dim(K^2_q)\leq 7$.
Consider $W\subseteq K^2_q$ of dimension $4$ and the subspace $V=x\cdot R^1$. By Proposition \ref{PROP:SameKer} we have that $V$ has dimension at least $5-1=4$ and, by construction, is a subspace of $K^2_q$. Then $W\cap V$ has dimension at least $1$ as claimed.
\end{proof}

We can now prove the main result of this section.

\begin{theorem}
Let $R$ be as above. Then $R$ satisfies the strong Lefschetz property, i.e. the general element $x\in R^1$ is such that
\begin{description}
\item [$SLP_1$] $\mu_1(x^3)=x^3\cdot :R^1\to R^4$
\item [$SLP_2$] $\mu_2(x)=x\cdot :R^2\to R^3$
\end{description}
are both isomorphisms.
\end{theorem}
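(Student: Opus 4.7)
My plan is to handle the two isomorphism claims separately, in both cases proceeding by contradiction and exploiting the incidence/ker-coker machinery of Section \ref{SEC:2} together with the quadric-specific tools just developed (Propositions \ref{PROP:SameKer}, \ref{PROP:techprop} and Corollary \ref{COR:DimYa}). Since $\dim R^1 = \dim R^4 = 5$ and $\dim R^2 = \dim R^3 = 10$, injectivity and bijectivity coincide for both maps, so in each case it suffices to assume the map has a nontrivial kernel for \emph{every} $x \in R^1$ and derive a contradiction.

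For the isomorphism $\mu_1(x^3) : R^1 \to R^4$, I would apply the Section \ref{SEC:2} setup verbatim with $k = N - 2 = 3$ and $n = 4$. Assumption $(\star)$ produces the incidence $\Gamma$, its image $Y = \pi_2(\Gamma)$, and the fibers $F_y$. Corollary \ref{COR:DIMY2} forces $1 \leq \dim Y \leq n - 2 = 2$, and the inclusion $Y \subseteq Y_4$ combined with Corollary \ref{COR:DimYa} (with $a = 4$, $d = 3$) gives $\dim Y \leq 2$ again. To conclude I need to rule out $\dim Y \in \{1,2\}$. If $\dim Y = 2$ and $F_y$ is a hypersurface, Proposition \ref{PROP:ALTERNATIVA}(b) pushes $\mathrm{Sec}(Y)$ into $Y_4$, and by $\dim Y_4 \leq 2$ this forces $\mathrm{Sec}(Y) = Y$, making $Y$ linear and contradicting Corollary \ref{COR:NOLINEAR}. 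The remaining subcases (either $\dim Y = 1$, or $\dim Y = 2$ with $\dim F_y = 2$) should be attacked by exploiting Proposition \ref{PROP:CONE}: for $(x,y) \in \Gamma$ the element $q := xy \in R^2$ satisfies $q^2 = x^2 y^2 = 0$, which places $q$ in the locus $\{[q] \in \bP(R^2) : q^2 = 0\}$ and, combined with Proposition \ref{PROP:techprop} applied to an appropriately chosen partner of $q$, should yield the final dimensional contradiction. Executing this last step cleanly is one of the two main obstacles.

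For $\mu_2(x) : R^2 \to R^3$, form the incidence $\Lambda = \{([x],[q]) \in \bP(R^1) \times \bP(R^2) : xq = 0\}$, whose first projection is surjective, and let $\Lambda_0$ be the unique irreducible component dominating $\bP(R^1)$ (argued as in Lemma \ref{LEM:UniqueGamma}). At a general $([x_0],[q_0]) \in \Lambda_0$ apply the ker-coker principle along $x_0 \mapsto x_0 + tv$: at first order there exists $w(v) \in R^2$ with $vq_0 + x_0 w(v) = 0$. Multiplying by $q_0$ and using $x_0 q_0 = 0$ yields $v q_0^2 = 0$ for every $v \in R^1$, hence $q_0^2 = 0$ in $R^4$ by Gorenstein duality. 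Dualising the inclusion $R^1 q_0 \subseteq x_0 R^2$ through the perfect pairing $R^2 \times R^3 \to R^5$ produces $K^2_{x_0} \subseteq K^2_{q_0}$, while the defining relation $x_0 q_0 = 0$ gives $x_0 R^1 \subseteq K^2_{q_0}$ of dimension at least $4$ (Proposition \ref{PROP:SameKer}). By Lemma \ref{LEM:RmodCOND} the quotient $R_{q_0} = R/(0{:}q_0)$ is Gorenstein of socle degree $3$, and its palindromic Hilbert function $(1,r,r,1)$ with $r \in \{3,4\}$ (again via Proposition \ref{PROP:SameKer}) forces $\dim K^2_{q_0} \leq 7$. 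Proposition \ref{PROP:techprop} now becomes the decisive tool: I plan to select an auxiliary $4$-dimensional subspace $W \subseteq K^2_{q_0}$ in sufficiently general position with respect to $x_0 R^1$ so that the forced nonzero element $x_0 r \in W \cap x_0 R^1$ satisfies $r \in K^1_{x_0^2}$ with the resulting structural constraints contradicting the Proposition \ref{PROP:SameKer} bound $\dim K^1_{x_0^2} \leq 2$. Performing this dimensional squeeze cleanly is the other principal technical obstacle.
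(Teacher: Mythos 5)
Your setup is faithful to the machinery needed (the incidence varieties, the ker--coker expansion giving $q_0^2=0$, the bounds from Proposition \ref{PROP:SameKer} and Corollary \ref{COR:DimYa}), but the two steps you explicitly flag as ``obstacles'' are not technicalities: they are the entire content of the proof, and the directions you propose for closing them do not work. The first structural problem is your decision to prove $SLP_1$ and $SLP_2$ independently. The paper's proof of $SLP_2$ is a proof of the implication $SLP_1 \Rightarrow SLP_2$: at a general $([x],[q])$ in the dominating component of $\Gamma^{1,2}$, it first shows $\dim Y \geq 3$ (because the fibers of $\pi_2$ have dimension at most $1$ by Proposition \ref{PROP:SameKer}), then chooses the $4$-dimensional space $W=\langle w_1,w_2,w_3,q\rangle$ inside the \emph{tangent space} $T_{\tilde{Y},q}\subseteq K^2_q$ --- not just anywhere in $K^2_q$ --- so that the element $x\eta\in W\cap(x\cdot R^1)$ furnished by Proposition \ref{PROP:techprop} is a tangent vector to $Y$ at $[q]$; this vector is lifted through the surjective differential $d_p\pi_2$, and the ker--coker expansion along the lift gives $x^2\eta+qv=0$, hence $x^3\eta=0$ after multiplying by $x$. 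That contradicts $SLP_1$ --- and nothing else: a single nonzero $\eta$ with $x^3\eta=0$ is perfectly consistent with Proposition \ref{PROP:SameKer}, which allows $\dim K^1_{x^3}\leq 3$. Your plan replaces this with a hoped-for contradiction against $\dim K^1_{x_0^2}\leq 2$, but the element produced by Proposition \ref{PROP:techprop} only satisfies $x_0\eta\in K^2_{q_0}$, which is vacuous ($x_0\cdot R^1\subseteq K^2_{q_0}$ holds automatically since $x_0q_0=0$); nothing in your sketch forces $\eta\in K^1_{x_0^2}$, and even one such element would not violate that bound. So the ``dimensional squeeze'' has no target unless $SLP_1$ is already available, i.e.\ unless you prove the two statements in the paper's order.

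The second gap is in your treatment of $SLP_1$ itself. Your reduction correctly leaves the cases $\dim Y=1$ and $\dim Y=2$ with small fibers (the case $\dim Y=2$ with $3$-dimensional fibers is indeed excluded, already by Proposition \ref{PROP:DIMY3}), but these are exactly the hard cases, and your proposed attack ($q=xy$, $q^2=0$, Proposition \ref{PROP:techprop} with ``an appropriately chosen partner'') contains no workable contradiction. The paper needs two genuinely different inputs here. For $\dim Y=2$ it passes through Macaulay's theorem and Lemma \ref{LEM:EQUIVHESS0} to realize $R=Q/\Ann_Q(G)$ with $\hess(G)\equiv 0$ and $V(G)\subseteq\bP^4$ not a cone, and then invokes the classification-type result \cite[Lemma 7.4.13]{Rus16} to exclude $\dim Y=2$ --- an external ingredient for which your outline offers no substitute. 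For $\dim Y=1$ it shows that the $4$-dimensional tangent space $T_{\tilde{F}_y,x}$ embeds in $K^1_{x^2y}$, which Proposition \ref{PROP:SameKer} bounds by $3$ when $x^2y\neq 0$, forcing $x^2y=0$ identically on $\Gamma$; repeating the same argument with $\Gamma_2$ forces $xy=0$, contradicting $WLP_1$ (Corollary \ref{COR:WLP1}). Neither the descent $\Gamma_3\Rightarrow\Gamma_2\Rightarrow\Gamma_1$ nor the appeal to Russo's lemma appears in your plan, and without them the cases $\dim Y\in\{1,2\}$ remain open.
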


\begin{proof}
The proof is organized in two steps: first of all we will prove that $SLP_1$ implies $SLP_2$ and then that $SLP_1$ holds.
\vspace{2mm}

{\bf Step 1:} $SLP_1 \Longrightarrow SLP_2$. We will proceed by contradiction by assuming that $SLP_2$ is false, i.e. that for all $x\in R^1$ we have $K^2_{x}\neq \{0\}$. We can consider the incidence variety 
$$\Gamma^{1,2}=\{([x],[q])\in \bP(R^1)\times \bP(R^2)\,|\, xq=0\}$$ 
and its projections $\pi_1$ and $\pi_2$. Since $SLP_2$ does not hold we have that $\pi_1$ is dominant.

If $[q]\in \bP(R^2)$, we have $\pi_2^{-1}([q])=\bP(K^1_q)\times [q]$ so its dimension is at most $1$ by Proposition \ref{PROP:SameKer}. As in Section \ref{SEC:2} one has that there exists a unique irreducible component $\Gamma$ of $\Gamma^{1,2}$ that dominates $\bP(R^1)$ via $\pi_1$. Let $Y$ be the image of $\Gamma$ via $\pi_2$.
Since $\pi_1$ is dominant we have that $\Gamma$ has dimension at least $4$. Then, for $[q]\in Y$ general,
$$\dim(Y)=\dim(\Gamma)-\dim(\Gamma\cap \pi_2^{-1}([q]))\geq 4-1=3.$$

We claim that $Y\subseteq Y_2^{(2)}:=\{[q]\in \bP(R^2)\,|\, q^2=0\}$. Let $([x],[q])$ be a generic point in $\Gamma$. Proceeding as in Proposition \ref{PROP:CONE}, since $\pi_1:\Gamma\to \bP(R^1)$ is dominant, for any $v\in R^1$, we can find $\beta(t)=q+tw+t^2(\cdots)\in Y\subset \bP(R^2)$ such that $(x+tv)\beta(t)=0$. Then, by considering the expansion of this relation modulo $t^2$ we obtain 
\begin{equation}
\label{EQ:KC1}
xw+qv=0 \mbox{ for all } v\in R^1.
\end{equation}
Then, by multiplying by $q$, one gets $q^2v=0$ for all $v\in R^1$. By Gorenstein duality we have $q^2=0$ so $Y\subseteq Y_2^{(2)}$ as claimed.

Since $p=([x],[q])$ was general we can also assume that $[q]$ is smooth for $Y$ and $Y^{(2)}_2$ and that the differential $d_p\pi_2:T_{\Gamma,p}\to T_{Y,q}$ is surjective. Then, as in Corollary \ref{COR:DimYa}, one can show that the Zarisky tangent space to the affine cone $\tilde{Y}_2^{(2)}$ of $Y_2^{(2)}$ at $q$ is $T_{\tilde{Y}_2^{(2)},q}\simeq K^2_{q}$. Since $\dim(Y)\geq 3$ and $Y\subseteq Y_2^{(2)}$ we can find three tangent vectors $w_1,w_2,w_3$ such that $W=\langle w_1,w_2,w_3,q\rangle$ is a $4$-dimensional subspace of $T_{\tilde{Y},q}\subseteq K^2_q$. Here $\tilde{Y}$ is the affine cone of $Y$. 
Then, by Proposition \ref{PROP:techprop}, we have $W\cap (x\cdot R^1)\neq \{0\}$ so we can find $\eta\in R^1\setminus \{0\}$ such that $x\eta\in W$. Notice that $x\eta$ cannot be equal to $q$ since, otherwise, we would have that for $[x]\in \bP(R^1)$ general $0=xq=x^2\eta$ and then $x^3\eta=0$: this is impossible since we are assuming $SLP_1$. Then $x\eta$ is not $0$ as tangent vector in $T_{Y,q}$. By the surjectivity of the differential map $d_p\pi_2$, there exists $v\in R^1$ such that 
$$(v,x\eta)\in T_{\Gamma,p}\subseteq T_{\bP(R^1),[x]}\times T_{\bP(R^2),[q]}$$ 
so there is a curve $(\alpha(t),\beta(t))\subseteq \Gamma$ passing through $p$ with tangent vector $(v,x\eta)$. By expanding at the first order one gets a relation as the one in Equation \eqref{EQ:KC1}: $x^2\eta+qv=0$. If we multiply by $x$ we have $x^3\eta=0$. However, this is only possible for $x$ special since we are assuming $SLP_1$ and so it leads to a contradiction.
\vspace{2mm}

{\bf Step 2}: $SLP_1$ holds. 
Assume, by contradiction, that $SLP_1$ does not hold. Then, as in Section \ref{SEC:2} we can construct $\Gamma_{N-2}=\Gamma_3=\{([x],[y])\in \bP(R^1)\times \bP(R^1)\,|\, x^3y=0\}$ which dominates $\bP(R^1)$ via its projection $\pi_1$. Let us consider $\Gamma\subseteq\Gamma_3$, the unique irreducible component that dominates $\bP(R^1)$ via $\pi_1$, and $Y=\pi_2(\Gamma)$. By Corollary \ref{COR:DIMY2} we have that $\dim(Y)\in \{1,2\}$.

By Macaulay's Theorem we have that $R\simeq Q/\Ann_Q(G)$ where $Q=\bK[y_0,\dots, y_4]$, with $y_i=\partial/\partial x_i$, $G\in S^N=S^5$ and $(\Ann_{Q}(G))_1=(0)$. Since $\dim(R^1)=5$, $R$ has codimension $5$ and, by assumption, $V(G)\subseteq \bP^4$ is not a cone. Moreover, by Lemma \ref{LEM:EQUIVHESS0}, $\hess(G)\equiv 0$. Then, by \cite[Lemma 7.4.13]{Rus16}, we have that $\dim(Y)$ cannot be $2$.
\vspace{2mm}

Assume then that $\dim(Y)=1$. By Proposition \ref{PROP:DIMY} we have that for all $[y]\in Y$, $F_y=\pi_2^{-1}([y])\cap \Gamma$ has dimension $3$. Let $p=([x],[y])$ be a general point of $\Gamma$. We may assume that $[x]$ is a smooth point of $F_y$. Then, if $\tilde{F}_y$ is the affine cone associated to $F_y$, we have $T_{\tilde{F}_y,x}\subseteq K^1_{x^2y}$. This can be proved by considering a curve in $\tilde{F}_y$ passing through $x$ with tangent $v$ as we have done before. 
\vspace{2mm}

We claim that $x^2y=0$. Assume by contradiction that $x^2y\neq 0$. Since $\tilde{F}_y$ has dimension $4$, we can find independent vectors $x,z_1,z_2,z_3 \in T_{\tilde{Y},y}$, such that $\langle x,z_1,z_2,z_3\rangle$ is a $4$-dimensional subspace of $K^1_{x^2y}$. On the other hand since $x^2y\neq 0$, by Proposition \ref{PROP:SameKer}, we have that $K^1_{x^2y}$ has dimension at most $3$. Hence, we get $x^2y=0$ as claimed. In particular, all the points of $\Gamma$ satisfy the relation $x^2y=0$: $\Gamma\subset \Gamma_2=\{([x],[y])\in \bP(R^1)\times \bP(R^1)\,|\, x^2y=0\}\subseteq \Gamma_3$. We have then that $\pi_1:\Gamma_2\to \bP(R^1)$ is dominant and $\Gamma$ is also an irreducible component of $\Gamma_2$ (which dominates $\bP(R^1)$ via $\pi_1$). Then $F_y\times [y]\subset \Gamma_2$ is the fiber over $[y]\in \bP(R^1)$ of $\pi_2:\Gamma\subseteq \Gamma_2\to \bP(R^1)$. Then we have that for $p=([x],[y])\in \Gamma$, $T_{\tilde{F}_y,x}\subseteq K^1_{xy}$. By proceeding as before one can prove that also the relation $xy=0$ holds for the points of $\Gamma$. Then, since the weak Lefschetz property holds in degree $1$ for $R$ by Corollary \ref{COR:WLP1}, $x$ is not general and we have a contradiction.
\end{proof}

We have the following important consequence:
\begin{corollary}
The Jacobian ring of a smooth cubic threefold has the strong Lefschetz property.
\end{corollary}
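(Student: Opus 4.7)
The plan is to reduce the statement to a direct application of Theorem C, proven just above. Let $X = V(F) \subset \bP^4$ be a smooth cubic threefold, with $F \in S = \bK[x_0, \ldots, x_4]$ a homogeneous polynomial of degree $3$. The Jacobian ring of $X$ is by definition $R = S/J_F$, where $J_F = (\partial F/\partial x_0, \ldots, \partial F/\partial x_4)$. Each generator $\partial F/\partial x_i$ has degree exactly $2$, so $J_F$ is an ideal generated by five quadrics in five variables; this is precisely the algebraic setup of Theorem C with $n = 4$ and $d = 3$.

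The next step is to verify that the five partial derivatives $\partial F/\partial x_0, \ldots, \partial F/\partial x_4$ form a regular sequence in $S$. Since $S$ is Cohen--Macaulay of dimension $5$, a homogeneous sequence of length $5$ is regular if and only if the ideal it generates has height $5$, equivalently if and only if its common vanishing locus in $\bA^5$ is the origin, equivalently if and only if $R$ is artinian. By the Jacobian criterion, the common projective zeros of the $\partial F/\partial x_i$ in $\bP^4$ form precisely the singular locus of $X$; the smoothness of $X$ means this locus is empty, and therefore the only common affine zero of the partials is the origin. Hence the five quadrics form a regular sequence.

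Once this is in place, the Jacobian ring $R$ satisfies the hypothesis of Theorem C verbatim, and that theorem immediately yields the strong Lefschetz property for $R$: for a general $L \in R^1$ both maps $L^3 \cdot : R^1 \to R^4$ and $L \cdot : R^2 \to R^3$ are isomorphisms, which is the definition of SLP for the SAGA $R$ of socle degree $N = (d-2)(n+1) = 5$. There is no genuine obstacle to overcome in the present corollary: the entire content of the statement lies in Theorem C, and the only thing to check here is that smoothness of the cubic threefold is exactly what forces the Jacobian ideal to be a complete intersection of quadrics, thereby placing the Jacobian ring inside the scope of that theorem.
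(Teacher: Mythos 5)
Your proof is correct and follows exactly the paper's (implicit) route: the corollary is stated there as an immediate consequence of Theorem C, with the fact that smoothness of $V(F)$ makes the partial derivatives a length-$5$ regular sequence of quadrics already recalled in Example \ref{EX:SAGAviaREGSEQ} and at the start of Section \ref{SEC:4}. Your only addition is to spell out the standard Cohen--Macaulay/Jacobian-criterion argument for that fact, which the paper takes for granted.
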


%%%%%%%%%%%%%%%%%%%%%%%%%%%%%%%%%%%%%%%%%%%%%%%%%%%%%%%%%%%%%%%%%%%%%%%%%%%%%%%%%%%%%%%%%%%%%%%%%%%%%%%%%%%%%%%%%%%%%%%%%%%%%%%%%%%%%%%%%%%%%%%%%%%%%%%%%%%%%%%%%%%%%%%%%%%%%%%%%%%%%%%%%%%%%%%%%%%%%%%%%%%%%%%%%%%%%%%%%%%%%%%%%%%%%%%%%%%%%%%%%%%%%%%%%%%%%%%%%%%%%%%%

\section{The Perazzo cubic}
\label{SEC:5}

In this section we briefly study, from the point of view of our setting, the Perazzo cubic $V(f)\subset \bP^4$ and, in particular, the standard Artinian Gorenstein algebra $R=Q/\Ann_Q(f)$. For this section we will set $\bK=\bC$.
\vspace{2mm}

The Perazzo cubic (introduced by Perazzo in \cite{Per}) is the cubic threefold $X=V(f)$ with 
$$f=x_0x_3^2+2x_1x_3x_4+x_2x_4^2\in S=\bK[x_0,x_1,x_2,x_3]$$
and it is the "simplest" counterexample to Hesse's conjecture: up to projective transformations, it is the only cubic threefold with vanishing hessian in $\bP^4$ which is not a cone. This follows from the work of several authors which obtain a classification of the hypersurfaces in $\bP^4$ with vanishing hessian that are not cones. A comprehensive treatment of this problem can be found in \cite[Chapter 7.4]{Rus16} whereas the original articles dealing with this classification problem (also in higher dimension) are \cite{GN,Per,Fra54,Per57,Per64,Los04,CRS08,GR09}. 
\vspace{2mm}

Fix the notations as in Example \ref{EX:SAGAviaANN} with $n=4$ and let $f$ be the above cubic form. Then
$$R=Q/\Ann_Q(f)=R^0\oplus R^1\oplus R^2\oplus R^3$$ 
is a SAGA with socle in degree $N=3$. As recalled in Section \ref{SEC:3}, since $X$ is not a cone and its hessian vanishes, $R$ has codimension $5$ and does not satisfy $SLP$ (and $WLP$ as well). 
\vspace{2mm}

One has 
$$(\Ann_Q(F))_2=\langle y_0^2,y_0y_1,y_0y_2,y_0y_4,y_1^2,y_1y_2,y_2^2,y_2y_3,y_0y_3-y_1y_4,y_1y_3-y_2y_4\rangle\simeq \bK^{10}$$
and that $\{y_0y_3^2,y_1y_3y_4,y_2y_4^2\}$ are the only monomials of degree $3$ which are not $0$ in $Q$. More precisely, using the above relations, one has $R^N=\langle \sigma\rangle$ where $\sigma=y_0y_3^2=y_1y_3y_4=y_2y_4^2$. 
From these relations one has that
$$B_1=\{b_i\}_{i=1}^5=\{y_0,y_1,y_2,y_3,y_4\}\quad \mbox{ and }\quad B_2=\{c_i\}_{i=1}^5=\{y_3^2,y_3y_4,y_4^2,y_0y_3,y_2y_4\}$$
are basis for $R^1$ and $R^2$ respectively. Moreover, it is easy to check that $b_i\cdot c_j=\delta_{ij}\sigma$ so that $B_2$ is the dual basis of $B_1$ (by choosing the isomorphism $\bK\to R^N$ such that $1\mapsto\sigma$). Denote by $\{w_i\}_{i=1}^5$ and by $\{z_i\}_{i=1}^5$ the coordinates induced by $B_1$ and $B_2$ on $R^1$ and $R^2$  respectively and by $\tau$ the involution $\tau([x],[y])=([y],[x])$. With these notations, we have
that $\Gamma_{N-2}=\Gamma_{1}=\{([x],[y])\,|\, xy=0\}$ has $3$ irreducible components, $\Gamma,\tau(\Gamma)$ and $\Lambda$, all of dimension $4$ (one can show that we always have at least $3$ components under these assumption). Using coordinates $w_{1i}$ and $w_{2i}$ on the two factors of $\bP(R^1)\times \bP(R^1)$, we have
$$\Gamma=V(w_{13}w_{20}+w_{14}w_{21},w_{13}w_{21}+w_{14}w_{22}, w_{21}^2-w_{20}w_{22},w_{23},w_{24})\, \mbox{ and }\, \Lambda=V(w_{13},w_{14},w_{23},w_{24})$$
so
$Y=V(w_1^2-w_0w_2,w_3,w_4)$ is a conic. In particular, for $[y]\in Y$ general, we have $\dim(F_y))=3$. The morphism $\varphi(x)=x^2$ can be written in coordinates as
$$\underline{z}=\tilde{\varphi}(\underline{w})=(w_3^2,2w_3w_4,w_4^2,2(w_0w_3+w_1w_4),2(w_1w3+w_2w_4))$$
so $Y_2=V(w_3,w_4)\simeq \bP^2$ is the plane containing the conic $Y$ - here we have taken the reduced structure - and $Z=V(4z_0z_2-z_1^2)$ is a cone over a conic with vertex the line $V(z_0,z_1,z_2)$. The polar map $\nabla_Z$ associate to $Z$ is
$$\nabla_Z: [\underline{z}]\mapsto [\underline{w}]=[4z_2:-2z_1:4z_0:0:0]$$
and has image $Y$. The Gordan-Noether map $\psi_g$ associated to $g=4z_0z_2-z_1^2$ can be written in coordinates as
$$\psi_g(\underline{w})=[2w_4^2:-2w_3w_4:2w_4^2:0:0],$$
it is defined outside $Y_2$ and it defines a rational map from $\bP^4$ to $Y$, as observed in Remark \ref{REM:ALPHAandGN}. Finally, one can check that 
$\Gamma\cap \tau(\Gamma)=Y\times Y$ (this is, again, something that holds more generally), and
$$\Lambda=Y_2\times Y_2\qquad \Gamma\cap \Lambda=Y_2\times Y\qquad \tau(\Gamma)\cap \Lambda=Y\times Y_2$$
so $\Gamma\cap \tau(\Gamma)\cap \Lambda = Y\times Y$.

\end{document}